%
%
%
%
%
\RequirePackage{fix-cm}
\documentclass[smallextended]{svjour3}       
\smartqed  
\usepackage{graphicx}
%
%
%
%
%

\usepackage{etex,environ,etoolbox}
\usepackage{algorithm}
\usepackage{algorithmic}
\usepackage[english]{babel}
\usepackage[utf8]{inputenc}  
\usepackage[T1]{fontenc} 
\usepackage{amsmath}

\usepackage{amsthm}

\usepackage{pgfplots}
\usepackage{graphicx}
\usepackage{multicol}
\usepackage{geometry}

\usepackage{tikz}
\usepackage{booktabs}
\usepackage{algorithm}

\usepackage{array}
\usepackage{bm}
\usepackage{dsfont}
\usepackage{amssymb}
\usepackage{listings}

\newcommand{\troppoly}[1]{#1}
\DeclareMathOperator*{\argmax}{\arg\,\max}
\DeclareMathOperator*{\argmin}{\arg\,\min}
\newcommand{\R}{\mathbb{R}} 
\newcommand{\Rmax}{\mathbb{R}_{\max}}

\newcommand{\Z}{\mathbb{Z}}         

\newcommand{\N}{\mathbb{N}} 
\newcommand{\Rinfty}{\R\cup\{-\infty\}}

\newcommand{\dom}{\operatorname{dom}}

\newcommand{\TP}{\mathbb{T} \mathbb{P}}
\newcommand{\tplus}{\oplus}  

\newcommand{\tdot}{\odot}

\newcommand{\cro}[1]{\left[ #1 \right]}
\makeatletter
\newcommand{\raisemath}[1]{\mathpalette{\raisem@th{#1}}}
\newcommand{\raisem@th}[3]{\raisebox{#1}{$#2#3$}}
\makeatother

\usepackage{proofsatend}

\newtoggle{moveproofsatend}
\toggletrue{moveproofsatend}
\togglefalse{moveproofsatend}

\newtoggle{shortversion}
\newtoggle{veryshortversion}
\togglefalse{shortversion}
\togglefalse{moveproofsatend}

\iftoggle{moveproofsatend}{

}{

}

\begin{document}

\title{A bilevel optimization model for load balancing in mobile networks through price incentives}


\author{Marianne Akian         \and
        Mustapha Bouhtou  \and
        Jean Bernard Eytard \and
        St\'ephane Gaubert
}


\institute{M. Akian \at
              INRIA, CMAP, Ecole Polytechnique, CNRS \\
              Route de Saclay \\
              91128 Palaiseau Cedex, France \\
              Tel.: +331 69 33 46 39\\
              \email{marianne.akian@inria.fr}          
           \and
           M. Bouhtou \at
              Orange Labs \\
              44, avenue de la R\'epublique \\
              92320 Chatillon, France \\
              \email{mustapha.bouhtou@orange.com}
           \and
           J.B. Eytard \at
              INRIA, CMAP, Ecole Polytechnique, CNRS \\
              Route de Saclay \\
              91128 Palaiseau Cedex, France \\
              \email{jean-bernard.eytard@inria.fr} 
           \and
           S. Gaubert \at
              INRIA, CMAP, Ecole Polytechnique, CNRS \\
              Route de Saclay \\
              91128 Palaiseau Cedex, France \\
              Tel.: +331 69 33 46 03\\
              \email{stephane.gaubert@inria.fr} \\            
           \vfill
           \textit{Preprint submitted to EURO Journal on Computational Optimization \hfill October 31, 2018}        
}

\date{}

\maketitle

\begin{abstract}
We propose a model of incentives for data pricing in large mobile networks,
in which an operator wishes to balance the number of connections (active users) of
different classes of users
in the different cells and at different time instants, in order to
ensure them a sufficient quality of service.
We assume that each user has a given total demand per day for different
types of applications,
which he may assign to
different time slots and locations, depending on his own
mobility, on his preferences and on price discounts proposed by the
operator.
We show that this can be cast as a bilevel programming problem with a
special
structure allowing us to develop a polynomial time decomposition
algorithm suitable for large networks.
First, we determine the optimal number of connections (which maximizes a
measure of balance); next, we solve an inverse problem and determine the
prices generating this traffic. Our results exploit a recently developed
application
of tropical geometry methods to mixed auction problems, as well as
algorithms in discrete convexity (minimization of discrete convex
functions in the sense of Murota).
We finally present an application on real data provided by Orange
and we show the efficiency of the model to reduce the peaks of congestion.
\keywords{Bilevel programming \and Mobile data networks \and Tropical geometry \and Discrete convexity \and Graph algorithms}
\end{abstract}

\section{Introduction}
With the development of new mobile data technologies (3G, 4G), the demand for using the Internet with mobile phones has increased rapidly. Mobile service providers (MSP) have to confront congestion problems in order to guarantee a sufficient quality of service (QoS). 

Several approaches have been developed to improve the quality of service, coming from different fields of the telecommunication engineering and economics. For instance, one can refer to Bonald and Feuillet \cite{bonald2013network} for some models of performance analysis to optimize the network in order to improve the QoS. 
One of the promising alternatives to solve such problems consists in using efficient pricing schemes in order to encourage customers to shift their mobile data consumption. In \cite{maille2006pricing}, Maill\'e and Tuffin describe a mechanism of auctions based on game-theoretic methods for pricing an Internet network, see also \cite{maille2014telecommunication}. In \cite{altman2006pricing}, Altman et al.\ study how to price different services by using a noncooperative game. These different approaches are based on congestion games. In the present work, we are interested in how a MSP can improve the QoS by balancing the traffic in the network. We wish to determine in which locations, and at which time instants, it is relevant to propose price incentives, 
and to evaluate the influence of these incentives on the quality of service.

This kind of problem belongs to smart data pricing. We refer the reader to the survey of Sen et al.~\cite{sen2013survey} and also to the collection of articles \cite{sen2014smart}. Finding efficient pricing schemes is a revenue management issue. The first approach consists in usage-based pricing; the prices are fixed monthly by analysing the use of the former months. It is possible to improve this scheme by identifying peak hours and non-peak hours and proposing incentives in non-peak hours in order to decrease the demand at peak hours and to better use the network capacity at non-peak hours. 
This leads to time-dependent pricing. Such a scheme for mobile data is developed by Ha et al.\ in \cite{ha2012tube}. The prices are determined at different time slots and based on the usage of the previous day in order to maximize the utility of the customers and the revenue of the MSP. 
This pricing scheme was concretely implemented by AT\&T, showing the relevance of such a model. 
In another approach, Tadrous et al.\ propose a model in which the MSP anticipates peak hours and determines incentives for proactive downloads \cite{tadrous2013pricing}. 

The latter models concern only the time aspects. One must also take into account the spatial aspect in order to optimize the demand between the different locations. In \cite{ma2014time}, Ma, Liu and Huang present a model depending on time and location of the customers where the MSP proposes prices and optimizes his profit taking into account the utility of the customers. 

Here, we assume (as in~\cite{ma2014time}) that the MSP proposes incentives at different time and places. Then, customers optimize their data consumption by knowing these incentives and the MSP optimizes a measure of the QoS. In this way, we introduce a bilevel model in which the provider proposes incentives in order to balance the traffic in the network and to avoid as much as possible the congestion (high level problem), and customers optimize their own consumption for the given incentives (low level problem). 


Bilevel programs have been widely studied, see the surveys of Colson, Marcotte and Savard \cite{colson2007overview} and of Dempe \cite{dempe2003bilevel}. They represent an important class of pricing problems in the sense that they model a leader wanting to maximize his profit and proposing prices to some followers who maximize themselves their own utility. Most classes of bilevel programs are known to be NP-hard. Several methods have been introduced to solve such problems. For instance, if the low level program is convex, 
it can be replaced by its Karush-Kuhn-Tucker optimality conditions and the bilevel problem becomes a classical one-stage optimization problem, which is however generally non convex. If some variables are binary or discrete, and the objective function is linear, the global bilevel problem can be rewritten as a mixed integer program, as in Brotcorne et al.~\cite{brotcorne2000bilevel}. 

In the present work, we optimize the consumption of each customer in a large area (large urban agglomeration) during typically one day divided in time slots of one hour, taking into account the different types of customers and of applications that they use. 
Therefore, we have to confront both with the difficulties
inherent to bilevel programming and with the large number of variables (around $10^7$).
Hence, we need to find polynomial time algorithms, or fast
approximate methods, for classes of problems of a very large scale, which,
if treated directly, would lead to mixed integer linear or nonlinear programming formulations beyond the capacities of current off-the-shelve solvers.

This motivated us to introduce a different approach, based on tropical geometry. 
Tropical geometry methods have been recently applied by Baldwin and Klemperer in \cite{baldwin2012tropical} to an auction problem.
This has been 
further developed by Yu and Tran \cite{tran2015product}.
In these approaches,
the response of an agent to a price is represented by a certain
polyhedral complex (arrangement of tropical hypersurfaces). This
approach is intuitive since it allows one to
vizualize geometrically the behavior of the agents: each cell
of the complex corresponds to the set of incentives leading
to a given response. Then, we vizualize the collective response of
a group of customers by ``superposing'' (refining) the polyhedral
complexes attached to every customer in this group. 
We apply here this idea
to represent the response of the low-level optimizers in a bilevel
problem. This leads to the following decomposition method: first we compute,
among all the admissible consumptions of the customers, the one which 
maximizes a measure of balance of the network; then, we determine
the price incentive which achieves this consumption. 
In this way, a bilevel
problem is reduced to the minimization of a convex function over a certain Minkowski sum of sets. We identify situations in which the latter problem can be solved in polynomial time, by exploiting the discrete convexity results
developed by Murota~\cite{murota2003discrete}. 
In this approach, a critical step is to check the membership of a vector
to a certain Minkowski sum of sets of integer points of polytopes. 
In our present model, these polytopes, which represent the possible consumptions of one customer, have a
remarkable combinatorial structure
(they are hypersimplices). Exploiting this combinatorial structure,
we show that this critical
step can be performed quickly, by reduction
to a shortest path problem in a graph. This leads
to an exact solution method when there is only one type
of contract and one type of application sensitive to price
incentive, and to a fast approximate method in the general case.

We finally present the application of this model on real data from Orange and show how price incentives can improve the QoS by balancing the number of active customers in an urban agglomeration during one day.
These results indicate that a price incentive mechanism can effectively
improve the satisfaction of the users by displacing their consumption
from the most loaded regions of the space-time domain to less loaded
regions. 



The paper is organized as follows. 
In Section~\ref{sec-model}, we present the bilevel model. In Section~\ref{sec-polytime}, we explain how a certain polyhedral complex can be used to represent the user's responses, and we describe the decomposition method.
In Section~\ref{sec-first}, we deal with the high level problem and identify special cases which are solvable in polynomial time.
In Section~\ref{subsec2-4}, we develop accelerated algorithms which enable to solve bilevel problems with a large number of customers.
In Section~\ref{sec-general}, we propose a general relaxation method. 
The application to the instance provided by Orange is presented in Section~\ref{sec-exp}.

The first results of this article (without proofs) were published in the proceedings of the conference WiOpt 2017~\cite{eytard2017bilevel}.  

\section{A bilevel model} \label{sec-model}


We consider a time horizon of one day, divided in $T$ time slots numbered $t \in \cro{T}= \lbrace 1, \dots T \rbrace$, and a network divided in $L$ different cells numbered $l \in \cro{L}$.  We assume that $K$ customers, numbered $k \in \cro{K}$, are in the network. The customers have different types of contracts $b \in \cro{B}$ and they make requests for different types of applications $a \in \cro{A}$ (web/mail, streaming, download, \dots). We denote by $\mathcal{K}^{b}$ the set of customers with the contract $b$. A given customer $k \in \mathcal{K}^{b}$ is characterized by the following data.
We denote by $L_{t}^{k} \in \left[ L \right] $ the position of the customer $k$ at each time $t\in \cro{T}$, so that the sequence $(L^{k}_1,\ldots, L^{k}_T)$ represents the trajectory of this customer. We assume that this trajectory is deterministic, so we consider customers with a regular daily mobility (for example, the trip between home and work).
We denote by $\rho_{k}^{a}(t)$ the inclination of a customer $k$ to make a request for an application of type $a$ at time $t \in \cro{T}$.
We suppose that customer $k$ wishes to make a fixed number of requests $R_{k}^{a}\leq T$ using the application $a$ during the day. 
We consider a set of time slots $\mathcal{I}_{k}^{a} \subset \cro{T}$  in which the customer $k$ decides not to consume the application $a$.

We denote by $u_{k}^{a}(t)$ the consumption of the customer $k$ for the application $a$ at time $t$, 
setting $u_{k}^{a}(t)=1$ if $k$ is active at time $t$ and makes a request of type $a$ and $u_{k}^{a}(t)=0$ otherwise. 
Therefore, the number $N^{a,b}(t,l)$ of active customers with contract $b$ for the application $a$ at time $t$ and location $l$ is given by
\(
N^{a,b}(t,l)=\sum_{k \in \mathcal{K}^{b}}{u_{k}^{a}(t)\mathds{1}(L_{t}^{k}=l)}
\),
where $\mathds{1}$ denotes the indicator function, and the total number of active customers $N(t,l)$ at time $t$ and location $l$ is given by $N(t,l)=\sum_{a}{\sum_{b}{N^{a,b}(t,l)}}$.

We consider the following two-stage model of price incentives. The first stage consists for the operator in announcing a discount $y^{a,b}(t,l)$ at time $t$ and location $l$ for the customers of contract $b$ making requests of type $a$. We consider only nonnegative discounts, so $y^{a,b}(t,l) \geq 0$. The second stage models the behavior of customers who modify their consumption by taking the discounts into account. We will assume the preference of a customer $k$ of contract $b$ for consuming at time $t$ becomes $\rho_{k}^{a}(t)+\alpha_{k}^{a} y^{a,b}(t,L_{t}^{k})$, where $\alpha_{k}^{a}$ denotes the sensitivity of customer $k$ to price incentives for the application $a$.
It corresponds to classical linear utility functions, see e.g. \cite{baldwin2012tropical}. 
We also assume that the customers cannot make more than one request at each time, that is $\forall t \in \cro{T}$, $\sum_{a}{u_{k}^{a}(t)} \leq 1$. Therefore, each customer $k$ determines his consumptions $u_{k}^{a}=(u_k^{a}(t))_{t \in \cro{T}}\in \{0,1\}^T$ for the applications, as an optimal solution of the linear program:
\begin{problem}[Low-level, customers]\label{lowlevel}
\begin{equation} 
\max_{u_{k}^{a}\in \{0,1\}^T}  \sum_{a \in \cro{A}}{\sum_{t=1}^{T}{\left[ \rho_{k}^{a}(t)+\alpha_{k}^{a} y^{a,b}(t,L_{t}^{k}) \right] u_{k}^{a}(t)}} 
\end{equation}
\begin{align*}
& \text{s.t. } \; \forall a \in \cro{A}, \sum_{t=1}^{T}{u_{k}^{a}(t)}=R_{k}^{a},  \quad \forall t \in \cro{T}, \sum_{a \in \cro{A}}{u_{k}^{a}(t)} \leq 1 \enspace \\
& \forall t \in \mathcal{I}_{k}^{a}, \forall a \in \cro{A}, u_{k}^{a}(t)=0
\enspace .
\end{align*}
\end{problem}
 
Consequently, each price $y^{a,b}=(y^{a,b}(t,l))_{t\in \cro{T},\; l\in\cro{L}}$ determines the possible individual consumptions $u_{k}^{a}$ for the users with contract $b$, and so the possible cumulated traffic vectors $N^{a,b}=(N^{a,b}(t,l))_{t\in \cro{T},\; l\in\cro{L}}$ and $N=\sum_{a}{\sum_{b}{N^{a,b}}}$.  The aim of the operator is, through price incentives, to balance the load in the network into the different locations and time slots to improve the quality of service perceived by each customer. We introduce a coefficient $\gamma_{b}$ relative to the kind of contracts of the different customers in order to favor some classes of premium customers. 
In \cite{lee2005non}, Lee et al.\ suppose that the satisfaction of a customer depends on his perceived throughput, which can be considered as inversely proportional to the number of customers in the cell.
Here, we assume that the satisfaction of each customer $k$ in the cell $l \in \cro{L}$ is a nonincreasing function $s_{l}^{a,b}$ of the total number of active customers in the cell $N(t,l)$, depending on the characteristics of the cell, of the type of application the user wants to do (some applications like streaming need a higher rate than others) and on the type of contract. 
We also assume that the satisfaction of all the customers with contract $b$ using
a given application $a$ in a given cell
is maximal until the number of active customers
reaches a certain threshold $N_l^{a,b}$, then $s_{l}^{a,b}(N(t,l))=1$ for $N(t,l)\leq N_l^{a,b}$. After this threshold, the satisfaction decreases until a critical value $N_{l}^{C}$. We add the constraint $\forall t\in \cro{T},\; \forall l \in \cro{L}, \; N(t,l) \leq N_{l}^{C}$ to prevent the congestion. For non-real time services like web, mail, download, the satisfaction function can be viewed as a concave function of the throughput, like $1-e ^{-\delta/\delta_{c}}$ where $\delta$ denotes the throughput, see Moety et al.~\cite{moety2016satisfaction}. Hence, we will consider that for contents like web, mail and download, $N_l^{a,b}=N_l^1$, $s_{l}^{a,b}(n)=1$ for $n \leq N_{l}^{1}$ and $s_{l}^{a,b}(n)=1-\lambda_{b} \exp\left(- \frac{2N_{l}^{C}}{n-N_{l}^{1}}\right)$ for $N_{l}^{1} \leq n \leq N_{l}^{C}$ where $\lambda_{b}$ is a positive parameter depending on the kind of contract of the customer. The more expensive the contract of the customer is, the larger is $\lambda_{b}$. We can prove that this function is concave for $0 \leq n \leq  N_{l}^{C}$. For real time services like video streaming, the customers need a more important throughput to ensure a good QoS \cite{lee2005non}. We will here consider the same type of functions $s_{l}^{a,b}$ but with $N_l^1$ replaced by $N_l^{a,b}=0$, that is $s_{l}^{a,b}(n)=1-\lambda_{b} \exp\left(- \frac{2N_{l}^{C}}{n}\right)$ for $0 < n \leq N_{l}^{C}$.

\begin{figure}[h]
\begin{center}
\begin{tikzpicture} [scale=2]
	\draw [red] (0,1.5)--(1.6,1.5);
	\draw (1.6,0)--(1.6,1.5);
    \draw [blue, domain=0.01:5,variable=\x] plot({\x},{1.5*(1-exp(2-10/\x))});
    \draw [blue,dashed, domain=0.01:5,variable=\x] plot({\x},{1.5*(1-2/3*exp(2-10/\x))});
    \draw [red, domain=1.61:5,variable=\x] plot({\x},{1.5*(1-exp(2-10/(\x-1.6)))});
    \draw [red,dashed, domain=1.61:5,variable=\x] plot({\x},{1.5*(1-2/3*exp(2-10/(\x-1.6)))});
    \draw [->](0,0)--(5,0) node[right]{$N(t,l)$};
    \draw [->](0,0)--(0,2) node[left]{$s_{l}^{a,b}$};
    \draw (0,1.5) node[left]{1};
    \draw (0,0) node[left]{0};
    \draw (1.6,0) node[below]{$N_{l}^{1}$};
    \draw (5,0) node[below]{$N_{l}^{C}$};
\end{tikzpicture}
\caption{Different kind of satisfaction functions of the number of active customers in a cell. The blue ones are those for streaming contents whereas the red ones are those for web, mail and download contents. The dashed ones corresponds to the satisfaction of standard customers, the continuous ones to the satisfaction of premium customers.}
\label{satisfaction}
\end{center}
\end{figure}
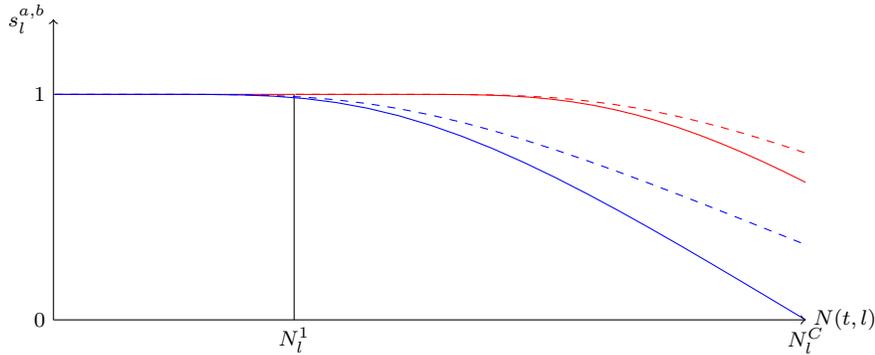

%
%

So, the first stage consists in maximizing the global satisfaction function $s$ which depends on the vectors $N^{a,b}\in \N^{T\times L}$ and is defined by:

\begin{align*}
&s(N^{a,b}) = \sum_{t=1}^{T}\sum_{a \in \cro{A}}\sum_{b \in \cro{B}}\sum_{k \in \mathcal{K}^{b}} \gamma_{b}s_{L_{t}^{k}}^{a,b}(N(t,L_{t}^{k}))u_{k}^{a}(t) \\
& =\sum_{t=1}^{T}\sum_{a \in \cro{A}}\sum_{b \in \cro{B}}\sum_{k \in \mathcal{K}^{b}}\sum_{l=1}^{L} \gamma_{b}s_{l}^{a,b}(N(t,l))\mathds{1}(L_{t}^{k}=l)u_{k}^{a}(t) \\
& = \sum_{t=1}^{T}\sum_{l=1}^{L}\sum_{a \in \cro{A}}\sum_{b \in \cro{B}} \gamma_{b}N^{a,b}(t,l) s_{l}^{a,b}(N(t,l))
\end{align*} 
with $\forall b \in \cro{B}, \gamma_{b}>0$. Our final model consists in solving the following bilevel program: 
\begin{problem}[High-level, provider] \label{highlevel}
\begin{equation}
 \max_{y^{a,b} \in \R_{+}^{T \times L}}  \sum_{t=1}^{T}{\sum_{l=1}^{L}{\sum_{a \in \cro{A}}{\sum_{b \in \cro{B}}{\gamma_{b} N^{a,b}(t,l) s_{l}^{a,b}(N(t,l))}}}} 
\end{equation}
where  $\forall t \in \cro{T},\; l \in \cro{L}, \; N(t,l)=\sum_{a=1}^{A}{\sum_{b=1}^{B}{N^{a,b}(t,l)}}$, and $N(t,l) \leq N_{l}^{C}$, $\forall t \in \cro{T},\; l \in\cro{L},\; a\in\cro{A},\; b\in \cro{B}, \; N^{a,b}(t,l)=\sum_{k\in \mathcal{K}^{b}}{u_{k}^{a}(t)\mathds{1}(L_{t}^{k}=l)}$, and $\forall k \in \cro{K}$, the vectors $u_{k}^{a}$ are solutions of Problem~\ref{lowlevel}.
\end{problem}

\section{A decomposition approach for solving the first model} \label{sec-polytime}

We will present a decomposition method for solving the previous bilevel problem. 
In this section, and in the next two ones, we suppose that there is only one kind of application and one kind of contract. This special case is already relevant in applications: it covers the case when, for instance, only the download requests are influenced by price incentives, whereas other requests like streaming or web are fixed. Whereas the analytical results of the present section carry over to the general model, the results of the next two sections (polynomial time solvability) are only valid under these restrictive assumptions.
We shall return to the general case in Section~\ref{sec-general}, developing a fast approximate algorithm for the general model based on the present principles.

In the above special case, the bilevel model can be rewritten:
\begin{equation*} \label{highlevel2}
 \max_{y \in \R_{+}^{T \times L}}  \sum_{t=1}^{T}\sum_{l=1}^{L} N(t,l) s_{l}(N(t,l)) 
\end{equation*}
where $\forall t, l \; N(t,l) \leq N_l^C$ and $N(t,l)=\sum_{k\in \cro{K}}{u_k^*(t)\mathds{1}(L_{t}^{k}=l)}$, and for each $k \in \cro{K}$ the vectors $u_k^*$ are solutions of the problem:
\begin{equation*} \label{lowlevel2}
\max_{u_{k}\in \{0,1\}^T}  \sum_{t=1}^{T}{\left[ \rho_{k}(t)+\alpha_{k} y(t,L_{t}^{k}) \right] u_{k}(t)}
\end{equation*}
\[
\text{s.t. } \sum_{t=1}^{T}{u_{k}(t)}=R_{k}, \quad \enspace 
\forall t \in \mathcal{I}_{k}, u_{k}(t)=0, \quad  \enspace 
\]

In order to deal more abstractly with the bilevel model, we introduce the notation $u_{k}(t,l)=u_{k}(t)\mathds{1}(L_{t}^{k}=l)$.
Hence, we have $u_{k}(t,l)=0$ if $L_{t}^{k} \neq l$. By defining the set $\mathcal{J}_{k}=\{ (t,l) \; \mid \; t \in \mathcal{I}_{k}$ or $L_{t}^{k} \neq l \}$,
we have that $(t,l) \in \mathcal{J}_k$ implies that $u_k(t,l)=0$. 
We can then define 
 $\rho_k(t,l)=\rho_k(t)/ \alpha_{k}$ if $(t,l) \notin \mathcal{J}_k$ and $\rho_k(t,l)=-\infty$ otherwise. 
Then, we can rewrite each low-level problem as:
\begin{equation*} \label{lowlevel2bis}
\max_{u_k \in F_k} \sum_{t,l} \left[ \rho_k(t,l) + y(t,l) \right] u_k(t,l)
\end{equation*}
where $F_k=\lbrace u \in \{ 0,1 \}^{T\times L} \mid \sum_{t,l} u(t,l) =R_k \; \text{and}  \; \forall (t,l) \in \mathcal{J}_k, u(t,l)=0 \rbrace$, 
and the global bilevel problem becomes:
\begin{equation*} \label{highlevel2bis}
\max_{y \in \R_{+}^{T \times L}} \sum_{t,l} f_{l}(N(t,l)) \quad 
\text{s.t.} \;  \forall (t,l)\quad N(t,l) \leq N_l^C, \quad N(t,l)=\sum_{k=1}^{K}{u_{k}(t,l)}  
\end{equation*}
with $f_l : x \in \R_+ \mapsto x s_l(x)$.
Notice that the set $\mathcal{J}_k$ corresponds to the set of couples $(t,l)$ such that $\rho_k(t,l)=-\infty$. 
It is possible to enumerate all the couples $(t,l) \in \cro{T} \times \cro{L}$.
Let us define $n= T \times L$ and associate each couple $(t,l)$ to an integer $i \in \cro{n}$. 
The quantities $\rho_k(t,l)$, $u_k(t,l)$, $N(t,l)$ and $y(t,l)$ can be respectively denoted by $\rho_k(i)$, $u_k(i)$, $N_i$ and $y_i$.
The function $f_l$ and the integer $N_l^C$ can be respectively denoted by $f_i$ and $N_i^C$. 
It means that for two indices $i$ and $j$ associated to two couples $(t,l)$ and $(t',l)$ with the same $l$, we have $f_i=f_j := f_l$ and $N_i^C=N_j^C := N_l^C$.
The low-level problem can be rewritten:
\begin{problem}[Abstract low-level problem] \label{lowlevel3}
\begin{equation} 
\max_{\scriptstyle u_{k}\in F_{k}}  \sum_{i=1}^n \left[ \rho_{k}(i)+ y_i \right] u_{k}(i)
\end{equation}
where $F_{k}=\lbrace u \in \lbrace 0,1 \rbrace ^n | \sum_{i=1}^n u(i)=R_{k} $ and $\forall i \in \mathcal{J}_{k}, u(i)=0 \rbrace$. 
\end{problem}
The global bilevel problem is:
\begin{problem}[Bilevel problem] \label{highlevel3}
\begin{equation} 
\max_{y \in \R_+^n} \sum_{i=1}^n f_i(N_i) \quad 
\text{s.t.} \;  \forall i, N_i \leq N_i^C, \quad N_i=\sum_{k=1}^{K}{u_k^*(i)}   
\end{equation}
with for all $k \in \cro{K}$, $u_k^*$ solution of Problem \ref{lowlevel3}.
\end{problem}

\begin{lemma}
Suppose that the functions $s_{i}$ are nonincreasing and concave on $\left[ 0, N_{i}^{C} \right]$. Then, the functions $f_{i}$ are also concave on $\left[ 0, N_{i}^{C} \right]$.
\end{lemma}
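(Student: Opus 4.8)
The plan is to reduce to a single index, writing $f(x)=x\,s(x)$ with $s:=s_i$ nonincreasing and concave on $[0,N^C]$ (here $N^C:=N_i^C$), and to prove that $f$ is concave there. The guiding intuition comes from the smooth case: if $s$ were twice differentiable, then $f''(x)=2s'(x)+x\,s''(x)$, and since $s'\le 0$ (monotonicity), $s''\le 0$ (concavity) and $x\ge 0$ on $[0,N^C]$, every term is nonpositive, so $f''\le 0$. This already pinpoints where each hypothesis is used, and in particular shows that the nonnegativity of the argument $x$ is essential. However, the statement only assumes $s$ concave and nonincreasing, so I would not rely on second derivatives.

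To obtain a proof valid for any concave $s$, I would establish midpoint concavity directly and then upgrade it using continuity. Fix $a,b\in[0,N^C]$ with $a\le b$ and set $m=(a+b)/2$. Concavity of $s$ gives $s(m)\ge \tfrac12\bigl(s(a)+s(b)\bigr)$, hence $2f(m)=(a+b)\,s(m)\ge \tfrac12(a+b)\bigl(s(a)+s(b)\bigr)$ because $a+b\ge 0$. The key algebraic step is the identity $\tfrac12(a+b)\bigl(s(a)+s(b)\bigr)-\bigl(a\,s(a)+b\,s(b)\bigr)=\tfrac12\,(b-a)\bigl(s(a)-s(b)\bigr)$, whose right-hand side is nonnegative: $b-a\ge 0$ and $s(a)\ge s(b)$ since $s$ is nonincreasing. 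Chaining these inequalities yields $2f(m)\ge f(a)+f(b)$, i.e.\ $f$ is midpoint concave on $[0,N^C]$. Since $s$ is concave it is continuous on the interval (the explicit satisfaction functions of the model are continuous anyway), so the product $f(x)=x\,s(x)$ is continuous; a continuous midpoint-concave function is concave, which closes the argument.

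I expect the main obstacle to be bookkeeping rather than conceptual: getting the sign of $(b-a)\bigl(s(a)-s(b)\bigr)$ right and making sure the monotonicity and concavity hypotheses are each invoked exactly where the smooth computation predicts. A clean alternative, avoiding the midpoint/continuity detour, is the supporting-line approach: at an interior point $x_0$ pick a supergradient $m\le 0$ of $s$ (it is nonpositive because $s$ is nonincreasing), and verify that the affine map $x\mapsto f(x_0)+\bigl(s(x_0)+x_0 m\bigr)(x-x_0)$ dominates $f$; the difference reduces to $m\,(x-x_0)^2\le 0$, exhibiting a supporting line of $f$ at every point and hence concavity. Either route makes transparent that concavity of $f$ rests on the combination of concavity and monotonicity of $s$ together with $x\ge 0$.
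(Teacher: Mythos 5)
Your proof is correct, but it takes a genuinely different route from the paper's. The paper proves the full concavity inequality $tf(x)+(1-t)f(y)\le f(tx+(1-t)y)$ directly for every $t\in[0,1]$: it rewrites $txs(x)+(1-t)ys(y)$ as $(tx+(1-t)y)$ times a convex combination of $s(x)$ and $s(y)$ with weights $\tfrac{tx}{tx+(1-t)y}$ and $\tfrac{(1-t)y}{tx+(1-t)y}$, applies concavity of $s$ at the point $\tfrac{tx^2+(1-t)y^2}{tx+(1-t)y}$, and then uses the inequality $(tx+(1-t)y)^2\le tx^2+(1-t)y^2$ together with monotonicity of $s$ to push that point down to $tx+(1-t)y$. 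This needs no continuity and no regularization. Your main route instead establishes only the midpoint inequality (your algebraic identity $\tfrac12(a+b)(s(a)+s(b))-(as(a)+bs(b))=\tfrac12(b-a)(s(a)-s(b))$ checks out) and then upgrades via continuity. That upgrade is where you incur a small technical debt: a concave function on the closed interval $[0,N^C]$ is guaranteed continuous only on the open interval and may jump down at the right endpoint, so strictly speaking the ``continuous midpoint-concave implies concave'' step needs a word about the boundary; you partly hedge this by noting the model's satisfaction functions are explicitly continuous. Your alternative supergradient argument is cleaner and closer in spirit to what the paper achieves: the computation $f(x)-\bigl(f(x_0)+(s(x_0)+x_0m)(x-x_0)\bigr)\le m(x-x_0)^2\le 0$ is correct (it uses $x\ge 0$ to multiply the supergradient inequality by $x$, and $m\le 0$ from monotonicity), and it exhibits a supporting line at every interior point without any continuity detour. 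Both your routes and the paper's make the same three hypotheses do the same work; the paper's buys full rigor on the closed interval in one self-contained computation, while yours is arguably more transparent about where each hypothesis enters.
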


\begin{proof}
The result comes easily if we suppose that the functions $s_{i}$ are twice differentiable, because we have: 
\[
\forall x \in \left[ 0, N_{i}^{C} \right], f_{i}''(x)=xs_{i}''(x)+2s_{i}'(x) \leq 0
\enspace .\]

We could deduce that the same is true without the differentiability assumption by a density argument, writing a concave function as a pointwise limit of smooth concave functions. However, we prefer to provide the following elementary argument.
Consider $0 \leq x \leq y \leq N_{i}^{C}$ and $t \in \left[ 0,1 \right]$. Because $s_{i}$ is nonincreasing, we have $s_{i}(x) \geq s_{i}(y)$. We have:
\begin{align*}
tf_{i}(x)+(1-t)f_{i}(y) &= txs_{i}(x)+(1-t)ys_{i}(y) \\
&= (tx+(1-t)y) \left[ \dfrac{tx}{tx+(1-t)y}s_{i}(x)+ \dfrac{(1-t)y}{tx+(1-t)y}s_{i}(y) \right] \\
& \leq (tx+(1-t)y) s_{i} \left( \dfrac{tx^{2}+(1-t)y^{2}}{tx+(1-t)y} \right)
\end{align*}
Because of the well-known inequality $2xy \leq x^{2}+y^{2}$, we have:
\begin{align*}
(tx+(1-t)y)^{2} &= t^{2}x^{2}+(1-t)^{2}y^{2}+2t(1-t)xy \\
& \leq tx^{2}+(1-t)y^{2}\enspace .
\end{align*}
Then, because $s_{i}$ is nonincreasing, we have:
\[
s_{i} \left( \dfrac{tx^{2}+(1-t)y^{2}}{tx+(1-t)y} \right) \leq s_{i}(tx+(1-t)y)
\enspace ,
\] so that:
\[
tf_{i}(x)+(1-t)f_{i}(y) \leq (tx+(1-t)y)s_{i}(tx+(1-t)y) = f_{i}(tx+(1-t)y)
\enspace ,
\]
and $f_{i}$ is concave.
\end{proof} 

\subsection{A tropical representation of customers' response} \label{subsec2-2}

The lower-level component of our bilevel problem can be studied thanks to tropical techniques. Tropical mathematics refers to the study of the max-plus semifield $\Rmax$, that is the set $\Rinfty$ endowed with two laws $\tplus$ and $\tdot$ defined by
\(a \tplus b = \max(a,b)\)
and
\(a \tdot b = a+b\),
see~\cite{bcoq,itenberg2009tropical,butkovicbook,MacLaganSturmfels} for background.
We first consider the relaxation in which the price vector $y $ can take any real value, i.e. $y\in \R^{n}$. Each customer $k$ defines his consumption $u_k^*$ by solving the problem:
\begin{equation}\label{low-level}
\max_{u_{k} \in F_{k}} \sum_{i} \left[ \rho_{k}(i)+ y_{i} \right] u_{k}(i) = \max_{u_{k} \in F_{k}} \langle \rho_{k}+y,u_{k} \rangle \enspace .
\end{equation}
The map $P_k: y \mapsto \max_{u_{k} \in F_{k}} \langle \rho_{k}+y,u_{k} \rangle$ is
convex, piecewise affine, and the gradients of its linear parts are integer valued. It can be thought of as a tropical polynomial function
in the variable $y$. Indeed, with the tropical notation, we have
\begin{align*}
\troppoly{P_k}(y)=\underset{u_k \in F_k}{\bigoplus} \left[ \underset{i \in \cro{n}}{\bigodot} \left( \rho_k(i) \odot y_i \right)^{\odot u_k(i)} \right]
\end{align*}
where $z^{\odot p} := z\odot \dots \odot z = p\times z$ denotes
the $p$th tropical power. In this way, we see
that all the monomials of $P_k$ have degree
$\sum_{i}{u_{k}(i)}=R_{k}$, so that $P_k$ is homogeneous
of degree $R_k$, in the tropical sense.
This remark leads to the following lemma:
\begin{lemma} \label{lemma-relaxed}
Denote by $e=(1 \dots 1) \in \R^{n}$.
Let $y$ be a solution of the relaxation $y \in \R^n$ of Problem~\ref{highlevel3}. 
Then, for all $\beta \in \R$, $y+\beta e$ is a solution of the relaxation $y \in \R^n$ of Problem~\ref{highlevel3}. 
\end{lemma}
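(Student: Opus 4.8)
The plan is to exploit the tropical homogeneity of each lower-level value map $P_k$ that was just observed above the statement: every feasible $u_k \in F_k$ satisfies the hard constraint $\sum_{i=1}^n u_k(i) = R_k$, so the total mass of any admissible consumption is a fixed constant, independent of which feasible point is chosen. The consequence I want to extract is that translating the price vector along the diagonal direction $e$ leaves each follower's set of optimal responses unchanged, and therefore leaves the whole high-level objective unchanged.

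First I would fix $k \in \cro{K}$, a price vector $y \in \R^n$, and a scalar $\beta \in \R$, and compute, for an arbitrary $u_k \in F_k$,
\[
\langle \rho_k + y + \beta e, u_k \rangle = \langle \rho_k + y, u_k \rangle + \beta \langle e, u_k \rangle = \langle \rho_k + y, u_k \rangle + \beta R_k ,
\]
where I have used $\langle e, u_k \rangle = \sum_{i=1}^n u_k(i) = R_k$. Since the additive term $\beta R_k$ does not depend on $u_k$, the objective of Problem~\ref{lowlevel3} is shifted by one and the same constant at every feasible point, so the maximizing set is preserved:
\[
\argmax_{u_k \in F_k} \langle \rho_k + y + \beta e, u_k \rangle = \argmax_{u_k \in F_k} \langle \rho_k + y, u_k \rangle .
\]

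Next I would transfer this invariance to the upper level. Because the set of optimal lower-level responses coincides for the prices $y$ and $y + \beta e$ for every $k$, the collection of attainable traffic vectors $N = (N_i)$, defined by $N_i = \sum_{k=1}^K u_k^*(i)$, is identical in the two cases; in particular the congestion constraints $N_i \leq N_i^C$ hold for $y + \beta e$ exactly when they hold for $y$. The high-level objective $\sum_{i=1}^n f_i(N_i)$ then ranges over the same set of achievable values at $y$ and at $y + \beta e$, so the relaxed value map of Problem~\ref{highlevel3} is constant along the direction $e$. Consequently, if $y$ attains the optimum of the relaxation $y \in \R^n$, then $y + \beta e$ attains the same value and is itself a solution.

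The only delicate point, which I expect to be the main (though minor) obstacle, is the possible non-uniqueness of a follower's best response: one must reason at the level of the \emph{argmax sets} rather than of a single selected optimizer, so that the bilevel value is genuinely unchanged regardless of whether the responses are read optimistically or pessimistically. Once the equality of argmax sets above is established, this causes no further difficulty, and no additional computation is required.
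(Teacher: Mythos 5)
Your proof is correct and follows essentially the same route as the paper: the paper also derives $P_k(y+\beta e)=P_k(y)+\beta R_k$ from the tropical homogeneity of $P_k$ (which is exactly your observation that $\langle e,u_k\rangle=R_k$ for all $u_k\in F_k$), concludes that the argmax sets coincide, and hence that the same traffic vectors $N$ are attainable. Your closing remark about reasoning at the level of argmax sets is implicitly how the paper handles it as well.
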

\begin{proof}
Consider a solution $y \in \R^{n}$ of the relaxed problem.
Because $P_k$ is homogeneous of degree $R_k$, 
we have for all $\beta \in \R^n$, $P_k(y+\beta e)=P_k(y)+\beta R_k$. In particular:
\[
u_k^* \in \arg\max_{u_k \in F_k} \langle \rho_k+y,u_k \rangle  \Leftrightarrow u_k^* \in \arg\max_{u_k \in F_k} \langle \rho_k+y+\beta e,u_k \rangle 
\]
Hence, $y+\beta e$ leads to the same repartition of the customers $N^{*}$ and corresponds also to an optimal solution of the relaxed bilevel problem. 
\end{proof}

\begin{corollary} \label{lemma-relax}
The bilevel problem~\ref{highlevel3} has the same value as its relaxation $y \in \R^{n}$.
\end{corollary}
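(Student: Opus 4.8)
The plan is to deduce the equality of values from the translation invariance established in Lemma~\ref{lemma-relaxed}, which lets me slide any optimal price vector of the relaxation into the nonnegative orthant at no cost.

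First I would record the easy inequality. Since $\R_+^n \subseteq \R^n$, every price vector feasible for Problem~\ref{highlevel3} is also feasible for its relaxation; moreover the two problems share the same objective $\sum_{i} f_i(N_i)$, the same congestion constraints $N_i \leq N_i^C$, and the same rule $N_i = \sum_{k} u_k^*(i)$ defining the repartition from the low-level optimizers. Enlarging the feasible set can only increase the optimal value, so the value of the relaxation is at least the value of Problem~\ref{highlevel3}. I would also note that the objective depends only on the repartition $N$, which ranges over a finite set (each $N_i$ is a nonnegative integer bounded by $N_i^C$); hence the relaxed supremum is attained at some $y^* \in \R^n$.

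For the reverse inequality, I would take such an optimizer $y^*$ and set $\beta = -\min_{i \in \cro{n}} y^*_i$, so that $\bar y := y^* + \beta e \in \R_+^n$. By Lemma~\ref{lemma-relaxed}, $\bar y$ is again an optimal solution of the relaxation and, crucially, induces exactly the same repartition $N^*$: adding $\beta e$ to the price leaves each set $\arg\max_{u_k \in F_k} \langle \rho_k + y, u_k \rangle$ unchanged by tropical homogeneity. Since both the objective and the congestion constraints depend on $y$ only through $N^*$, the vector $\bar y$ is feasible for Problem~\ref{highlevel3} and attains there the same value that $y^*$ attains in the relaxation. This gives the reverse inequality, and combining the two shows the values coincide.

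I do not expect a genuine obstacle here: the statement is essentially immediate once Lemma~\ref{lemma-relaxed} is available. The single point that must be handled with care is that the shift $y^* \mapsto \bar y$ preserves not only the objective value but also feasibility for the congestion constraints $N_i \leq N_i^C$; this is automatic precisely because the repartition $N^*$ is invariant under the shift, which is exactly the content supplied by the lemma.
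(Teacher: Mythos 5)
Your proof is correct and follows essentially the same route as the paper: both arguments take an optimizer $y^*$ of the relaxation and shift it by $\beta e$ with $\beta = -\min_i y^*_i$ into $\R_+^n$, invoking Lemma~\ref{lemma-relaxed} to preserve the induced repartition $N^*$ and hence the value. Your write-up merely makes explicit two points the paper leaves implicit (the trivial inequality from enlarging the feasible set, and attainment of the relaxed optimum), which is a harmless elaboration.
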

\begin{proof} Consider a solution $y^{*} \in \R^{n}$ of the relaxed problem,
and take $\beta \geq - \min_{i} y^{*}_i$.
Then, we have $y^* + \beta e \in R_+^n$ and solution of the relaxed problem according to Lemma~\ref{lemma-relaxed}.
Consequently, $y^* + \beta e$ is a solution of Problem~\ref{highlevel3}.
\end{proof}

By definition, the {\em tropical hypersurface}
associated to a tropical polynomial function is the nondifferentiability locus
of this function. 
Since the monomial $P_k$ is homogeneous, its associated tropical 
hypersurface is invariant by the translation by a constant vector.
Therefore, it can be represented as a subset of the
tropical projective space $\TP^{n-1}$.
The latter is
defined as the quotient of $\R^{n}$ by the equivalence relation
which identifies two vectors which differ by a constant vector, and 
it can be identified to $\R^{n-1}$ by the map 

$ \TP^{n-1}  \to \R^{n-1}$,
$y  \mapsto (y_{i}-y_{n})_{i\in \cro{n-1}}$.

\begin{example} \label{ex-1}
Consider a simple example with $T=3$ time steps (for instance morning, afternoon and evening), $L=1$ (that is $n=3$), $K=5$ and $\mathcal{J}_{k}= \emptyset $ for each $k$. 
The parameters of the customers are 
\begin{align*}
\rho_{1}  &= \left[ 0,0,0 \right],\; R_{1}=1,\quad 
\rho_{2}  =\left[ 0,-1,0 \right],\; R_{2}=2 \enspace ,\\
\rho_{3}  &=\left[ -1,1,0 \right],\; R_{3}=1  \enspace 
\rho_{4}  = \left[ 1/2,1/2,0 \right],\; R_{4}=2,
\enspace \\
\rho_{5}  &= \left[ 1/2,2,0 \right],\; R_{5}=1  \enspace .
\end{align*}
 
The tropical polynomial of the first customer is $\troppoly{P_{1}}(y)=\max \left( y_{1},y_{2},y_{3} \right)$,
meaning that this customer has no preference and consumes when the incentive is the best.
Its associated tropical hypersurface is a tropical line
(since $P_1$ has degree $1$),
so it splits $\TP^{2}$ in three different regions corresponding to a choice of the vector 
$u_{1}$ among $(1,0,0)$, $(0,1,0)$ and $(0,0,1)$, see Figure~\ref{fig-line}.
E.g., the cell labeled by $(1,0,0)$ represents a consumption
concentrated the morning, induced by a price $y_1>y_2$ and $y_1>y_3$.

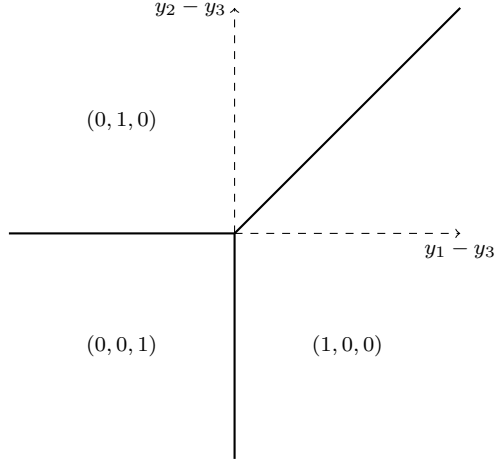
\begin{figure}[h]
\begin{center}
\begin{tikzpicture} [scale=1.5]
    \draw [black,thick] (-2,0)--(0,0);
    \draw [black,thick] (0,-2)--(0,0);
    \draw [black,thick] (0,0)--(2,2) ;
    \draw (-1,-1) node{$(0,0,1)$};
    \draw (-1,1) node{$(0,1,0)$};
    \draw (1,-1) node{$(1,0,0)$};
    \draw [dashed] [->](0,0)--(2,0) node[below]{$y_{1}-y_{3}$};
    \draw [dashed] [->](0,0)--(0,2) node[left]{$y_{2}-y_{3}$};
\end{tikzpicture}
\end{center}
\caption{A customer response: a tropical line splits the projective space into three cells. Each cell corresponds to a possible customer response}
\label{fig-line}
\end{figure}

To study jointly the responses of the five customers, 
we represent the arrangement of the tropical hypersurfaces associated to 
the $P_k, \; k\in \cro{5}$ (see Figure~\ref{fig-five}), with 
\begin{align*}
\troppoly{P_{2}}(y)  &=\max \left( y_{1}+y_{2}-1,y_{1}+y_{3},y_{2}+y_{3}-1 \right) ,\\
\troppoly{P_{3}}(y) & =\max \left( y_{1}-1,y_{2}+1,y_{3} \right) ,\\
\troppoly{P_{4}}(y)  &=\max \left( y_{1}+y_{2}+1,y_{1}+y_{3}+1/2,y_{2}+y_{3}+1/2 \right) ,\\
\troppoly{P_{5}}(y) & =\max \left( y_{1}+1/2,y_{2}+2,y_{3} \right)  \enspace .
\end{align*}


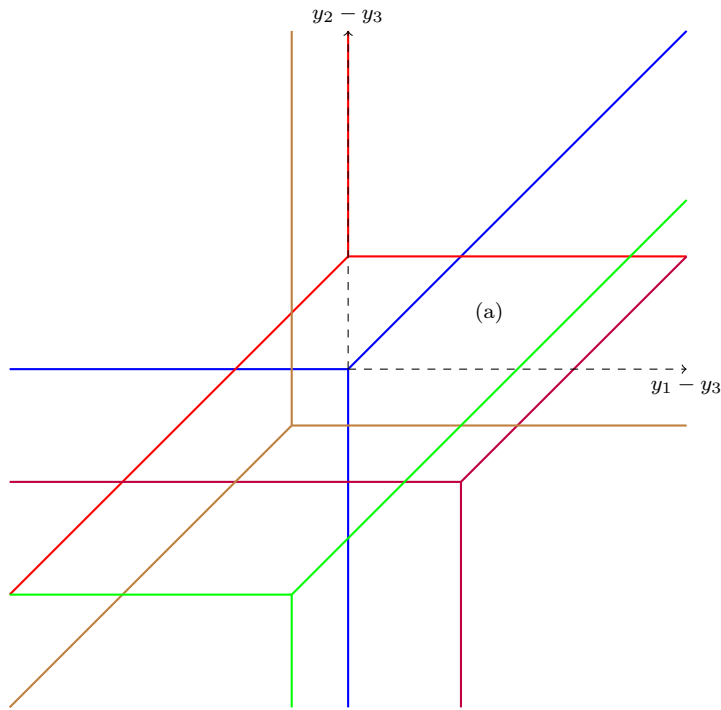
\begin{figure}[h]\vspace{-1em}
\begin{center}
\begin{tikzpicture} [scale=1.5]
   \draw [blue,thick] (0,0)--(-3,0);
    \draw [blue,thick] (0,0)--(0,-3); 
    \draw [blue,thick] (0,0)--(3,3); 
    \draw [red,thick] (0,1)--(0,3); 
    \draw [red,thick] (0,1)--(3,1); 
    \draw [red,thick] (0,1)--(-3,-2); 
    \draw [purple,thick] (1,-1)--(-3,-1); 
    \draw [purple,thick] (1,-1)--(1,-3); 
    \draw [purple,thick] (1,-1)--(3,1); 
    \draw [brown,thick] (-0.5,-0.5)--(-0.5,3); 
    \draw [brown,thick] (-0.5,-0.5)--(3,-0.5); 
    \draw [brown,thick] (-0.5,-0.5)--(-3,-3); 
    \draw [green,thick] (-0.5,-2)--(-3,-2); 
    \draw [green,thick] (-0.5,-2)--(-1/2,-3); 
    \draw [green,thick] (-0.5,-2)--(3,1.5); 
	\draw [black] (1.25,0.5) node{(a)};
    \draw [dashed] [->](0,0)--(3,0) node[below]{$y_{1}-y_{3}$};
    \draw [dashed] [->](0,0)--(0,3) node[above]{$y_{2}-y_{3}$};
\end{tikzpicture}
\end{center}
\vspace{-.6em}
\caption{Arrangement of tropical hypersurfaces: each tropical hypersurface corresponds to a customer response. For example, the cell (a) corresponds to discounts $y$ with responses \color{blue} (1,0,0) for customer 1, \color{red} (1,0,1) for customer 2, \color{purple} (0,1,0) for customer 3, \color{brown} (1,1,0) for customer 4 and \color{green} (0,1,0) for customer 5 \color{black}. Hence, the total number of customers in the network with these discounts is (3,3,1). }\label{fig-five}
\vspace{-.4em}
\end{figure}
\end{example}

\begin{lemma}[Corollary of \protect{\cite[\S 4, Lemma 3.1]{tran2015product}}] \label{tropicallemma}
Each cell of the arrangement of tropical hypersurfaces corresponds to a collection of customers responses $(u_{1},...,u_{K})$ and
 to a unique traffic vector $N$, defined by $N=\sum_{k}{u_{k}}$. 
\end{lemma}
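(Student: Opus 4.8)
The plan is to identify each full-dimensional cell of the arrangement with a region on which every tropical polynomial $P_{k}$ is smooth, and then to show that on such a region each customer has exactly one response, so that the tuple $(u_{1},\dots,u_{K})$ and the traffic vector $N=\sum_{k}u_{k}$ are well defined and locally constant.

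First I would unfold the definitions. By definition the tropical hypersurface $\mathcal{H}_{k}$ attached to $P_{k}$ is its nondifferentiability locus, so a cell $\cell$ of the arrangement is a connected component of the complement $\R^{n}\setminus\bigcup_{k}\mathcal{H}_{k}$. Consequently every $P_{k}$ is differentiable at each point of $\cell$. (The invariance by $\beta e$ noted earlier lets one work equivalently in $\TP^{n-1}$, but this reduction is not needed for the argument.)

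Next I would characterize differentiability of $P_{k}$ at a point $y$ in terms of the response. Since $P_{k}(y)=\max_{u\in F_{k}}\bigl(\langle\rho_{k},u\rangle+\langle y,u\rangle\bigr)$ is a pointwise maximum of finitely many affine functions, and the gradient in $y$ of the piece indexed by $u$ is exactly $u$, the standard subdifferential formula for a maximum of affine functions gives $\partial P_{k}(y)=\conv\bigl(\arg\max_{u\in F_{k}}\langle\rho_{k}+y,u\rangle\bigr)$. Two distinct elements of $F_{k}\subset\{0,1\}^{n}$ are distinct points of $\R^{n}$, hence correspond to distinct gradients, so $\partial P_{k}(y)$ is a singleton precisely when the maximizer $u_{k}$ is unique. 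Thus at every $y\in\cell$ the response $u_{k}=u_{k}(y)$ is uniquely determined.

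Finally I would verify constancy on $\cell$. On $\cell$ the function $P_{k}$ coincides with the single affine piece $y\mapsto\langle\rho_{k}+y,u_{k}\rangle$, whose gradient is the constant vector $u_{k}$; since $\cell$ is connected, $u_{k}$ cannot jump across $\cell$. Hence the whole collection $(u_{1},\dots,u_{K})$ is constant on $\cell$, and the associated $N=\sum_{k}u_{k}$ is a single, well-defined integer vector. I expect the one delicate point to be the passage from ``$P_{k}$ is differentiable at $y$'' to ``the maximizer is unique''; once the support-function/subdifferential description of the gradients is set up, both the uniqueness and the local constancy are immediate, and this is exactly the content imported from the cited Tran--Yu lemma.
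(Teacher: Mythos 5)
Your argument is correct. Note, however, that the paper does not actually supply a proof of this lemma: it is stated as a corollary of \cite[\S 4, Lemma 3.1]{tran2015product} and the result is simply imported. What you have written is therefore not a variant of the paper's proof but a self-contained replacement for the citation, and it is a sound one. The chain ``cell $=$ connected component of $\R^{n}\setminus\bigcup_{k}\mathcal{H}_{k}$'' $\Rightarrow$ ``each $P_{k}$ differentiable on the cell'' $\Rightarrow$ ``$\partial P_{k}(y)=\conv(\arg\max_{u\in F_{k}}\langle\rho_{k}+y,u\rangle)$ is a singleton, hence the maximizer is unique since distinct elements of $F_{k}\subset\{0,1\}^{n}$ are distinct gradients'' $\Rightarrow$ ``the gradient is locally constant with values in the finite set $F_{k}$, hence constant on the connected cell'' is complete, and it establishes exactly what the paper needs: the map from cells to tuples $(u_{1},\dots,u_{K})$, and hence to $N=\sum_{k}u_{k}$, is well defined (uniqueness of $N$ per cell, not injectivity of the assignment). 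Your approach buys something concrete: it avoids importing the Tran--Yu machinery of regular subdivisions and products of tropical polynomials, and instead uses only the subdifferential calculus for pointwise maxima of affine functions --- the same convex-analytic toolkit the paper already deploys in Lemma~\ref{lemma-realfeasible} via $\partial\varphi_{k}^{*}$ --- so the lemma becomes self-contained within the paper's own framework. One small caveat you handled correctly but should keep explicit: the statement holds as written only for the full-dimensional (open) cells of the arrangement; on lower-dimensional faces the maximizers need not be unique. This restriction is consistent with how the paper uses the lemma (cf.\ the cell labelled (a) in Figure~\ref{fig-five}), but it is worth saying once.
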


\subsection{Decomposition theorem} \label{subsec2-3}

We next show that the present bilevel problem can be solved by decomposition.
We note that the function to optimize for the higher level problem, i.e. the optimization problem of the provider, depends only on $N$. The variables $y_i$ allow one to generate the different possible vectors $N$.

\begin{definition} \label{def-feasible}
A vector $N \in \Z^n$ is said to be \textit{feasible} if there exists $K$ vectors $u_1^*, \dots, u_K^*$ such that $N=\sum_{k=1}^K u_k^*$ and there exists $y \in \R^n$ such that for each $k \in \cro{K}$,
$u_k^* \in \arg\max_{u_k \in F_k} \langle \rho_k +y,u_k \rangle$.
\end{definition}

So, we will characterize the feasible vectors $N$ in order to optimize directly the satisfaction function on the set of feasible $N$.
We define the relaxation of Problem~\ref{highlevel3} to the case $y \in \R^n$. 

\begin{problem}[Bilevel problem with real discounts] \label{bilevel-telecom}
\begin{equation*}
\max_{y \in \R^n} \sum_{i=1}^n f_i(N_i) \quad 
\text{s.t.} \;  \forall i, N_i \leq N_i^C,
\end{equation*}
with $N=\sum_{k=1}^K u_k^*$ and for all $k \in \cro{K}$, $u_k^*$ solution of:
\begin{equation*} 
\max_{u_k \in F_k} \quad \langle \rho_k + y,u_k \rangle.
\end{equation*}
\end{problem}

According to Lemma~\ref{lemma-relax}, Problem~\ref{highlevel3} has the same value than the relaxation problem~\ref{bilevel-telecom}. 
Moreover, according to Lemma~\ref{lemma-relaxed}, if $(y^*,N^*)$ is an optimal solution of Problem~\ref{bilevel-telecom}, 
then $(y^*+\beta e,N^*)$ is also an optimal solution of Problem~\ref{bilevel-telecom} for every $\beta \in \R$. 
We recall that $e \in \R^n$ is a vector defined by $e^T = (1,\dots,1)$. 
Then, if we find an optimal solution $(y^*,N^*)$ of Problem~\ref{bilevel-telecom}, 
then $(y^*+\beta e,N^*)$ with $\beta = - \min_{i \in \cro{n}} y^*_i$ is a solution of Problem~\ref{bilevel-telecom} such that $y^* + \beta e \in \R_+^n$. 
Consequently, $(y^*+\beta e,N^*)$ is a solution of Problem~\ref{highlevel3}.
Hence, a solution of Problem~\ref{bilevel-telecom} (with real discounts) provides a solution of Problem~\ref{highlevel3} (with nonnegative discounts). 
In the sequel, we will study the bilevel problem \ref{bilevel-telecom}.

Most of the following results are applications of classical notions of convex analysis which can be found in \cite{rockafellar1970convex}.
It is convenient to introduce the convex characteristic function $\chi_A$ of a set $A \subset \R^n$, 
defined by $\chi_A(x)= 0$ if $x \in A$,
and $\chi_A(x)=+\infty$ otherwise. 
If $A$ is a convex set, then $\chi_A$ is a convex function.
We define also for every $k$ the polytope $\Delta_{k}$ as the convex hull of $F_{k}$, together with the convex function $\varphi_k$ defined by
$\varphi_k (u)= - \langle \rho_k,u \rangle + \chi_{\Delta_k}(u)$.

\begin{lemma} \label{lemma-delta}
$\Delta_{k} = \lbrace u \in \left[ 0,1 \right]^{n} | \sum_{i=1}^n u(i)=R_k$ and $\forall i \in \mathcal{J}_{k}, u(i)=0 \rbrace$ and $F_k$ is exactly the set of vertices of $\Delta_k$.
\end{lemma}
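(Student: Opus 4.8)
The plan is to set $P := \{\, u \in [0,1]^n \mid \sum_{i=1}^n u(i) = R_k \text{ and } \forall i \in \mathcal{J}_k,\ u(i)=0 \,\}$, i.e. the set on the right-hand side, and to prove the two equalities $\Delta_k = P$ and $F_k = \operatorname{vertices}(\Delta_k)$ by sandwiching. The inclusion $\Delta_k \subseteq P$ is immediate: $P$ is cut out by linear equalities together with the box inequalities $0 \le u(i) \le 1$, hence is a (bounded, therefore compact) polytope, in particular convex; every element of $F_k$ trivially satisfies all these constraints, so $F_k \subseteq P$, and taking convex hulls gives $\Delta_k = \conv(F_k) \subseteq P$.

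The reverse inclusion is the heart of the argument and reduces to showing that every vertex of $P$ is integral, hence lies in $F_k$. I would argue by contradiction: suppose $u$ is a vertex of $P$ with a fractional coordinate $u(i) \in (0,1)$. Since $R_k$ is an integer and $\sum_i u(i) = R_k$, the fractional parts cannot cancel with a single fractional coordinate, so there must exist a second fractional coordinate $u(j) \in (0,1)$ with $j \neq i$. Both $i$ and $j$ lie outside $\mathcal{J}_k$, since the coordinates indexed by $\mathcal{J}_k$ are pinned to $0$. Then for small $\varepsilon > 0$ the points $u^{\pm}$, obtained from $u$ by replacing $u(i)$ with $u(i) \pm \varepsilon$ and $u(j)$ with $u(j) \mp \varepsilon$ (all other coordinates unchanged), both lie in $P$: they preserve the coordinate sum $R_k$, keep every $\mathcal{J}_k$-coordinate equal to $0$, and remain in $[0,1]^n$ because $u(i)$ and $u(j)$ are strictly interior. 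But $u = \tfrac12(u^+ + u^-)$, contradicting that $u$ is a vertex. Hence every vertex of $P$ is $0/1$-valued and satisfies the defining constraints, i.e. $\operatorname{vertices}(P) \subseteq F_k$. As $P$ is a polytope, $P = \conv(\operatorname{vertices}(P)) \subseteq \conv(F_k) = \Delta_k$, which together with the first paragraph yields $\Delta_k = P$.

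It then remains to identify $F_k$ with the vertex set. The previous step already gives $\operatorname{vertices}(\Delta_k) = \operatorname{vertices}(P) \subseteq F_k$. For the converse, I would use that $\Delta_k \subseteq [0,1]^n$ and that the $0/1$ vectors are exactly the vertices of the cube $[0,1]^n$: any extreme point of the cube that happens to belong to the subpolytope $\Delta_k$ stays extreme in $\Delta_k$, since it cannot be written as a nontrivial convex combination of other points of the cube, a fortiori of $\Delta_k$. Every element of $F_k$ is such a $0/1$ point of $\Delta_k$, so $F_k \subseteq \operatorname{vertices}(\Delta_k)$, and hence $F_k = \operatorname{vertices}(\Delta_k)$.

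The one genuinely load-bearing step is the integrality of the vertices of $P$; everything else is routine convex geometry. The exchange perturbation that moves mass from coordinate $j$ to coordinate $i$ is what makes it work, and it is available precisely because the single equality constraint $\sum_i u(i) = R_k$ leaves a nonempty feasible direction whenever two coordinates are simultaneously fractional. Equivalently one could invoke the total unimodularity of the constraint matrix — a single all-ones row together with the identity block from the box and fixing constraints — but the direct exchange argument is cleaner and self-contained, and it also explains geometrically why $\Delta_k$ is a (coordinate-restricted) hypersimplex.
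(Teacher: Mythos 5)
Your proof is correct and follows essentially the same route as the paper: both arguments hinge on the same $\varepsilon$-exchange perturbation between two fractional coordinates to show that every vertex of the right-hand-side polytope is a $0/1$ point, hence lies in $F_k$. The only (minor) divergence is in the final inclusion $F_k \subseteq \operatorname{vertices}(\Delta_k)$, where the paper invokes total unimodularity of the single all-ones constraint row while you give the more elementary observation that a vertex of the cube $[0,1]^n$ lying in a subpolytope of the cube remains a vertex of that subpolytope; both are valid, and yours is self-contained.
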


\begin{proof}
%

Let us define the polytope $\Delta_k'=\lbrace u \in \left[ 0,1 \right]^n \mid \sum_{i=1}^n u(i)=R_{k} $ and $\forall (t,l) \in \mathcal{J}_{k}, u(i)=0 \rbrace$.
Clearly, $F_k \subset \Delta_k'$.
Then, $\Delta_k \subset \Delta_k'$.


Consider a point $u$ of $\Delta_k'$ which is not in $F_k$. There exists an index $i$ such that $0 < u(i) < 1$. In particular $u(i) \notin \N$. However, $\sum_{i} u(i)  = R_{k} \in \N$. So, there exists another index $j$ such that  $0< u(j) < 1$. Hence, there exists $\varepsilon > 0$ such that the points $u^{-}$ ans $u^{+}$ defined by:
\[
\left\lbrace \begin{array}{c}
u^{-}(i)=u(i)-\varepsilon \text{ and } u^{+}(i)=u(i)+\varepsilon \\
u^{-}(j)=u(j)+\varepsilon \text{ and } u^{+}(j)=u(j)-\varepsilon \\
u^{-}(k)=u^{+}(k)=u(k) \text{ otherwise }
\end{array} \right.
\] are in $\Delta_k'$. Because $x=\frac{x^{-}+x^{+}}{2}$ with $x \neq x^{-}$ and $x \neq x^{+}$, $x$ is not a vertex of $\Delta_k'$.
Consequently, the set of vertices of $\Delta_k'$ is included in $F_k$. 
Because $\Delta_k'$ is the convex hull of its vertices, we have $\Delta_k' \subset \Delta_k$. 

The polytope $\Delta_k$ is such that $\Delta_k=\{ u \in \R^n \mid 0 \leq u \leq a \; \text{and} \; e^Tu=R_k \}$,
with $a(i)=0$ if $i \in \mathcal{J}_k$ and $a(i)=1$ otherwise, and
$e^T=(1, \dots ,1) \in \mathcal{M}_{1,n}(\R)$. 
Then, because $e^T$ is a totally unimodular matrix, the vertices of $\Delta_k$ are exactly its integer points, 
that is $F_k$.
\end{proof}

\begin{corollary} \label{cor-fenchel}
The value of each low level problem~\ref{lowlevel3} is the value of the Legendre-Fenchel transform of $\varphi_{k}$ at point $y$, i.e. 
$ \varphi_{k}^{*}(y) = \sup_{u_{k} \in \Delta_{k}} \left[ \langle y,u_{k} \rangle - \varphi_{k}(u_{k}) \right] $.
\end{corollary}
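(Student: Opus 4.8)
The plan is to unfold the definition of the Legendre--Fenchel transform, use the indicator term in $\varphi_k$ to collapse the domain onto $\Delta_k$, and then invoke the fundamental fact of linear programming that a linear functional attains its maximum over a polytope at one of its vertices, together with the vertex characterization of Lemma~\ref{lemma-delta}. First I would recall that, interpreted as the usual Fenchel conjugate over $\R^n$, one has
\[
\varphi_k^*(y) = \sup_{u \in \R^n}\bigl[\langle y,u\rangle - \varphi_k(u)\bigr]
= \sup_{u \in \R^n}\bigl[\langle y,u\rangle + \langle \rho_k,u\rangle - \chi_{\Delta_k}(u)\bigr].
\]
Since $\chi_{\Delta_k}(u) = +\infty$ for $u \notin \Delta_k$ and $\chi_{\Delta_k}(u) = 0$ for $u \in \Delta_k$, only points of $\Delta_k$ contribute to the supremum, so the conjugate reduces to
\[
\varphi_k^*(y) = \sup_{u \in \Delta_k}\langle \rho_k + y, u\rangle,
\]
which is exactly the expression written in the statement once one notes that on $\Delta_k$ the quantity $\langle y,u\rangle - \varphi_k(u)$ equals $\langle \rho_k + y, u\rangle$.

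Next I would exploit the geometry of $\Delta_k$. By Lemma~\ref{lemma-delta} the set $\Delta_k$ is a nonempty, bounded polytope, hence compact, and the map $u \mapsto \langle \rho_k + y, u\rangle$ is linear. The supremum of a linear functional over a compact convex polytope is therefore attained, and it is attained at a vertex of the polytope; thus the supremum is in fact a maximum equal to $\max_{u \in V}\langle \rho_k + y, u\rangle$, where $V$ denotes the vertex set of $\Delta_k$. Invoking again Lemma~\ref{lemma-delta}, which identifies $V$ with $F_k$, I would substitute $V = F_k$ to obtain
\[
\varphi_k^*(y) = \max_{u_k \in F_k}\langle \rho_k + y, u_k\rangle,
\]
and the right-hand side is by definition the value of Problem~\ref{lowlevel3}, completing the argument.

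There is no genuine obstacle here: the proof is a routine combination of the definition of the Fenchel transform, the compactness of $\Delta_k$, and the vertex characterization already established. The only two points requiring a word of care are the reduction of the supremum over $\R^n$ to a supremum over $\Delta_k$, which is justified precisely by the convex indicator $\chi_{\Delta_k}$ built into $\varphi_k$, and the upgrade from $\sup$ to $\max$, which relies on the boundedness of $\Delta_k$ so that the extremum is attained at an extreme point.
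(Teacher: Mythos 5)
Your proof is correct and follows essentially the same route as the paper's: both arguments rest on unfolding the conjugate, using the indicator $\chi_{\Delta_k}$ to restrict the supremum to $\Delta_k$, and then invoking Lemma~\ref{lemma-delta} together with the fact that a linear functional over a polytope attains its maximum at a vertex to identify the supremum with $\max_{u_k\in F_k}\langle\rho_k+y,u_k\rangle$. The only difference is one of presentation (you work from the conjugate toward the low-level value, the paper works in the reverse direction), so nothing substantive separates the two.
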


\begin{proof} 
The vertices of $\Delta_{k}$ are $F_{k}$. Hence:
\[
\max_{u_k \in F_k} \langle \rho_k+ y,u_k \rangle 
= \sup_{u_k \in \Delta_k} \langle \rho_k+ y,u_k \rangle
= \sup_{u_k \in \Delta_k} \langle y,u_k \rangle - \varphi_k(u_k)
\]
\end{proof}

We want to characterize the feasible vectors.
We have first the following result.

\begin{lemma} \label{lemma-realfeasible}
Let $N$ be a real vector.
Then, there exists $y \in \R^n$ and $u_1^*, \dots, u_K^*$ such that $N=\sum_{k \in \cro{K}} u_k^*$ and for every $k \in \cro{K}$, $u_k^* \in \arg\max_{u_k \in \Delta_k} \langle \rho_k+y,u_k \rangle$  if and only if $N \in \sum_{k \in \cro{K}} \Delta_k$.
\end{lemma}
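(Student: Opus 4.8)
The plan is to read the optimality condition as a simultaneous subdifferential statement and to reduce the existence of a \emph{common} price $y$ to the subdifferentiability of an infimal convolution. The forward implication is immediate: if such $y$ and $u_1^*,\dots,u_K^*$ exist, then in particular each $u_k^*\in\Delta_k$, so $N=\sum_{k\in\cro{K}}u_k^*\in\sum_{k\in\cro{K}}\Delta_k$. For the converse I would first restate the optimality condition in conjugate form. By Corollary~\ref{cor-fenchel} we have $\varphi_k^*(y)=\sup_{u\in\Delta_k}\langle\rho_k+y,u\rangle$, and a point $u_k^*$ lies in $\arg\max_{u_k\in\Delta_k}\langle\rho_k+y,u_k\rangle$ if and only if it attains this supremum, i.e. $\langle y,u_k^*\rangle-\varphi_k(u_k^*)=\varphi_k^*(y)$, which is precisely $y\in\partial\varphi_k(u_k^*)$.

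Next I would introduce the infimal convolution $\Phi(N)=\inf\{\sum_{k\in\cro{K}}\varphi_k(u_k)\;:\;\sum_{k\in\cro{K}}u_k=N\}$. Since $\dom\varphi_k=\Delta_k$, its domain is $\dom\Phi=\sum_{k\in\cro{K}}\Delta_k$, so the hypothesis $N\in\sum_{k\in\cro{K}}\Delta_k$ means exactly $\Phi(N)<+\infty$. Moreover the standard conjugacy rule for infimal convolutions gives $\Phi^*=\sum_{k\in\cro{K}}\varphi_k^*$, see \cite{rockafellar1970convex}. Because each $\Delta_k$ is a compact polytope, the feasible set $\{(u_k)_k\in\prod_k\Delta_k:\sum_k u_k=N\}$ is compact and, by hypothesis, nonempty; as the objective is affine on it, the infimum defining $\Phi(N)$ is attained at some decomposition $(u_1^*,\dots,u_K^*)$ with $u_k^*\in\Delta_k$ and $\sum_{k\in\cro{K}}u_k^*=N$.

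The crux is to produce a single price $y$ optimal for every customer at once, and I would obtain it as a subgradient $y\in\partial\Phi(N)$. Each $\varphi_k$ is a polyhedral convex function (an affine term plus the indicator of a polytope), hence so is $\Phi$; a polyhedral convex function is subdifferentiable at every point of its domain, so $\partial\Phi(N)\neq\emptyset$ and I may fix some $y\in\partial\Phi(N)$. Writing the Fenchel equality $\Phi(N)+\Phi^*(y)=\langle N,y\rangle$ and substituting $\Phi(N)=\sum_k\varphi_k(u_k^*)$, $\Phi^*(y)=\sum_k\varphi_k^*(y)$ and $N=\sum_k u_k^*$ yields $\sum_{k\in\cro{K}}\bigl[\varphi_k(u_k^*)+\varphi_k^*(y)-\langle u_k^*,y\rangle\bigr]=0$. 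Every bracket is nonnegative by the Fenchel--Young inequality, so each must vanish, i.e. $y\in\partial\varphi_k(u_k^*)$ for all $k$; by the reformulation of the first paragraph this is exactly $u_k^*\in\arg\max_{u_k\in\Delta_k}\langle\rho_k+y,u_k\rangle$, which settles the converse.

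The step I expect to be delicate is the nonemptiness of $\partial\Phi(N)$ when $N$ lies on the boundary of the Minkowski sum. For a general convex function the subdifferential can be empty on the relative boundary of its domain, and producing a common optimal price only for $N$ in the relative interior would not suffice here. It is precisely the polyhedrality of the $\varphi_k$, and the resulting polyhedrality of $\Phi$, that guarantees a shared optimal price $y$ exists for \emph{every} $N\in\sum_{k\in\cro{K}}\Delta_k$; this is the point I would take care to justify rather than treat as routine.
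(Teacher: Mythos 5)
Your argument is correct and follows essentially the same route as the paper: both reduce the existence of a common price to the nonemptiness of $\partial\psi(N)$ for the infimal convolution $\psi=\square_k\varphi_k$ (your $\Phi$), justified by polyhedrality via \cite[Th.~23.10]{rockafellar1970convex}, together with the conjugacy identity $\psi^*=\sum_k\varphi_k^*$. The only cosmetic difference is that the paper invokes the subdifferential sum rule \cite[Th.~23.8]{rockafellar1970convex} to write $\sum_k\partial\varphi_k^*(y)=\partial\psi^*(y)$, whereas you recover the individual optimality conditions by splitting the Fenchel--Young equality into nonnegative terms; the delicate point you flag (subdifferentiability on the boundary of the Minkowski sum) is exactly the one the paper handles the same way.
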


\begin{proof}
Such vectors $u_k^*$ belong to $\Delta_k$, so $N \in \sum_{k \in \cro{K}} \Delta_k$. 

Let $k \in \cro{K}$ and $y \in \R^n$.
A vector $u_k^* \in \Delta_k$ is such that $u_k^* \in \arg\max_{u_k \in \Delta_k} \langle \rho_k+y,u_k \rangle$ if and only if $u_k^* \in \partial \varphi_{k}^{*}(y)$,
where $\partial \varphi_{k}^{*}$ denotes the subdifferential of the convex function $\varphi_{k}^{*}$.
Then, a vector $N = \sum_k u_k^*$ if and only if  $N \in \sum_k \partial \varphi_k^*(y)$.
By~\cite[Th.\ 23.8]{rockafellar1970convex}, 
$\sum_k \partial \varphi_k^*(y) =  \partial \left( \sum_{k}{ \varphi_{k}^{*}}\right)(y) = \partial \psi^{*}(y) $,
where $\psi= \underset{k}{\square} \varphi_{k}$ is the inf-convolution of the functions $\varphi_{k}$. 

Let $N$ be a real vector. 
Then, there exists $y \in \R^n$ and $u_1^*, \dots, u_K^*$ such that $N=\sum_{k \in \cro{K}} u_k^*$ and for every $k \in \cro{K}$, $u_k^* \in \arg\max_{u_k \in \Delta_k} \langle \rho_k+y,u_k \rangle$  if and only if $N \in \partial \psi^*(y)$,
or equivalenty $y \in \partial \psi(N)$ (because $\psi$ is convex), 
that is if and only if $\partial \psi(N) \neq \emptyset$.
The function $\psi$ is polyhedral (as the inf-convolution of polyhedral convex functions) and it is finite at every point in $\sum_{k}{\Delta_{k}}$. 
So, $\forall N' \in\sum_{k}{\Delta_{k}}, \partial \psi(N')$ is a non-empty polyhedral convex set \cite[Th.\ 23.10]{rockafellar1970convex}.
The result comes straightforwardly.
\end{proof}

It is now possible to characterize the feasible vectors.
\begin{lemma} \label{lemma-feasible}
A vector $N \in \Z^n$ is feasible if and only if $N \in \sum_k F_k$.
\end{lemma}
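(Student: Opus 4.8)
The forward implication will be immediate from the definitions: if $N$ is feasible in the sense of Definition~\ref{def-feasible}, then $N=\sum_k u_k^*$ with each $u_k^*\in F_k$, so $N\in\sum_k F_k$ by construction. All the content is in the converse, and my plan is to derive it from the ``real'' version already proved in Lemma~\ref{lemma-realfeasible} by an integral rounding argument.

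So assume $N\in\sum_k F_k$. Since $F_k\subseteq\Delta_k$, we have $N\in\sum_k\Delta_k$, and Lemma~\ref{lemma-realfeasible} then provides a \emph{single} price $y\in\R^n$ together with points $u_k^{**}\in\Delta_k$ such that $N=\sum_k u_k^{**}$ and $u_k^{**}\in\arg\max_{u_k\in\Delta_k}\langle\rho_k+y,u_k\rangle=:G_k$. The essential feature is that this $y$ is common to all customers, so the optimal faces $G_k$ are well defined and the $u_k^{**}$ certify that $N$ lies in the Minkowski sum $\sum_k G_k$. What remains is to replace the (possibly fractional) $u_k^{**}$ by vertices of $\Delta_k$ lying in the same faces $G_k$, \emph{without changing} $y$.

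To do this I would first describe the faces explicitly. For each $k$, maximizing $\langle\rho_k+y,\cdot\rangle$ (with the convention $\rho_k(i)=-\infty$ on $\mathcal{J}_k$) over the hypersimplex $\Delta_k$ of Lemma~\ref{lemma-delta} amounts to ranking the indices by the value $\rho_k(i)+y_i$: a threshold splits the coordinates into those forced to $1$, those forced to $0$ (which include all of $\mathcal{J}_k$, since there $\rho_k(i)=-\infty$), and a ``tied'' set of free coordinates on which $G_k$ is again a hypersimplex. Hence $G_k$ is cut out by integral bounds $\ell_k\le u_k\le h_k$ with $\ell_k,h_k\in\{0,1\}^n$, together with the single equation $\sum_i u_k(i)=R_k$. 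I would then consider the polytope $\{(u_1,\dots,u_K)\mid \ell_k\le u_k\le h_k,\ \sum_i u_k(i)=R_k\ \forall k,\ \sum_k u_k(i)=N_i\ \forall i\}$. Its defining equalities are exactly the row-sum and column-sum constraints of a capacitated transportation problem on the bipartite graph between customers and space-time indices; the constraint matrix is the incidence matrix of a bipartite graph, hence totally unimodular, and all the bounds and right-hand sides $R_k$, $N_i$, $\ell_k$, $h_k$ are integers. Since $(u_1^{**},\dots,u_K^{**})$ belongs to it, the polytope is nonempty, so by total unimodularity it has an integral vertex $(\tilde u_1,\dots,\tilde u_K)$. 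Each $\tilde u_k\in\{0,1\}^n$ satisfies $\sum_i\tilde u_k(i)=R_k$ and vanishes on $\mathcal{J}_k$, so $\tilde u_k\in F_k$; and $\ell_k\le\tilde u_k\le h_k$ forces $\tilde u_k$ into the optimal face $G_k$, so $\tilde u_k\in\arg\max_{u_k\in F_k}\langle\rho_k+y,u_k\rangle$. As $\sum_k\tilde u_k=N$, the pair $(y,(\tilde u_k)_k)$ witnesses feasibility of $N$.

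The main obstacle will be precisely this rounding step: passing from the fractional optimizers supplied by Lemma~\ref{lemma-realfeasible} to genuine vertices of the $\Delta_k$ that are \emph{simultaneously} optimal for one common price. The crux is to observe that, once $y$ is fixed, the conditions ``$u_k$ lies in the optimal face $G_k$'' and ``$\sum_k u_k=N$'' assemble into a totally unimodular transportation-type system with integral data, so integrality is automatic; equivalently, this is the integer decomposition property of the Minkowski sum of the hypersimplicial faces $G_k$.
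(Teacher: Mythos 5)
Your proof is correct, and it reaches the conclusion by the same core tool as the paper --- total unimodularity of the bipartite (customer)$\times$(space-time index) transportation system --- but it secures the key point, simultaneous optimality of an \emph{integral} decomposition for one common price, in a genuinely different and arguably cleaner way. The paper passes through the chain ``feasible $\Leftrightarrow N\in(\sum_k\Delta_k)\cap\Z^n$'' by asserting $\arg\max_{u_k\in F_k}\langle\rho_k+y,u_k\rangle=\arg\max_{u_k\in\Delta_k}\langle\rho_k+y,u_k\rangle$ (which, taken literally, is only an inclusion: the optimal face of $\Delta_k$ may contain fractional points), and then applies the unimodularity argument to the full polytope $\Delta_N=\{(u_1,\dots,u_K)\in\Delta_1\times\dots\times\Delta_K\mid\sum_ku_k=N\}$; as written, the integral decomposition it extracts is not visibly optimal for the price $y$, and one must implicitly invoke the inf-convolution characterization (the paper's Lemma~\ref{lemma-infconv}) to close that loop. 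You instead fix the common price $y$ from Lemma~\ref{lemma-realfeasible}, describe the optimal faces $G_k$ explicitly as hypersimplicial slices $\{\ell_k\le u_k\le h_k,\ \sum_iu_k(i)=R_k\}$ with integral bounds, and run the Hoffman--Kruskal integrality argument on the transportation polytope restricted to $\prod_kG_k$; the resulting integral point is then optimal for $y$ by construction. What your route buys is a self-contained converse that does not rely on the argmax-equality claim or on the later inf-convolution lemma; what the paper's route buys is that the identity $\sum_kF_k=(\sum_k\Delta_k)\cap\Z^n$ and the polytope $\Delta_N$ are isolated as reusable objects (they reappear in Theorem~\ref{th-compdecgeneral} and in the second step of the decomposition). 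Both arguments are valid; yours is the tighter proof of this particular lemma.
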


\begin{proof}
According to Definition~\ref{def-feasible}, a vector $N \in \Z^n$ is feasible if and only if there exists $y \in \R^n$ and $K$ vectors $(u_k^*)_{k \in \cro{K}}$ such that $N=\sum_k u_k^*$ and $u_k^* \in \arg\max_{u_k \in F_k} \langle \rho_k+y,u_k \rangle$. 
As a consequence of Lemma~\ref{lemma-delta}, $\arg\max_{u_k \in F_k} \langle \rho_k+y,u_k \rangle = \arg\max_{u_k \in \Delta_k} \langle \rho_k+y,u_k \rangle$.
Then, by Lemma~\ref{lemma-realfeasible}, a vector $N \in \Z^n$ is feasible if and only if $N \in (\sum_{k \in \cro{K}} \Delta_k) \cap \Z^n$.
We have now to prove $\sum_{k \in \cro{K}} F_k = (\sum_{k \in \cro{K}} \Delta_k) \cap \Z^n$.
Because $F_k = \Delta_k \cap \Z^n$, the inclusion  $\sum_{k \in \cro{K}} F_k  \subset (\sum_{k \in \cro{K}} \Delta_k) \cap \Z^n$ is obvious.
Conversely, consider $N \in (\sum_{k \in \cro{K}} \Delta_k) \cap \Z^n$. Then, the set $\Delta_N = \{ (u_1, \dots, u_K) \in \Delta_1 \times \dots \times \Delta_K \mid \sum_{k=1}^K u_k = N \}$ is a non-empty polytope. 
A vector $u = (u_1, \dots, u_K)$ belongs to $\Delta_N$ if it satisfies the following constraints:
 \begin{align*}
\left\lbrace \begin{array}{l}
\forall k,i,\;  0 \leq u_{k}(i) \leq \mathds{1}_{i \in \mathcal{J}_k} \enspace ,\\
\forall k,\; \sum_{i}{u_{k}(i)}=R_{k} \enspace ,\\
\forall i,\; \sum_{k}{u_{k}(i)}=N_{i} \enspace .
\end{array} \right.
\end{align*},
that is $\Delta_N = \{ u \in \R^{Kn} \mid 0 \leq u \leq a, Au = b \}$, with $a \in \R^{Kn}$ such that $a_k(i) =\mathds{1}_{i \in \mathcal{J}_k}$ for every $i \in \cro{n}, k \in \cro{K}$, and $A \in \mathcal{M}_{K+n,Kn}(\Z)$ and $b \in \Z^{K+n}$ defined by:
\[ 
A = \left( \begin{array}{ccccccccccccc}
1 & 1 & ... & 1 & 0 & 0 & ... & 0 & ... & 0 & 0 & ... & 0  \\
0 & 0 & ... & 0 & 1 & 1 & ... & 1 & ... & 0 & 0 & ... & 0  \\
\multicolumn{13}{c}{...} \\
0 & 0 & ... & 0 & 0 & 0 & ... & 0 & ... & 1 & 1 & ... & 1  \\
-1 & 0 & ... & 0 & -1 & 0 & ... & 0 & ... & -1 & 0 & ... & 0  \\
0 & -1 & ... & 0 & 0 & -1 & ... & 0 & ... & 0 & -1 & ... & 0  \\
\multicolumn{13}{c}{...} \\
0 & 0 & ... & -1 & 0 & 0 & ... & -1 & ... & 0 & 0 & ... & -1  \\
\end{array} \right) \text{ and } b= \left( \begin{array}{c}
R_{1} \\
R_{2} \\
... \\
R_{K} \\
-N_{1} \\
-N_{2} \\
... \\
-N_{n} \\
\end{array} \right)
\]
By Poincaré's lemma, $A$ is totally unimodular. 
In particular, the extreme points of $\Delta_N$ are integer. 
Then, there exists $(u_1^*, \dots, u_K^*)$ with for every $k \in \cro{K}$, $u_k^* \in \Delta_k \cap \Z^n = F_k$ such that $N=\sum_k u_k^*$.
\end{proof}

Each vector $N \in \sum_k F_k$ can be written as sum of vectors $u_k^* \in F_k$ for $k \in \cro{K}$ such that there exists $y \in \R^n$ with $u_k^* \in \arg\max_{u_k \in F_k} \langle \rho_k+y,u_k \rangle$.
In order to determine such vectors $u_k^*$, we have the following lemma:

\begin{lemma}\label{lemma-infconv}
Let $N=\sum_{k} u_k^*$ with $u_{k}^* \in \Delta_{k}\; \forall k$. The following assertions are equivalent:
\begin{enumerate}
\item There exists $y \in \R^n$ such that for each $k \in \cro{K}$, $u_k^* \in \arg\max_{u_k \in \Delta_k} \langle \rho_k+y,u_k \rangle$.
\item The vectors $u_{1}^*,\dots,u_{K}^*$ realize the minimum in the inf-convolution $\psi$, i.e. $$\psi(N)=-\sum_{k}{\langle \rho_{k},u_{k}^* \rangle}$$.
\end{enumerate}
\end{lemma}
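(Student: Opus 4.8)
The plan is to recast both conditions as Fenchel--Young (in)equalities and to exploit the identity $\psi^* = \sum_k \varphi_k^*$ expressing the conjugate of an inf-convolution as the sum of the conjugates (already implicit in the proof of Lemma~\ref{lemma-realfeasible}). Recall from Corollary~\ref{cor-fenchel} that the value of the $k$-th low level problem is $\varphi_k^*(y)$, and from the argument in the proof of Lemma~\ref{lemma-realfeasible} that, for $u_k^* \in \Delta_k$, the membership $u_k^* \in \arg\max_{u_k \in \Delta_k}\langle \rho_k + y, u_k\rangle$ is equivalent to $u_k^* \in \partial \varphi_k^*(y)$, i.e.\ to the Fenchel equality
\[
\varphi_k(u_k^*) + \varphi_k^*(y) = \langle y, u_k^* \rangle \enspace .
\]
Since $u_k^* \in \Delta_k$ gives $\varphi_k(u_k^*) = -\langle \rho_k, u_k^* \rangle$, condition (2) reads exactly $\sum_k \varphi_k(u_k^*) = \psi(N)$.

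First I would prove $(1) \Rightarrow (2)$. Given $y$ as in (1), I sum the Fenchel equalities over $k$, using $\sum_k u_k^* = N$ and $\psi^*(y) = \sum_k \varphi_k^*(y)$, to obtain $\sum_k \varphi_k(u_k^*) = \langle y, N\rangle - \psi^*(y)$. The Fenchel--Young inequality for $\psi$ gives $\langle y, N\rangle - \psi^*(y) \le \psi(N)$, whereas the very definition of the inf-convolution gives the reverse bound $\sum_k \varphi_k(u_k^*) \ge \psi(N)$, since $(u_k^*)_{k}$ is one admissible decomposition of $N$. These two inequalities force equality, which is precisely (2).

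Next I would prove $(2) \Rightarrow (1)$. Because $N = \sum_k u_k^*$ with $u_k^* \in \Delta_k$, we have $N \in \sum_k \Delta_k = \dom \psi$, and since $\psi$ is polyhedral the subdifferential $\partial \psi(N)$ is nonempty (Rockafellar Th.~23.10, as already invoked in Lemma~\ref{lemma-realfeasible}). Picking any $y \in \partial \psi(N)$ yields $\psi(N) + \psi^*(y) = \langle y, N\rangle$. Substituting $\psi(N) = \sum_k \varphi_k(u_k^*)$ from (2), $\psi^*(y) = \sum_k \varphi_k^*(y)$, and $\langle y, N\rangle = \sum_k \langle y, u_k^* \rangle$, I obtain
\[
\sum_k \bigl[ \varphi_k(u_k^*) + \varphi_k^*(y) - \langle y, u_k^* \rangle \bigr] = 0 \enspace .
\]
Each bracket is nonnegative by Fenchel--Young, so each one vanishes; this is the Fenchel equality for every $k$, hence condition (1) with this very $y$.

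The potential obstacles are bookkeeping rather than conceptual. I must ensure that $\psi$ is a proper closed (polyhedral) convex function, so that the conjugate identity $\psi^* = \sum_k \varphi_k^*$ and the biconjugation $\psi^{**}=\psi$ hold, and that $N \in \dom \psi$, so that $\partial \psi(N) \ne \emptyset$; both are guaranteed by the setup, since the $\varphi_k$ are polyhedral (hence so is their inf-convolution $\psi$) and $N \in \sum_k \Delta_k$. The genuine crux is the ``nonnegative terms summing to zero'' step in $(2)\Rightarrow(1)$: it is what upgrades a single global Fenchel equality for $\psi$ into $K$ separate equalities, one per customer, thereby producing a common price $y$ that simultaneously makes each $u_k^*$ an optimal individual response.
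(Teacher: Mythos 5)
Your proof is correct and follows essentially the same route as the paper's: both directions rest on the identity $\psi^* = \sum_k \varphi_k^*$, the definition of the inf-convolution, and the nonemptiness of $\partial\psi(N)$ for the polyhedral function $\psi$ at $N \in \sum_k \Delta_k$. Your explicit Fenchel--Young bookkeeping (nonnegative brackets summing to zero) is just a cleaner phrasing of the paper's chain of equalities ending in $\sum_k \sup_{u_k \in \Delta_k}\langle \rho_k + y, u_k\rangle$.
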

\begin{proof}
(1) $\Rightarrow$ (2) : We have for every $k$:
\[
\langle \rho_{k}+y,u_{k}^* \rangle = \sup_{u_{k} \in \Delta_{k}} \left[ \langle \rho_{k}+y,u_{k} \rangle  \right] 
\]
By summing those equalities, we have:
\begin{align*}
\langle y,N \rangle + \sum_{k} \langle \rho_{k},u_{k}^* \rangle   & = \sum_{k} \sup_{u_{k} \in \Delta_{k}} \left[ \langle \rho_{k}+y,u_{k} \rangle  \right]  \\
& = \sup_{u_{1} \in \Delta_{1},\dots,u_{K} \in \Delta_{K}}\sum_{k}\left[ \langle \rho_{k}+y,u_{k} \rangle  \right] 
\end{align*}
By considering only the vectors $u_{1} \in \Delta_{1},\dots,u_{K} \in \Delta_{K}$ such that $\sum_{k} u_{k}=N$, we can write $\sum_{k} \langle \rho_{k},u_{k}^* \rangle= \underset{\substack{
u_{1} \in \Delta_{1},\dots,u_{K} \in \Delta_{K} \\
\sum_{k}{u_{k}}=N 
}}{\sup}\sum_{k} \langle \rho_{k},u_{k} \rangle$ which is exactly the second assertion.

(2) $\Rightarrow$ (1): The set $\partial \psi (N)$ is non-empty. Consider $y \in \partial \psi (N)$, that is $N \in \partial \psi^{*} (y)$. We can write:
\[
\forall N' \in \sum_{k}{\Delta_{k}}, \; \langle y,N \rangle - \psi (N) \geq \langle y,N' \rangle - \psi (N')
\]
So:
\begin{align*}
& \sum_{k}{\langle \rho_{k}+y,u_{k}^* \rangle}  = \langle y,N \rangle + \sum_{k}{\langle \rho_{k},u_{k}^* \rangle} = \langle y,N \rangle - \psi (N) \\ 
& = \sup_{N' \in \sum_{k}{\Delta_{k}}}{\left[ \langle y,N' \rangle - \psi (N')  \right] } \\
 & = \sup_{N' \in \sum_{k}{\Delta_{k}}}{\left[ \langle y,N' \rangle + \underset{\substack{
u_{1} \in \Delta_{1},\dots,u_{K} \in \Delta_{K} \\
\sum_{k}{u_{k}}=N' 
}}{\sup}\sum_{k}{ \langle \rho_{k},u_{k} \rangle}  \right] } \\
& = \sup_{u_{1} \in \Delta_{1},\dots,u_{K} \in \Delta_{K}}\sum_{k} \langle \rho_{k}+y,u_{k} \rangle   \\
& = \sum_{k} \underset{u_{k} \in \Delta_{k}}{\sup} \langle \rho_{k}+y,u_{k} \rangle  \\
\end{align*}

Consequently, if one $u_{k}^*$ is not an optimal solution of the low-level problem, the previous equality cannot be true. 
\end{proof}

The high-level problem of Problem~\ref{bilevel-telecom} consists in maximizing a function depending only on a vector $N$ which has to be a feasible vector. 
It is now possible to write the main theorem of this section, 
which establishes a decomposition method for solving Problem~\ref{bilevel-telecom}.

\begin{theorem}{\rm (Decomposition)} The bilevel problem~\ref{bilevel-telecom} can be solved as follows:
\begin{enumerate}
\item Find an optimal solution $N^{*}$ to the high level problem with unknown $N$:
\begin{align} \label{high-level}
\max_{N \in \sum_{k}{F_{k}}} & \sum_{i=1}^n {f_i(N_i)}  \; \;
\text{ s.t. } \forall i, \; N_{i} \leq N_{i}^{C} \enspace .
\end{align}
\item Find vectors $(u_1^*,\dots,u_K^*)$ solutions of the following problem:
\[
\max_{
\scriptstyle u_{1} \in F_{1},...,u_{K} \in F_{K} \atop
\scriptstyle
\sum_{k}{u_{k}}=N^{*} 
}
\sum_{k}{\langle \rho_{k},u_{k} \rangle}  \enspace .
\]
\item Find a vector $y^{*}$ such that $\forall k$, $u_{k}^{*}$ is a solution of the low level problem. 
\end{enumerate} \label{decomposition}
\end{theorem}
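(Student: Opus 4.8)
The plan is to verify each of the three steps in turn and show that together they produce an optimal pair $(y^*,N^*)$ for Problem~\ref{bilevel-telecom}. The starting observation is that the high-level objective $\sum_i f_i(N_i)$ depends on the price only through the induced traffic vector $N$, which by Definition~\ref{def-feasible} ranges exactly over the feasible vectors, subject to the capacity constraints $N_i\le N_i^C$. By Lemma~\ref{lemma-feasible}, a vector $N\in\Z^n$ is feasible if and only if $N\in\sum_k F_k$. Hence maximizing the bilevel objective over all admissible prices is the same as maximizing $\sum_i f_i(N_i)$ over $N\in\sum_k F_k$ with $N_i\le N_i^C$, which is precisely problem~\eqref{high-level}. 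This justifies Step~1 and shows that its optimal value equals the value of Problem~\ref{bilevel-telecom}; it then remains, for the maximizer $N^*$, to exhibit a price and customer responses realizing it.

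Next I would justify Step~2. The point $N^*$ lies in $\sum_k F_k\subset\sum_k\Delta_k$, so the polytope $\Delta_{N^*}=\{(u_1,\dots,u_K)\in\Delta_1\times\cdots\times\Delta_K\mid\sum_k u_k=N^*\}$ is non-empty. The objective $\sum_k\langle\rho_k,u_k\rangle$ is linear, so its maximum over $\Delta_{N^*}$ is attained at a vertex; by the total unimodularity argument already established in the proof of Lemma~\ref{lemma-feasible}, every vertex of $\Delta_{N^*}$ is integer, hence has all components $u_k\in\Delta_k\cap\Z^n=F_k$. Consequently the maximum of $\sum_k\langle\rho_k,u_k\rangle$ over $u_k\in F_k$ with $\sum_k u_k=N^*$ computed in Step~2 coincides with its maximum over $\Delta_{N^*}$. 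Recalling that $\psi$ is the inf-convolution of the functions $\varphi_k=-\langle\rho_k,\cdot\rangle+\chi_{\Delta_k}$, this common maximum equals $-\psi(N^*)$, so the vectors $(u_1^*,\dots,u_K^*)$ returned by Step~2 realize the minimum in the inf-convolution, i.e. $\psi(N^*)=-\sum_k\langle\rho_k,u_k^*\rangle$.

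Finally, Step~3 follows directly from Lemma~\ref{lemma-infconv}: since the $u_k^*$ realize the inf-convolution at $N^*$, assertion~(2) of that lemma holds, so by the implication (2)$\Rightarrow$(1) there exists $y^*\in\R^n$ with $u_k^*\in\arg\max_{u_k\in\Delta_k}\langle\rho_k+y^*,u_k\rangle$ for every $k$; by Lemma~\ref{lemma-delta} the $\arg\max$ over $\Delta_k$ agrees with the $\arg\max$ over $F_k$, so each $u_k^*$ is an optimal response of customer $k$ at the price $y^*$. Thus $(y^*,N^*)$ is feasible for Problem~\ref{bilevel-telecom} and achieves the optimal value found in Step~1, which proves its optimality. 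The main obstacle is the bridge in Step~2 between the integer optimization over the $F_k$ that one actually solves and the continuous inf-convolution over the $\Delta_k$ required to invoke Lemma~\ref{lemma-infconv}; this is exactly what the total unimodularity of the incidence matrix $A$ supplies, guaranteeing that the linear program over $\Delta_{N^*}$ admits an integral optimum and hence that the two values agree.
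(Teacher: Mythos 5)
Your proof is correct and follows essentially the same route as the paper's: reduce the bilevel problem to maximization over the feasible set characterized by Lemma~\ref{lemma-feasible}, identify Step~2 with the inf-convolution $\psi(N^*)$, and recover $y^*$ via the equivalence (2)$\Rightarrow$(1) of Lemma~\ref{lemma-infconv}. You actually spell out a point the paper leaves implicit --- that the integer program over the $F_k$ in Step~2 has the same value as the continuous inf-convolution over the $\Delta_k$, via the total unimodularity of $\Delta_{N^*}$ --- which is a welcome addition rather than a deviation.
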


\begin{proof}
The bilevel programming problem~\ref{bilevel-telecom} can be rewritten $\max_{N \text{ feasible}} \sum_i f_i(N_i)$ subject to $\forall i \in \cro{n}, \; N_i \leq N_i^C$. 
According to Lemma~\ref{lemma-feasible}, $N$ is feasible if and only if $N \in \sum_{k}{F_{k}}$.
So, a necessary condition for a vector $y^{*}$ to be an optimal solution of the bilevel problem is that for every $k$, there exists $u_{k}^{*} \in \partial \varphi_{k}^{*}(y^{*})$ such that $N^{*}=\sum_{k}{u_{k}^{*}}$ is an optimal solution of the problem:
\begin{align*}
\max_{N \in \sum_{k} F_{k}} & \sum_{i}f_{i}(N_{i}) \\
\text{ s.t. } & \forall i , \; N_{i} \leq N_{i}^{C}
\end{align*}

After finding $N^{*}$, it is possible to find $u_{k}^{*} \in \partial \varphi_{k}^{*}(y^{*})$ by solving the inf-convolution problem as a consequence of Lemma~\ref{lemma-infconv}.
Because $u_{k}^{*} \in \partial \varphi_{k}^{*}(y^{*})$ is equivalent to $y^{*} \in \partial \varphi_{k}(u_{k}^{*})$, each point of $\bigcap_{k} \; \partial \varphi_{k}(u_{k}^{*})$ is an optimal solution of the bilevel problem. 

\end{proof}

The second step of this theorem consists in solving a linear program. We next show that the third step reduces to a linear feasibility problem. 

\begin{lemma} \label{linear-feasibility}
Let $N \in \Z^n$ be a feasible vector and $u_k^* \in F_k$ ($k \in \cro{K}$) be vectors such that $N=\sum_k u_k^*$ and $\psi(N)=-\sum_k \langle \rho_k,u_k^* \rangle$. 
Then, the set of vectors $y^* \in \R^n$ such that $\forall k \in \cro{K}$, $u_k^* \in \arg\max_{u_k \in F_k} \langle \rho_k+y^*,u_k \rangle$ is non-empty and is the polytope defined by the following inequalities:
\[
\forall k \in \cro{K}, \; \forall i,j \notin \mathcal{J}_k, \text{ such that } u_k^*(i)=1,u_k^*(j)=0, \;  \rho_k(i)+y^*_i \geq \rho_k(j)+y^*_j
\]
\end{lemma}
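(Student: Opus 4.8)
The plan is to establish the two assertions separately: non-emptiness of the solution set, and its exact description by the stated inequalities.

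For non-emptiness, I would invoke Lemma~\ref{lemma-infconv}. By hypothesis the vectors $u_1^*,\dots,u_K^*$ realize the minimum in the inf-convolution, i.e. $\psi(N)=-\sum_k \langle \rho_k,u_k^* \rangle$, which is assertion~(2) of that lemma. Hence assertion~(1) holds: there exists $y \in \R^n$ such that $u_k^* \in \arg\max_{u_k \in \Delta_k} \langle \rho_k+y,u_k \rangle$ for every $k$. Since by Lemma~\ref{lemma-delta} the vertices of $\Delta_k$ are exactly $F_k$, and a linear functional attains its maximum over a polytope at a vertex, a vector $u_k^* \in F_k$ maximizes $\langle \rho_k+y,u_k \rangle$ over $\Delta_k$ if and only if it maximizes it over $F_k$. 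Thus this $y$ lies in the set in question, which is therefore non-empty.

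For the characterization, fix $y^* \in \R^n$ and examine a single low-level problem. Maximizing $\langle \rho_k+y^*,u_k \rangle$ over $u_k \in F_k$ amounts to choosing exactly $R_k$ indices outside $\mathcal{J}_k$ so as to maximize the sum of the coordinates $\rho_k(i)+y^*_i$ over the chosen indices; an optimal choice selects the $R_k$ largest such coordinates. I would show by an exchange argument that the fixed vector $u_k^*$, with selection set $S_k=\{i \mid u_k^*(i)=1\}$, is optimal if and only if every selected coordinate dominates every admissible unselected one, i.e. $\rho_k(i)+y^*_i \geq \rho_k(j)+y^*_j$ whenever $u_k^*(i)=1$, $u_k^*(j)=0$ and $i,j \notin \mathcal{J}_k$. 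The direction ($\Rightarrow$) is by contraposition: if some selected $i$ and admissible unselected $j \notin \mathcal{J}_k$ violate the inequality, swapping $i$ for $j$ yields a vector in $F_k$ with strictly larger objective, contradicting optimality of $u_k^*$. For the converse ($\Leftarrow$), given any competitor $u_k \in F_k$ with selection set $S$, I would observe that $S \setminus S_k$ and $S_k \setminus S$ have equal cardinality, since $|S|=|S_k|=R_k$, and pair them by an arbitrary bijection; applying the stated inequalities to each matched pair gives $\langle \rho_k+y^*,u_k^* \rangle \geq \langle \rho_k+y^*,u_k \rangle$, so $u_k^*$ is optimal over $F_k$.

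Intersecting these equivalent conditions over $k \in \cro{K}$ shows that $y^*$ renders every $u_k^*$ an optimal low-level response precisely when all the listed inequalities hold; as these are finitely many affine inequalities in $y^*$, the solution set is exactly the asserted polytope, and it is non-empty by the first paragraph. The main obstacle is the exchange argument: it carries the combinatorial content of the lemma and must correctly handle the structure of $F_k$ (a fixed number $R_k$ of active slots, with indices in $\mathcal{J}_k$ forbidden), in particular the bookkeeping of the bijection between $S \setminus S_k$ and $S_k \setminus S$. Everything else reduces to routine verification and the two already-established lemmas.
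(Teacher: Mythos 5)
Your proposal is correct and follows essentially the same route as the paper: non-emptiness via Lemma~\ref{lemma-infconv} (passing from $\Delta_k$ to $F_k$ through Lemma~\ref{lemma-delta}), necessity by a single-swap exchange, and sufficiency by pairing the equal-cardinality sets $S\setminus S_k$ and $S_k\setminus S$ and summing the inequalities. No gaps to report.
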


\begin{proof} 
According to Lemma~\ref{lemma-infconv}, there exists $y^{*} \in \bigcap_{k} \partial \varphi_{k}(u_{k}^{*})$. 
Hence, we have $\forall u_{k} \in F_{k}$, $\langle \rho_{k}+y^{*},u_{k}^{*} \rangle \geq \langle \rho_{k}+y^{*},u_{k} \rangle$. 

Consider indices $i,j \notin \mathcal{J}_{k}$ with $u_{k}^{*}(i)=1$, $u_{k}^{*}(j)=0$,
and the vector $u_{k}$ defined by $u_{k}(i)=0$, $u_{k}(j)=1$ and $\forall l \neq i,j, \; u_{k}(l)=u_{k}^{*}(l)$. 
We verify easily $u_{k} \in F_{k}$, so that the condition $\langle \rho_{k}+y^{*},u_{k}^{*} \rangle \geq \langle \rho_{k}+y^{*},u_{k} \rangle$,
which can be rewritten $\rho_{k}(i)+y^{*}_{i} \geq \rho_{k}(j)+y^{*}_{j}$,
is satisfied.

Moreover, this condition is sufficient. Consider $y^{*}$ such that $\forall i,j \notin \mathcal{J}_{k}$ with $u_{k}^{*}(i)=1$, $u_{k}^{*}(j)=0$, we have $\rho_{k}(i)+y^{*}(i) \geq \rho_{k}(j)+y^{*}(j)$.
Consider $u_{k} \in F_{k}$. 
By definition of $F_{k}$, the quantity $\langle \rho_{k}+y^{*},u_{k} \rangle$ corresponds to the sum of $R_{k}$ coordinates of $\rho_{k}+y^{*}$ for which the index is not in $\mathcal{J}_{k}$.
Hence,
\begin{align*}
\langle \rho_{k}+y^{*},u_{k} \rangle & = \sum_{i, u_{k}(i)=1, u_{k}^{*}(i)=1} \left( \rho_{k}(i)+y^{*}_{i} \right) + \sum_{j, u_{k}(j)=1, u_{k}^{*}(j)=0} \left(\rho_{k}(j)+y^{*}_{j} \right) \\
& \leq \sum_{i, u_{k}(i)=1, u_{k}^{*}(i)=1} \left( \rho_{k}(i)+y^{*}_{i} \right) + \sum_{j, u_{k}(j)=0, u_{k}^{*}(j)=1} \left(\rho_{k}(j)+y^{*}_{j} \right) = \langle \rho_{k}+y^{*},u_{k} \rangle
\end{align*}
because of the lemma hypothesis and 
because $\# \{j|u_{k}(j)=1, u_{k}^{*}(j)=0 \} = \# \{j|u_{k}(j)=0, u_{k}^{*}(j)=1 \}$

\end{proof}

For every $k$, the latter inequalities define a polytope, and we have to find $y^{*}$ in the intersection of all these polytopes. 

\section{A first algorithm} \label{sec-first}

In this section, we explain how the decomposition method provided by Theorem~\ref{decomposition} leads to a polynomial time algorithm for solving Problem~\ref{bilevel-telecom}.
We will use some elements of discrete convexity developed by Danilov, Koshevoy \cite{danilov2004discrete} and Murota \cite{murota2003discrete}, that we recall first. 
We next explain how to solve Problem~\ref{bilevel-telecom}.

An integer set $B \subset \Z^{n}$ is
{\em $M$-convex}~\cite[Ch.\ 4, p.101]{murota2003discrete} if
$
\forall x,y \in B, \forall i\in [n]$ such that
$x_{i}>y_{i}, \exists j\in [n]$ such that
$x_{j}<y_{j}$,
$x-e_{i}+e_{j} \in B$
and
$y+e_{i}-e_{j} \in B$,
where $e_{i}$ is the $i$-th vector of the canonical basis in $\R^{n}$. 
\begin{lemma} \label{feasibledomain}
The feasible domain of the high-level program $$B=\{ N \in \sum_{k}{F_{k}} | \forall i\; N_{i} \leq N_{i}^{C} \}$$ is a $M$-convex set
of $\Z^{n}$.
\end{lemma}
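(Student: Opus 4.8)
The plan is to verify the exchange axiom defining $M$-convexity directly, after first establishing it for the Minkowski sum $\sum_k F_k$ alone and then checking that the upper-bound constraints $N_i \le N_i^C$ do not destroy it. The structural fact I will use throughout is that every vector of $\sum_k F_k$ has the same coordinate sum $\sum_i N_i = \sum_k R_k$, so that $B$ lives inside a fixed hyperplane; this is what makes the single-coordinate exchange $N \mapsto N - e_i + e_j$ the natural move to consider.

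First I would show that each $F_k$ is $M$-convex. By Lemma~\ref{lemma-delta}, $F_k$ is the set of $0$--$1$ vectors with exactly $R_k$ ones and support disjoint from $\mathcal{J}_k$; these are precisely the incidence vectors of the bases of the uniform matroid of rank $R_k$ on $\cro{n} \setminus \mathcal{J}_k$ (the indices in $\mathcal{J}_k$ being loops). The exchange axiom is immediate here: if $x,y \in F_k$ and $x_i > y_i$, then $x_i=1$, $y_i=0$, and since $\sum_\ell x_\ell = \sum_\ell y_\ell = R_k$ there is an index $j$ with $x_j=0$, $y_j=1$; then $i,j \notin \mathcal{J}_k$, so both $x-e_i+e_j$ and $y+e_i-e_j$ remain $0$--$1$ vectors with $R_k$ ones supported outside $\mathcal{J}_k$, hence lie in $F_k$. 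Next, $\sum_k F_k$ is $M$-convex because the class of $M$-convex sets is closed under Minkowski sum \cite{murota2003discrete} (equivalently, the indicator of a sum is the infimal convolution of the indicators, and infimal convolution preserves $M$-convexity); alternatively one may invoke the identity $\sum_k F_k = (\sum_k \Delta_k) \cap \Z^n$ proved in Lemma~\ref{lemma-feasible}, exhibiting $\sum_k F_k$ as the set of integer points of an integral base polytope.

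The main step, and the one I expect to be the crux, is to check that intersecting with the box $\{N \mid N_i \le N_i^C \ \forall i\}$ preserves $M$-convexity: this is false for intersections with general half-spaces, and works here only because the bounds are coordinatewise and the total sum is frozen. Take $x,y \in B$ and $i$ with $x_i > y_i$. Applying $M$-convexity of $\sum_k F_k$ yields an index $j$ with $x_j < y_j$ such that $x' := x - e_i + e_j$ and $y' := y + e_i - e_j$ both lie in $\sum_k F_k$. It then remains to verify the upper bounds. For $x'$ only coordinate $j$ increases, and $x'_j = x_j + 1 \le y_j \le N_j^C$ since $x_j < y_j$ and $y \in B$, while the decreased coordinate $i$ still satisfies $x'_i < x_i \le N_i^C$. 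Symmetrically, for $y'$ only coordinate $i$ increases, and $y'_i = y_i + 1 \le x_i \le N_i^C$ since $y_i < x_i$ and $x \in B$, while $y'_j < y_j \le N_j^C$. Hence $x',y' \in B$, which is exactly the exchange axiom for $B$.

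Putting these together shows that $B$ is $M$-convex. The only delicate point is the compatibility of the box constraints with the exchange move; once one observes that whenever a coordinate must be incremented there is automatically room for it (precisely because the two vectors disagree in the favourable direction at that coordinate), the remaining verification is routine.
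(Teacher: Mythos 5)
Your proof is correct and follows essentially the same route as the paper's: establish the exchange axiom for each $F_k$ directly, invoke closure of $M$-convexity under Minkowski sums for $\sum_k F_k$, and then check that the capacity constraints survive the exchange using precisely the inequality $x_j+1\le y_j\le N_j^C$ (resp.\ $y_i+1\le x_i\le N_i^C$). The matroid-basis interpretation of $F_k$ and the alternative justification via Lemma~\ref{lemma-feasible} are pleasant additions but do not change the argument.
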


\begin{proof}
We can check easily that $\forall k$, the set $F_{k}$ is $M$-convex.
Taking two different vectors $u_{k}$ and $v_{k}$ in $F_{k}$, there exist $i,j$ such that $u_{k}(i)=1, v_{k}(i)=0$ and $u_{k}(j)=0, v_{k}(j)=1$. These indices $i,j$ do not belong to $\mathcal{J}_{k}$. The vectors $u_{k}-e_{i}+e_{j}$ and $v_{k}+e_{i}-e_{j}$ have coordinates in $\{ 0,1 \}$ with a sum equal to $R_{k}$ and all coordinates in $\mathcal{J}_{k}$ equal to 0. 

It is known that a Minkowski sum of $M$-convex sets is $M$-convex
\cite[Th.\ 4.23, p.115]{murota2003discrete}, and so
the set $\sum_{k}{F_{k}}$ is $M$-convex.

Finally, consider two vectors $N$ and $N'$ of $B$. They belong to $\sum_{k}{F_{k}}$, so for each $i$ with $N_{i} > N'_{i}$, we can find $j$ with $N_{j} < N'_{j}$ such that $N-e_{i}+e_{j}$ and $N'+e_{i}-e_{j}$ are in $\sum_{k}{F_{k}}$. The $i$-th coordinate of $N-e_{i}+e_{j}$ is $N_{i}-1 < N_{i} \leq N_{l}^{C}$ and the $j$-th coordinate of $N-e_{i}+e_{j}$ is $N_{j}+1 \leq N'_{j} \leq N_{j}^{C}$. So $N-e_{i}+e_{j} \in B$ and similarly $N'+e_{i}-e_{j} \in B$, which proves the $M$-convexity of $B$.

\end{proof} 

A function $g: \Z^n \mapsto \R$ is $M$-convex \cite[ch.\ 6.1, p.133]{murota2003discrete} if $\forall x,y \in \Z^n$ such that $g(x)$ and $g(y)$ are finite real values, $\forall i \in \cro{n}$ such that $x_i > y_i$, 
$\exists j \in \cro{n}$ such that $x_j < y_j$ and the following condition holds true:
\[
g(x)+g(y) \geq g(x-e_i+e_j)+g(y+e_i-e_j)
\]
A function $g$ is $M$-concave if $-g$ is $M$-convex.
It follows from this definition that if $B$ is a $M$-convex set, then $\chi_B$ is a $M$-convex function 
(we recall that $\chi_B: \Z^n \mapsto \R$ is defined by $\chi_B(x)=0$ if $x \in B$ and $\chi_B(x)=+ \infty$ otherwise).
An important property of $M$-convex functions is that local optimality guarantees global optimality \cite[Th.\ 6.26, p.148]{murota2003discrete} in the following sense. 
Let $g$ be a $M$-convex function and $x \in \Z^n$. 
Then $g(x)=\min_{y \in \Z^n} g(y)$ if and only if $\forall i,j \in \cro{n}, \; g(x) \leq g(x-e_i+e_j)$. 

According to Theorem~\ref{decomposition}, we have to solve $\max_{N \in \Z^n} f(N) - \chi_B(N)$, where $f: N \mapsto \sum_i f_i(N_i)$ is a separable concave function, and $B$ is the $M$-convex set introduced in Lemma~\ref{feasibledomain}.
The function $f-\chi_B$ is $M$-concave \cite[Th.\ 6.13.(4), p.143]{murota2003discrete}. 
Then, we have the following result as a direct consequence of \cite[Th.\ 6.26, p.148]{murota2003discrete} :
\begin{theorem} 
Let $N^{*}\in B$. 
Then, $N^*$ is a maximum point of $f$ over $B$
if and only if
$\forall i,j\in \cro{n}$ such that $N^{*}-e_{i}+e_{j} \in B, f(N^{*}-e_{i}+e_{j}) \leq f(N^{*})$.
\end{theorem}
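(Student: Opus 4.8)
The plan is to reduce the statement to Murota's local-to-global optimality criterion for $M$-convex functions, recalled just above, applied to the function $g := \chi_B - f$. Since $f - \chi_B$ was shown to be $M$-concave, its negative $g = -(f-\chi_B)$ is $M$-convex, so the criterion is applicable. The first step is to observe that maximizing $f$ over $B$ is the same as minimizing $g$ over the whole lattice $\Z^n$: indeed $g$ equals $-f$ on $B$ and takes the value $+\infty$ outside $B$, so any minimizer of $g$ over $\Z^n$ necessarily lies in $B$ and maximizes $f$ there, and conversely any maximizer of $f$ over $B$ minimizes $g$ over $\Z^n$.

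Next I would invoke the criterion: $N^{*}$ minimizes $g$ over $\Z^n$ if and only if $g(N^{*}) \leq g(N^{*} - e_i + e_j)$ for all $i,j \in \cro{n}$. It then only remains to rewrite this local condition in terms of $f$ and $B$, and I would do this by splitting into two cases according to whether the perturbed point $N^{*} - e_i + e_j$ belongs to $B$.

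For the case analysis: if $N^{*} - e_i + e_j \notin B$, then $g(N^{*} - e_i + e_j) = +\infty$, so the inequality $g(N^{*}) \leq g(N^{*} - e_i + e_j)$ holds automatically and these pairs impose no constraint. If $N^{*} - e_i + e_j \in B$, then (recalling $N^{*}\in B$) both sides are finite, equal to $-f$ evaluated at the respective points, and the inequality becomes $-f(N^{*}) \leq -f(N^{*} - e_i + e_j)$, that is $f(N^{*} - e_i + e_j) \leq f(N^{*})$. Collecting the two cases, global optimality of $N^{*}$ is equivalent precisely to requiring $f(N^{*} - e_i + e_j) \leq f(N^{*})$ for all $i,j \in \cro{n}$ with $N^{*} - e_i + e_j \in B$, which is the asserted condition.

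Since each step is a direct translation, there is no genuine obstacle here; the only points requiring care are the bookkeeping of sign changes (passing from the maximization of the concave separable $f$ to the minimization of the $M$-convex $g$, using that $f-\chi_B$ is $M$-concave) and the correct handling of those pairs $(i,j)$ for which the exchange $N^{*}-e_i+e_j$ leaves $B$, where the value $+\infty$ of $\chi_B$ renders Murota's inequality trivially satisfied.
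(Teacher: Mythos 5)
Your proposal is correct and follows the same route as the paper, which simply states the theorem as a direct consequence of Murota's local optimality criterion applied to the $M$-concave function $f-\chi_B$; you merely spell out the sign change to the $M$-convex $g=\chi_B-f$ and the trivial handling of exchanges leaving $B$, which the paper leaves implicit.
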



Moreover, Murota (\cite{murota2003discrete}, ch.10, p.281) gives an algorithm which runs in pseudo-polynomial time to minimize $M$-convex functions (see Algorithm~\ref{algo1}).
\begin{algorithm}
\begin{enumerate}
\item Find $x \in \Z^n$ such that $g(x) < +\infty$;
\item Find $i,j\in \arg\min_{k,l \in \cro{n}} g(x-e_k+e_l)$;
\item If $g(x-e_{i}+e_{j}) \geq g(x)$ then stop ($x$ is a global minimizer of $g$);
\item Else $x:=x-e_{i}+e_{j}$ and go back to Step 2;
\end{enumerate}
\caption{Murota's greedy algorithm to minimize a $M$-convex function $g$.} \label{algo1}
\end{algorithm}

By adding a priority rule in Step 2 of Algorithm~\ref{algo1} in the case where $\argmin_{k,l \in \cro{n}} f(x-e_k+e_l)$ is not reduced to a single point, 
a global minimizer of $f$ is obtained by Algorithm~\ref{algo1}in pseudo-polynomial time. 

\begin{proposition}[\cite{murota2003discrete}, Prop.10.2] \label{comp-algogreedy}
Assume that $\dom f$ is bounded. Let $F$ be the number of arithmetic operations needed to evaluate $f$ and $K_1 = \max( ||x-y||_1 \mid x,y \in \dom f)$. 
Then, if a vector in $\dom f$ is given, Algorithm~\ref{algo1} finds a global minimizer of $f$ in $O(Fn^2K_1)$ time. 
\end{proposition}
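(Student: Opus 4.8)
The plan is to separate the complexity estimate into two independent parts: the cost of a single iteration of Algorithm~\ref{algo1}, and the total number of iterations performed. Multiplying the two yields the announced bound. I would first dispatch correctness: by the local-to-global optimality property of $M$-convex functions recalled above (\cite[Th.\ 6.26, p.148]{murota2003discrete}), the stopping test in Step~3 is met precisely when $f(x) \leq f(x-e_i+e_j)$ for all pairs $(i,j)$, which is exactly the condition characterizing a global minimizer; hence, whenever the algorithm halts, it returns a correct answer, and it remains only to show that it halts within the claimed number of steps.

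For the per-iteration cost, Step~2 scans the $n^2$ candidate directions $-e_k + e_l$ with $k,l \in \cro{n}$, evaluating $f$ at each of the points $x - e_k + e_l$; since one evaluation of $f$ takes $F$ arithmetic operations and selecting the minimum among the $n^2$ resulting values costs $O(n^2)$ comparisons, each pass through Steps~2--4 costs $O(Fn^2)$. Thus the whole running time is $O(Fn^2)$ times the number of iterations.

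The heart of the argument --- and the step I expect to be the main obstacle --- is to bound the number of iterations by $O(K_1)$. The strategy is to exhibit a fixed global minimizer $x^*$ toward which the iterates move monotonically in $\ell_1$ distance. Concretely, let $x^*$ be a minimizer of $f$ minimizing $\|x^{(0)} - x^*\|_1$ among all minimizers, where $x^{(0)} \in \dom f$ is the starting point. At a current non-optimal iterate $x$, the $M$-convexity exchange property applied to the pair $(x,x^*)$ provides, for each coordinate $i$ with $x_i > x^*_i$, an index $j$ with $x_j < x^*_j$ such that both $x - e_i + e_j$ and $x^* + e_i - e_j$ lie in $\dom f$ and $f(x) + f(x^*) \geq f(x - e_i + e_j) + f(x^* + e_i - e_j)$. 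One then argues, using the minimality of $x^*$ (so that $f(x^* + e_i - e_j) \geq f(x^*)$) together with the steepest-descent choice of $(i,j)$ in Step~2 and the tie-breaking priority rule, that the direction selected by the algorithm can be taken to satisfy $x_i > x^*_i$ and $x_j < x^*_j$; consequently the update $x \mapsto x - e_i + e_j$ decreases $\|x - x^*\|_1$ by exactly $2$ while $x^*$ remains a valid (and still nearest) target. Since $\|x^{(0)} - x^*\|_1 \leq K_1$ by the definition of $K_1$ and the finiteness of $f$ at both points, the process terminates after at most $K_1/2$ iterations.

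Combining the three ingredients, the algorithm halts at a global minimizer after $O(K_1)$ iterations, each costing $O(Fn^2)$, for a total running time of $O(Fn^2 K_1)$, as claimed. The delicate point throughout is the monotone-distance lemma of the third paragraph: one must verify that the greedy direction genuinely reduces the distance to a single, fixed optimal target, which is where the exchange axiom of $M$-convexity and the tie-breaking rule introduced in Step~2 are essential; the boundedness of $\dom f$ guarantees that $K_1$ is finite and hence that termination is effective.
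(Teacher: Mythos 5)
First, a point of reference: the paper does not prove this proposition at all --- it is quoted from Murota's monograph (\cite{murota2003discrete}, Prop.~10.2) and used as a black box, so there is no in-paper argument to compare yours against. Judged on its own terms, your proposal follows the standard textbook route: correctness from the local-to-global optimality criterion for $M$-convex functions, a per-iteration cost of $O(Fn^2)$ coming from the $n^2$ evaluations of $f$ in Step~2, and an iteration count of $O(K_1)$ from a potential-function argument based on $\ell_1$ distance to a minimizer. The first two ingredients are correct as written.

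The gap is in the third ingredient, which you rightly flag as the crux but do not actually close. The exchange-property computation you sketch --- applying $M$-convexity to the pair $(x,x^*)$ and using $f(x^*+e_i-e_j)\geq f(x^*)$ --- only shows that \emph{some} pair $(i,j)$ with $x_i>x^*_i$ and $x_j<x^*_j$ satisfies $f(x-e_i+e_j)\leq f(x)$, i.e., that a non-ascending step toward $x^*$ exists. It does not show that the pair actually selected in Step~2, namely a global minimizer of $f(x-e_k+e_l)$ over all $(k,l)$, is of this form; a priori the steepest-descent direction could move away from $x^*$. Closing this requires the minimizer-cut property for $M$-convex functions (the nontrivial lemma underlying Murota's proof), which is a separate statement that must be invoked or proved, not a consequence of the single exchange you perform. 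In addition, your formulation fixes one target minimizer $x^*$ (nearest to $x^{(0)}$) for the entire run and asserts monotone convergence to that fixed point; what the standard argument delivers, and what suffices, is the weaker statement that the distance to the \emph{set} of minimizers drops by $2$ at every iteration, with the witnessing minimizer allowed to change from step to step. Since that distance is at most $K_1$ initially, the bound of at most $K_1/2$ iterations, hence $O(Fn^2K_1)$ total time, still follows. So your architecture is the right one, but the step you yourself identify as the heart of the argument is asserted rather than proved, and is stated in a form stronger than the exchange axiom alone justifies.
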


However, the minimization of a $M$-convex function can be achieved in polynomial time. 

\begin{proposition}[\cite{murota2003discrete}, Prop.10.4] \label{comp-algomurota}
Assume that $\dom f$ is bounded. Let $F$ be the number of arithmetic operations needed to evaluate $f$ and $K_\infty = \max( ||x-y||_\infty \mid x,y \in \dom f)$. 
Then, if a vector in $\dom f$ is given, a global minimizer of $f$ can be found in $O(Fn^3 \log^2(K_\infty/n))$ time. 
\end{proposition}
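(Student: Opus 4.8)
The statement is Murota's Proposition~10.4, so the plan is to reconstruct the polynomial-time routine behind it by combining a \emph{domain-scaling} scheme with a \emph{proximity theorem}. I may assume everything recalled earlier, in particular the local-to-global optimality criterion (that $x$ minimizes an $M$-convex function $g$ iff $g(x)\le g(x-e_i+e_j)$ for all $i,j$), which makes the greedy descent of Algorithm~\ref{algo1} correct but only pseudo-polynomial as in Proposition~\ref{comp-algogreedy}; its slowness comes from taking unit steps on a domain whose $\ell_\infty$-diameter is $K_\infty$. To accelerate it I would introduce a scale parameter $\alpha\ge 1$ and call $x\in\dom f$ an \emph{$\alpha$-local minimizer} if $f(x)\le f(x-\alpha e_i+\alpha e_j)$ for all $i\ne j$. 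The scaled descent starts from a given point and repeatedly moves to a strictly improving scale-$\alpha$ neighbour $x-\alpha e_i+\alpha e_j$ until none exists. Because $f$ strictly decreases at each move and all iterates stay in the finite set $\dom f\cap(x_0+\alpha\Z^n)$, this terminates at an $\alpha$-local minimizer; crucially, I do \emph{not} need the restriction of $f$ to the scaled lattice to be $M$-convex for this, since termination and the characterization of the output follow only from finiteness and strict descent.

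The heart of the argument is a proximity theorem: if $x$ is an $\alpha$-local minimizer of the $M$-convex $f$, then some global minimizer $x^*$ satisfies $\|x-x^*\|_\infty\le (n-1)\alpha$. I would prove this by choosing, among all global minimizers, one minimizing $\|x-x^*\|_1$, and then, assuming for contradiction that some coordinate gap exceeds $(n-1)\alpha$, applying the $M$-convex exchange property to the pair $(x,x^*)$ to produce a swap $x\mapsto x-\alpha e_i+\alpha e_j$ and $x^*\mapsto x^*+\alpha e_i-\alpha e_j$ that does not increase $f(x)$ (by $\alpha$-local minimality), does not increase $f(x^*)$, and strictly decreases $\|x-x^*\|_1$, contradicting the minimality of the choice of $x^*$. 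This is the \textbf{main obstacle}: the exchange property is stated for unit moves, so one must telescope unit exchanges into a single scale-$\alpha$ move and control the accumulated displacement across the $(n-1)$ steps, which is exactly the delicate technical content.

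With proximity available, the algorithm is a cascade of phases with geometrically decreasing scales $\alpha=2^{\lceil\log_2(K_\infty/n)\rceil},\dots,2,1$, each running the scaled descent from the output of the previous phase. When $\alpha=1$, an $\alpha$-local minimizer is, by the recalled local-to-global criterion, a genuine global minimizer, so the last phase returns the answer. The reason the cascade is efficient is that the phase at scale $2\alpha$ ends at a $(2\alpha)$-local minimizer which, by the proximity theorem, lies within $\ell_\infty$-distance $2(n-1)\alpha$ of some scale-$\alpha$ optimum; hence the next descent starts already close to its target.

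Finally I would assemble the complexity. There are $O(\log(K_\infty/n))$ phases. Within a phase, the proximity radius inherited from the previous scale bounds how far the start is from a scale-$\alpha$ optimum and therefore bounds the number of scaled descent steps, while each step scans the $\binom{n}{2}$ candidate pairs $(i,j)$ at cost $O(F)$ per evaluation, i.e.\ $O(Fn^2)$ per step. Multiplying the per-phase step count by the per-step $O(Fn^2)$ cost and by the $O(\log(K_\infty/n))$ phases yields the claimed $O(Fn^3\log^2(K_\infty/n))$. Besides the proximity theorem, the remaining subtle point is the tight bookkeeping of the number of descent steps per phase, which is where the second logarithmic factor enters.
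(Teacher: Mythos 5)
The paper gives no proof of this proposition at all: it is quoted verbatim from Murota's monograph (Prop.~10.4 there) and used as a black box, so there is no internal argument to compare against. Your reconstruction follows the right family of ideas (scaling phases plus an $M$-proximity theorem), which is indeed how the literature proves bounds of this type, and you are also right that termination of the scaled descent needs only finiteness and strict decrease, not $M$-convexity of the restriction of $f$ to the coarse lattice.

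There are, however, two genuine gaps. First, the proximity theorem is the load-bearing ingredient, and you only gesture at its proof: the exchange axiom gives unit swaps, and telescoping $\alpha$ of them into a single move $x\mapsto x-\alpha e_i+\alpha e_j$ while keeping control of both $f(x)$, $f(x^*)$ and the $\ell_1$-distance is exactly the part that can fail if done naively (the intermediate points need not stay comparable, and the index $j$ supplied by the exchange axiom can change from one unit step to the next). You correctly flag this as the main obstacle, but flagging it does not discharge it. Second, and more seriously, your complexity accounting does not actually produce the stated bound. From the proximity theorem, the start of the phase at scale $\alpha$ lies within $\ell_\infty$-distance $O(n\alpha)$ of a global minimizer, hence within $\ell_1$-distance $O(n^2\alpha)$; since each scaled descent step changes the iterate by $\ell_1$-distance $2\alpha$, the only bound your argument yields is $O(n^2)$ steps per phase, i.e.\ $O(Fn^2\cdot n^2)=O(Fn^4)$ per phase and $O(Fn^4\log(K_\infty/n))$ overall --- not $O(Fn^3\log^2(K_\infty/n))$. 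The second logarithm does not ``enter from tight bookkeeping'' of plain steepest descent; in the actual proof each scaling phase is solved by a different subroutine (minimizing a modified $M$-convex function supported on a box of $\ell_\infty$-size $O(n)$, or a domain-reduction procedure), and that is where both the $n^3$ and the extra $\log$ come from. As written, your outline proves a weaker complexity bound than the one claimed.
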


The different algorithms developed by Murota \cite[Section 10.1]{murota2003discrete} provide a minimizer of a $M$-convex function in polynomial time, 
if an initial point is given and if the domain of the function is bounded. 
Whereas it is trivial to find a vector of $\Z^n$ such that $\forall_i, N_i \leq N_i^C$ or a vector $N$ belonging to $\sum_k F_k$, 
it is not obvious to find one satisfying both conditions.
In fact, such a point can be obtained by solving the minimization problem:
\[
\min_{N \in \sum_k F_k} \sum_i \max(N_i- N_i^C,0)
\]  
The condition $N \in B$ is equivalent to $N \in \argmin_{N \in \sum_k F_k} \sum_i \max(N_i- N_i^C,0)$ if $B$ is non-empty. 
The function $N \mapsto  \sum_i \max(N_i- N_i^C,0)$ is separable convex. 
Then, the function $N \mapsto  \sum_i \max(N_i- N_i^C,0) + \chi_{\sum_k F_k}$ is $M$-convex according to \cite[Th.\ 6.13.(4), p.148]{murota2003discrete}.
Because $\sum_k F_k$ is bounded and a point in $\sum_k F_k$ can be obtained in $O(Kn)$ operations by summing vectors taken in each set $F_k$, 
it is possible to find a point $N^0 \in \argmin_{N \in \sum_k F_k} \sum_i \max(N_i- N_i^C,0) = B$ in polynomial time, by Proposition~\ref{comp-algomurota}.

We can finally write the following result about the complexity of the decomposition method given by Theorem~\ref{decomposition}.

\begin{theorem} \label{th-compdecgeneral}
Let $R = \sum_k R_k$, for every $k \in \cro{K}$, $n_k = n - \# \mathcal{J}_k$ and $\overline{R}=\sum_k R_k(n_k-R_k)$.
An optimal solution of Problem~\ref{bilevel-telecom} can be obtained in $O((Kn)^{3.5} L n^3 \log^2 (K/n) + (n+\overline{R})^{3.5}L)$ arithmetic operations,
where $L$ is the input size of the bilevel problem. 
\end{theorem}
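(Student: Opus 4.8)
The plan is to account for the arithmetic cost of each of the three steps in the decomposition of Theorem~\ref{decomposition}, and to combine them. The overall bound has two summands, $O((Kn)^{3.5} L n^3 \log^2(K/n))$ and $O((n+\overline{R})^{3.5}L)$, and the structure of the proof should make clear which summand comes from which step. I expect the first summand to arise from Step~1 (minimizing an $M$-convex function to find $N^*$ via Murota's polynomial algorithm, Proposition~\ref{comp-algomurota}), combined with the preliminary computation of an initial feasible point in $B$; the second summand should arise from Steps~2 and~3, which are both linear programs whose sizes I must estimate.

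\textbf{Step 1.} First I would invoke Proposition~\ref{comp-algomurota}, whose running time is $O(F n^3 \log^2(K_\infty/n))$, where $F$ is the cost of one function evaluation and $K_\infty$ is the $\ell_\infty$-diameter of the domain. Here the function is $f - \chi_B$, the domain is the bounded set $B \subset \sum_k F_k$, and since each $u_k \in F_k$ has entries in $\{0,1\}$ with $\sum_i u_k(i) = R_k \le n$, the coordinates of $N = \sum_k u_k$ range in $[0,K]$, so $K_\infty = O(K)$, giving the $\log^2(K/n)$ factor. The subtle point is that the evaluation of $\chi_B(N)$ itself requires testing membership $N \in \sum_k F_k$, which is the ``critical step'' the introduction flags as a Minkowski-sum membership problem; evaluating it requires solving a feasibility LP of the form $\{u \in \R^{Kn} \mid 0 \le u \le a,\ Au = b\}$ described in Lemma~\ref{lemma-feasible}, of dimension $Kn$, whose cost by an interior-point / combinatorial LP solver is $O((Kn)^{3.5} L)$. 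Multiplying this evaluation cost $F = O((Kn)^{3.5} L)$ by the $n^3 \log^2(K/n)$ iteration factor yields precisely the first summand. I would also note that the preliminary phase — finding an initial point $N^0 \in B$ by minimizing $\sum_i \max(N_i - N_i^C, 0) + \chi_{\sum_k F_k}$, as described just before the theorem — has the same asymptotic cost and is therefore absorbed.

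\textbf{Steps 2 and 3.} For Step~2 I would argue that, once $N^*$ is fixed, solving $\max \sum_k \langle \rho_k, u_k\rangle$ over $\{u_k \in F_k,\ \sum_k u_k = N^*\}$ is a single linear program over the polytope $\Delta_{N^*}$ of Lemma~\ref{lemma-feasible}, whose constraint matrix $A$ is totally unimodular, so an optimal solution is integral and attained by an LP solver. The relevant size parameter is the number of nonzero decision variables: customer $k$ contributes only at the $n_k = n - \#\mathcal{J}_k$ indices outside $\mathcal{J}_k$, and the effective combinatorial complexity of $F_k$ is captured by $R_k(n_k - R_k)$, whence the aggregate $\overline{R} = \sum_k R_k(n_k - R_k)$ together with the $n$ coupling constraints gives an LP of size $O(n + \overline{R})$; solving it costs $O((n+\overline{R})^{3.5} L)$. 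For Step~3 I would invoke Lemma~\ref{linear-feasibility}, which shows the admissible price vectors $y^*$ form a non-empty polytope cut out by the inequalities $\rho_k(i) + y^*_i \ge \rho_k(j) + y^*_j$ over pairs with $u_k^*(i)=1, u_k^*(j)=0$; finding one feasible $y^*$ is again a linear feasibility problem, of the same $O(n+\overline{R})$ size and hence the same cost, so it is absorbed into the second summand.

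\textbf{The main obstacle} I anticipate is Step~1's interaction between the outer $M$-convex minimization and the inner membership oracle: one must justify that each of the $O(n^3 \log^2(K/n))$ outer iterations genuinely requires only one (or a bounded number of) membership test(s), each costing an LP solve over the $Kn$-dimensional Minkowski-sum description, so that the costs multiply rather than compound. I would make this precise by pinning down exactly how many evaluations of $f - \chi_B$ Murota's scaling algorithm performs per iteration and confirming the oracle is the dominant per-evaluation cost; the totally unimodular structure of $A$ (Lemma~\ref{lemma-feasible}) guarantees correctness of the LP-based membership test against $\sum_k F_k = (\sum_k \Delta_k) \cap \Z^n$, which is what licenses using the continuous LP as an exact integer oracle.
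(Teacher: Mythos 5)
Your overall plan coincides with the paper's: Step~1 (including the search for an initial point of $B$) is charged at $O((Kn)^{3.5}L\, n^3 \log^2(K/n))$ by running Murota's polynomial algorithm (Proposition~\ref{comp-algomurota}) with a membership oracle for $\sum_k F_k$ implemented as a $Kn$-dimensional linear feasibility problem, exactly as in the paper, and your discussion of why the oracle cost multiplies the iteration count is sound. Your treatment of Step~3 is also essentially the paper's: Lemma~\ref{linear-feasibility} gives a feasibility problem in the $n$ price variables with one inequality per pair $(i,j)\notin\mathcal{J}_k$ such that $u_k^*(i)=1$, $u_k^*(j)=0$, hence exactly $\sum_k R_k(n_k-R_k)=\overline{R}$ inequalities, and interior-point methods give $O((n+\overline{R})^{3.5}L)$.

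The one genuine flaw is your accounting for Step~2. The linear program over $\Delta_{N^*}$ has $\sum_k n_k$ (up to $Kn$) decision variables $u_k(i)$, and the claim that its ``size'' is $O(n+\overline{R})$ because ``the effective combinatorial complexity of $F_k$ is captured by $R_k(n_k-R_k)$'' does not follow: interior-point complexity is governed by the number of variables and constraints of the LP actually solved, and $\sum_k n_k$ is not bounded by $n+\overline{R}$ in general (for instance $K\gg n$ with all $R_k=1$ already gives $\sum_k n_k \approx \overline{R}+K$, not $\overline{R}+n$). The quantity $\overline{R}$ plays no role in Step~2; it is the constraint count of Step~3 only. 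The repair is what the paper does: bound Step~2 directly as a linear program in dimension $Kn$ (with integral optimum by total unimodularity of $A$), costing $O((Kn)^{3.5}L)$, which is absorbed into the first summand rather than the second. With that correction your argument matches the paper's proof and yields the stated bound.
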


\begin{proof}
The first step of Theorem~\ref{decomposition} is a maximization of a $M$-concave function over a bounded domain $B$.
Finding a point in $B$ can be done by solving the $M$-convex minimization problem:
\[
\min_{N \in \sum_k F_k} \sum_i \max(N_i- N_i^C,0)
\]
The domain of the function $N \mapsto \sum_i \max(N_i- N_i^C,0) + \chi_{\sum_k F_k}$ is $\sum_k F_k$. 
We define $K^1_\infty$ by:
\[
K^1_\infty = \max \{||N-N'||_\infty \mid \; N,N' \in \sum_k F_k \} 
\]
For every $N \in \sum_k N_k$, the entries of $N$ are sum of $K$ binary values. 
Then, $K^1_\infty \leq K$. 
We have to estimate the number of operations $F^1$ needed to evaluate the function $N \mapsto \sum_i \max(N_i- N_i^C,0) + \chi_{\sum_k F_k}$. 
The function $N \mapsto \sum_i \max(N_i- N_i^C,0)$ can be evaluated in $O(n)$ operations. 
As a consequence of Lemma~\ref{lemma-feasible} and Lemma~\ref{lemma-infconv}, $\sum_k F_k = (\sum_k \Delta_k) \cap \Z^n$. 
Hence, for any vector $N$, the conditions $N \in \sum_k F_k$ is equivalent to $N \in (\sum_k \Delta_k) \cap \Z^n$. 
A vector $N$ belongs to $\sum_k \Delta_k$ if there exists for every $k \in \cro{K}$ a vector $u_k \in \Delta_k$ such that $\sum_k u_k = N$. 
Hence, to know whether $N$ belongs to $\sum_k \Delta_k$ or not is a linear feasibility problem in dimension $Kn$, 
It can be solved in $O((Kn)^{3.5} L)$ arithmetic operations by an interior point method (\cite{renegar1988polynomial}).
Here $L$ is the input size of the linear program.
Consequently, $F^1 = O((Kn)^{3.5} L)$, 
and a point in $B$ can be obtained in $O((Kn)^{3.5} L n^3 \log^2 (K/n))$ by Theorem~\ref{comp-algomurota}. 

After obtaining a point in $B$, the first step of Theorem~\ref{decomposition} consists in solving the $M$-concave maximization problem:
\[
\max_{N \in B} \sum_i f_i(N_i).
\]
The domain of the function $N \mapsto \sum_i f_i(N_i) -\chi_B(N)$ is bounded and equal to $B$. 
We define $K^2_\infty$ by:
\[
K^2_\infty = \max \{||N-N'||_\infty \mid \; N,N' \in B \} 
\]
For every $N \in \sum_k N_k$, the entries of $N$ are sum of $K$ binary values. 
Then, for every $i \in \cro{n}$, we have $N_i \leq \min(K,N_i^C)$
Then, $K^2_\infty \leq \min(K,\overline{N}^C)$, 
with $\overline{N}^C = \max_{i \in \cro{n}} N_i^C$. 
The number of operations $F^2$ needed to evaluate the function $N \mapsto \sum_i \max(N_i- N_i^C,0) - \chi_{B}(N)$ is $O((Kn)^{3.5} L)$ like previously. 
Hence, a point $N^* \in \argmax_{N \in B} \sum_i f_i(N_i)$ can be obtained in $O((Kn)^{3.5} L n^3 \log^2 (\min(K,\overline{N}^C)/n))$ by Theorem~\ref{comp-algomurota}.

According to the proof of Lemma~\ref{lemma-feasible}, the second step of Theorem~\ref{decomposition} is a linear program in dimension $Kn$. 
In fact, we have:
\[
\max_{\substack{u_{1} \in F_{1},...,u_{K} \in F_{K} \\\sum_{k}{u_{k}}=N^{*}}}
\sum_{k}{\langle \rho_{k},u_{k} \rangle} = 
\max_{\substack{u_{1} \in \Delta_{1},...,u_{K} \in \Delta_{K} \\\sum_{k}{u_{k}}=N^{*}}}
\sum_{k}{\langle \rho_{k},u_{k} \rangle}
 \enspace ,
\]
and the extreme points of the polyhedron $\Delta_N$ defined by:
\[
\Delta_N = \{ (u_1,\dots,u_K) \in \Delta_1 \times \Delta_K \mid \sum_k u_k = N \}
\]
are integer.
Hence, the second step of Theorem~\ref{decomposition} can be solved in $O((Kn)^{3.5}L)$ arithmetic operations. 

The third step of Theorem~\ref{decomposition} is a linear program in $n$ variables.
For some $u_k^* \in F_k$, the constraints of this program are:
\[
\forall k \in \cro{K}, \; \forall i,j \notin \mathcal{J}_k, \text{ such that } u_k^*(i)=1,u_k^*(j)=0, \;  \rho_k(i)+y^*_i \geq \rho_k(j)+y^*_j.
\]
For every $k \in \cro{K}$, the number of entries of $u_k^*$ equal to $1$ is $R_k$, 
and the number of entries of $u_k^*$ equal to $0$ and which do not belong to $\mathcal{J}_k$ is $n_k$. 
Hence, the number of inequality constraints of this linear program is $\sum_k R_k(n_k-R_k) = \overline{R}$. 
Hence, a solution of this linear program can be found in $O((n+\overline{R})^{3.5} L)$ by interior-point methods. 
\end{proof}

\section{A faster algorithm for solving the bilevel problem} \label{subsec2-4}

\subsection{A polynomial time algorithm for the bilevel problem}

Algorithm~\ref{algo1} can be applied to solve problem~\eqref{high-level} of Theorem~\ref{decomposition}, 
that is maximizing the $M$-concave function $f-\chi_B$,
or equivalently minimizing the $M$-convex function $-f+\chi_B$.

Step 1 consists in finding an initial vector $N \in B$. 
As explained in Section~\ref{sec-first}, this can be done by solving a $M$-convex minimization problem. 
Another approach consists in replacing the function $f-\chi_B$ by $g: N \mapsto f(N)-\chi_{\sum_k F_k}(N) - M \sum_i \max(N_i- N_i^C,0)$,
where $M > 0$ is an integer. 
If $N \in B$, then $g(N)=f(N)$.
If $M$ is sufficiently large, then $M \sum_i \max(N_i- N_i^C,0) \geq M$ if $N \notin B$, and the maximum of the function $g$ is attained for $N \in B$. 
Moreover $N \mapsto M \sum_i \max(N_i- N_i^C,0)$ is separable convex, then $g$ is $M$-concave according to \cite[Th.\ 6.13.(4), p.148]{murota2003discrete}.
Then, both problems $\max_{N \in B} f(N)$ and $\max_{N \in \Z^n} g(N)$ are equivalent, and we can
apply Algorithm~\ref{algo1} to solve the problem $\max_{N \in \Z^n} g(N)$. 
An initial point is obtained by taking any point in $\sum_k F_k$.

We need first part is to determine the number $F$ of operations to evaluate $g$.
Because the different functions $f_i$ are known,
we have to determine the number of operations
to decide whether a vector $N$ belongs to $\sum_k F_k$ or not. 
More precisely, the different evaluations of $f-\chi_B$ are done in Step 2. 
Hence, the question is the following: given a vector $N \in \sum_{k}{F_{k}}$, how many operations are needed to check  whether $N-e_{i}+e_{j}$ (for $i,j \in \cro{n}$) belongs to $\sum_{k}{F_{k}}$.
We next show that this problem can be studied as a shortest path problem in a graph.
Consider $N \in \sum_k F_k$ and let us define $u_k^* \in F_k$ for $k \in \cro{K}$ such that $\psi(N)= \sum_k \langle \rho_k,u_k^* \rangle$,
that is an optimal decomposition of $N$ in Theorem~\ref{decomposition}. 
For each $k \in \cro{K}$ and $\alpha, \beta \in \cro{n}$, we define by $w^k_{\alpha \beta}$ the following quantity:
$w^k_{\alpha\beta}=\rho_k(\alpha)-\rho_k(\beta)$ if $u_k^*(\alpha)=1$ and $u_k^*(\beta)=0$,
and $w^k_{\alpha\beta}=+\infty$ otherwise. 
Then, we define for each $\alpha, \beta \in \cro{n}$, $w_{\alpha\beta}=\min_{k \in \cro{K}} w^k_{\alpha\beta}$.
We consider the oriented valuated graph $G=(V,E)$ where the set of vertices $V=\cro{n}$ and there is an oriented edge between each vertices $\alpha, \beta \in V$ of value $w_{\alpha \beta}$. 
 
\begin{theorem} \label{graphs}
Let $i,j \in \cro{n}$.
Suppose that there exists a path in $G$ with finite valuation between the vertices $i,j \in V$. 
Then $N-e_i+e_j \in \sum_k F_k$. 
Moreover, there are no negative cycles and there is a shortest path between $i$ and $j$. 
Let $(\alpha_u)_{0 \leq u \leq p}$ be any sequence such that $\alpha_0=i$, $\alpha_p=j$ and let $\alpha_0 \rightarrow \alpha_1 \dots \alpha_{p-1} \rightarrow \alpha_p$ be a shortest path between $i$ and $j$. 
Let also $(k_u)_{0 \leq u \leq p-1}$ be any sequence such that $w^{k_u}_{\alpha_u \alpha_{u+1}} = w_{\alpha_u \alpha_{u+1}}$ for all $0 \leq u \leq p-1$. 
Let us finally define the vectors $v_k^*$, $k \in \cro{K}$ such that $v_{k_u}^* = u^*_{k_u} -e_{\alpha_u} + e_{\alpha_{u+1}}$ for each $0 \leq u \leq p-1$ and $v_k^*=u_k^*$ for each $k \notin \{ k_0, \dots, k_{p-1} \}$. 
Then, $\psi(N-e_i+e_j) = \sum_k \langle \rho_k,v_k^* \rangle$.

\end{theorem}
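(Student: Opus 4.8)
The plan is to reduce all three assertions to statements about \emph{optimal decompositions} of $N$ and $N-e_i+e_j$ in the inf-convolution $\psi$. By Lemma~\ref{lemma-feasible} and Lemma~\ref{lemma-infconv}, for any integer point $M \in \sum_k F_k$ the value $\psi(M)$ is attained by integer vectors, so that the optimal decomposition value $\sum_k \langle \rho_k, u_k\rangle$ over $u_k \in F_k$, $\sum_k u_k = M$, determines $\psi(M)$. The weight $w^k_{\alpha\beta}$ is precisely the decrease of this objective caused by the elementary swap making customer $k$ consume at $\beta$ instead of $\alpha$: it replaces $u_k^*$ by $u_k^* - e_\alpha + e_\beta$, changes $N$ by $-e_\alpha + e_\beta$, and is admissible exactly when $w^k_{\alpha\beta} < +\infty$ (finiteness forces $u_k^*(\alpha)=1$, $u_k^*(\beta)=0$, and, since $\rho_k=-\infty$ on $\mathcal{J}_k$, also $\alpha,\beta \notin \mathcal{J}_k$). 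I would treat every directed path and cycle of $G$ as a composition of such swaps.

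\emph{Feasibility and the upper bound.} Along a path $\alpha_0 = i \to \dots \to \alpha_p = j$, which may be taken simple, the swaps telescope: $\sum_u (-e_{\alpha_u}+e_{\alpha_{u+1}}) = -e_i + e_j$. Defining $v_k^*$ by applying to $u_k^*$ all swaps assigned to $k$ gives $\sum_k v_k^* = N - e_i + e_j$. I would then check $v_k^* \in F_k$: a customer is never used on two consecutive edges (that would require $u_k^*(\alpha_{u+1})$ to be both $0$ and $1$), and along a simple path the tails and heads of the $k$-edges are distinct and mutually disjoint, so the flips keep $v_k^*$ in $\{0,1\}^n$ with unchanged support size $R_k$ and unchanged zeros on $\mathcal{J}_k$. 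Hence $N - e_i + e_j \in \sum_k F_k$, and $\sum_k \langle \rho_k, v_k^*\rangle = \sum_k\langle \rho_k, u_k^*\rangle - \sum_u w_{\alpha_u\alpha_{u+1}}$; a shortest path yields the candidate value $\sum_k\langle\rho_k,u_k^*\rangle - d(i,j)$, where $d(i,j)$ denotes the shortest-path length.

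\emph{Absence of negative cycles.} Applying the same flip construction to a simple cycle produces, by the identical verification, vectors $\tilde u_k \in F_k$ with $\sum_k \tilde u_k = N$ (the swaps telescope to zero around a cycle) and $\sum_k\langle\rho_k,\tilde u_k\rangle = \sum_k\langle\rho_k,u_k^*\rangle - (\text{cycle weight})$. A negative cycle would then give a decomposition of $N$ strictly better than the optimal $u_k^*$, a contradiction. Since a finite-valuation path exists and no negative cycle does, a shortest $i$--$j$ path exists and may be taken simple.

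\emph{Optimality (the crux).} It remains to show that no decomposition of $N - e_i + e_j$ beats $\sum_k\langle\rho_k,u_k^*\rangle - d(i,j)$. Given any $w_k \in F_k$ with $\sum_k w_k = N - e_i + e_j$, I would examine $\sum_k(u_k^* - w_k) = e_i - e_j$. For each $k$, the vector $u_k^* - w_k$ has equally many $+1$'s and $-1$'s; pairing each $+1$ index $\alpha$ with a $-1$ index $\beta$ produces an admissible arc $\alpha \to \beta$ of $G$ through $k$ (finite weight), and the total weight of all arcs so produced equals $\sum_k\langle\rho_k,u_k^*-w_k\rangle$. At every vertex $\gamma$ the out-degree minus in-degree of this arc multiset equals $\sum_k (u_k^*-w_k)(\gamma) = (e_i - e_j)(\gamma)$, so it is a unit $i$--$j$ flow and decomposes into one $i$--$j$ path together with directed cycles. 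Since each arc weight dominates the corresponding $w_{\alpha\beta} = \min_k w^k_{\alpha\beta}$, the path contributes at least $d(i,j)$ and each cycle at least $0$ (no negative cycles), whence $\sum_k\langle\rho_k,u_k^* - w_k\rangle \geq d(i,j)$. Thus every decomposition of $N-e_i+e_j$ has value at most $\sum_k\langle\rho_k,u_k^*\rangle - d(i,j) = \sum_k\langle\rho_k,v_k^*\rangle$, so the $v_k^*$ are optimal and realize the inf-convolution $\psi(N-e_i+e_j)$, which is the asserted identity. The main obstacle is precisely this last step: converting a global comparison of two decompositions into a path-plus-cycles structure in $G$ and invoking the shortest-path and no-negative-cycle properties simultaneously; the earlier parts are telescoping and a contradiction argument.
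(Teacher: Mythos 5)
Your proof is correct and follows essentially the same route as the paper: interpreting the elementary swaps $u_k^* \mapsto u_k^* - e_\alpha + e_\beta$ as weighted arcs, telescoping along a path for feasibility, deriving the absence of negative cycles from the optimality of the decomposition of $N$, and identifying $\psi(N-e_i+e_j)-\psi(N)$ with the shortest-path length. The one place where you go beyond the paper is the optimality step, where you make explicit the decomposition of the arc multiset arising from $\sum_k(u_k^*-w_k)=e_i-e_j$ into one $i$--$j$ path plus cycles — a detail the paper's proof asserts rather tersely — which is a welcome clarification rather than a different method.
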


\begin{proof}
By Lemma~\ref{lemma-feasible} and~\ref{lemma-infconv}, we know that $N-e_{i}+e_{j} \in \sum_{k}{F_{k}}$ if and only if there exists $K$ vectors $v_k^*$ such that  $N-e_{i}+e_{j}=\sum_{k}{v_{k}^{*}}$ with each $v_{k}^{*} \in F_{k}$ and $ \psi(N-e_{i}+e_{j})=-\sum_{k}{\langle \rho_{k},v_{k}^{*} \rangle}$. We consider $\psi(N)=-\sum_{k}{\langle \rho_{k},u_{k}^* \rangle}$ with each $u_{k}^* \in F_{k}$. Hence, $\psi(N-e_{i}+e_{j})-\psi(N)$ is equal to:
\[
\min_{v_{k} \in F_{k} \text{ and } \sum_{k}{v_{k}=N-e_{i}+e_{j}}} \sum_{k}{\langle \rho_{k},u_{k}^*-v_{k} \rangle}.
\]

We have $\sum_k (u_k^*-v_k)=e_{i}-e_{j}$.
When $v_{k}$ describes $F_{k}$, the possible $u_{k}^*-v_{k}$ are the vectors $x_k$ with the following properties:
\begin{align*}
& \sum_{\alpha=1}^n x_k(\alpha) = 0,  \quad \forall \alpha \text{ s.t. } u_k^*(\alpha)=1, x_k(\alpha) \in \{0;1\}, \\
& \forall \alpha \in \mathcal{J}_k, x_k(\alpha)=0, \quad \forall \alpha \text{ s.t. } u_k^*(\alpha)=0, x_k(\alpha) \in \{-1;0\}
\end{align*}
Hence, $\psi(N-e_i+e_j) - \psi(N) = \sum_k \langle \rho_k,x_k^* \rangle $, where $x_k^*$ is such that $\# \{ \alpha \mid x_k^*(\alpha)=1 \} =  \# \{ \alpha \mid x_k^*(\alpha)=-1 \}$. 
Consequently, $\psi(N-e_i+e_j) - \psi(N)$ can be written as a sum of $w^{k}_{\alpha\beta}$ for certain $\alpha, \beta$. Because of the condition $\sum_{k}{u_{k}^*-v_{k}}=e_{i}-e_{j}$, we have $\psi(N-e_i+e_j) - \psi(N)=w_{\alpha_0\alpha_1}^{k_0}+w_{\alpha_1\alpha_2}^{k_1}+ \dots +w_{\alpha_{p-1}\alpha_p}^{k_{p-1}}$, 
with the notations introduced in Theorem~\ref{graphs}.

Consider now the graph defined in Theorem~\ref{graphs}.
If there exists a path between $i$ and $j$, then its value can be written $w_{\beta_0\beta_1}^{l_0}+w_{\beta_1\beta_2}^{l_1}+ \dots +w_{\beta_{q-1}\beta_q}^{l_{q-1}}$ (with the convention $\beta_0=i$ and $\beta_q=j$).
By defining $v_k=u_k^*$ if $k \notin \{ l_0,\dots,l_{q-1} \}$ and $v_{l_u} = u_{l_u}^* - e_{\beta_u} + e_{\beta_{u+1}}$ for $0 \leq u \leq q-1$ , the value of the path is equal to $\sum_k \langle \rho_k,u_k^*-v_k \rangle$. 
Because $w_{\beta_u \beta_{u+1}}^{l_u} < + \infty$, we have $u^*_{l_u}(\beta_u)=1$ and $u^*_{l_u}(\beta_{u+1})=0$. 
Then, each $v_k \in F_k$.
Consequently, the value $\min_{v_{k} \in F_{k} \text{ and } \sum_{k}{v_{k}=N-e_{i}+e_{j}}} \sum_{k}{\langle \rho_{k},u_{k}^*-v_{k} \rangle}$ is finite and $N-e_i+e_j \in \sum_k F_k$.
Moreover, the value $\psi(N-e_{i}+e_{j})-\psi(N)$ corresponds to the minimal values of the path between $i$ and $j$ in $G$, that is the shortest path.
Hence, if the value of the shortest path is $\sum_{u=0}^{p-1} w^{l_u}_{\alpha_u \alpha_{u+1}}$, we have $\psi(N-e_i+e_j)-\psi(N)= \sum_k \langle \rho_k, u_k^*-v_k^* \rangle$, with $v_k^*$ defined as in the statement of Theorem~\ref{graphs}. 
Moreover, we can prove that there exists no cycle with negative weight in this graph.
Suppose that such a cycle exists.
It can be written $w_{\gamma_{0}\gamma_{1}}^{l_{0}}+w_{\gamma_{1}\gamma_{2}}^{l_1}+ \dots + w_{\gamma_{r}\gamma_{0}}^{l_r}<0$.
For all $i \in \{0 \dots r\}$, we have $u_{l_i}(\gamma_i)=1$ and $u_{l_i}(\gamma_{i+1})=0$.
We consider for $k \in \cro{K}$ the vectors $v_k$ defined by $v_{l_i} = u^*_{l_i} - e_{\gamma_i} + e_{\gamma_{i+1}}$, and $v_{k}=u^*_{k}$ for $ k \notin \{ l_0, \dots, l_r \}$.
We have $\sum_{k}{u^*_{k}-v_{k}}=0$ and so $\sum_{k}{\langle \rho_{k},u_{k} \rangle} = \sum_{k}{\langle \rho_{k},v_{k} \rangle} + w_{\alpha_{1}\alpha_{2}}^{k_{1}}+w_{\alpha_{2}\alpha_{3}}^{k_{2}}+ \dots + w_{\alpha_{p}\alpha_{1}}^{k_{p}} < \sum_{k}{\langle \rho_{k},v_{k} \rangle} $ which refutes the optimality of the vectors $u^*_{k}$ in the definition of $\psi(N)$.    
\end{proof} 

\begin{example} We consider the cell (a) of Figure~\ref{fig-five}. 
We build the graph associated to $N=(3,3,1)$ (see Figure~\ref{graph-example}).

\begin{figure}[h]
\begin{center}
\begin{tikzpicture} [scale=1.5]
	\draw (-1,0) circle (0.3cm);
	\node at (-1,0) {1};
	\draw (0.5,1) circle (0.3cm);
	\node at (0.5,1) {2};
	\draw (0.5,-1) circle (0.3cm);
	\node at (0.5,-1) {3};
	\draw[->, >=latex] (-0.75,0.3) -- (0.2,0.9);
	\node at (-0.5,0.75) {0};
	\draw[-> , >=latex] (0.3,0.7) -- (-0.65,0.1);
	\node at (-0.2,0.2) {1.5};
	\draw[-> , >=latex] (0.6,0.65) -- (0.6,-0.65);
	\node at (0.8,0) {0.5};
	\draw[-> , >=latex] (0.4,-0.65) -- (0.4,0.65);
	\node at (0.25,0) {1};
	\draw[-> , >=latex] (-0.7,-0.2) -- (0.2,-0.8);
	\node at (-0.4,-0.75) {0}; 
\end{tikzpicture}
\end{center}

\caption{Graph $G$ associated to the vector $N=(3,3,1)$}
\label{graph-example}
\end{figure}
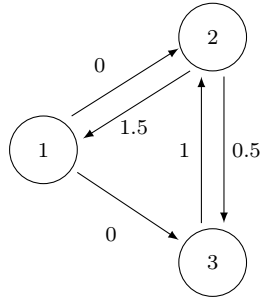

Consider $N'=N-e_1+e_2=(2,4,1)$. The shortest path in $G$ is $1 \rightarrow 2$ with $w_{12}=0=w_{12}^1$. 
Then, according to Theorem~\ref{graphs}, the optimal decomposition of $(2,4,1)$ is $v_1^*=(0,1,0)$, $v_2^*=(1,0,1)$, $v_3^*=(0,1,0)$, $v_4^*=(1,1,0)$ and $v_5^*=(0,1,0)$.
\end{example}

Thanks to Theorem~\ref{graphs}, if we know that a vector $N$ belongs to $\sum_k F_k$, 
it is possible to check whether a vector $N-e_{i}+e_{j}$ belongs to  $F_{k}$  by checking if there exists a path between $i$ and $j$ in the graph $G=(V,E)$. 
Generally, $G$ has $n$ vertices and $n^2$ edges. 
From each vertex $i \in V$, it is possible to find if there exists a path between $i$ and $j$ by using a depth-first or breadth first search algorithm in $O(n^2)$ operations. 
Consequently, the number of operations needed to evaluate $g$ is $O(n^3)$. 

According to Theorem~\ref{graphs}, by checking if $N-e_i+e_j \in B$, we obtain the optimal decomposition of $N-e_i+e_j = \sum_k v_k^*$ such that $\psi(N-e_i+e_j) = - \sum_k \langle \rho_k,v_k^* \rangle$ by solving a shortest path problem between two vertices.
This can be done in $O(n^3)$ operations thanks to Ford-Bellman algorithm (\cite{bellman1958routing}, \cite{ford1956network}), because the graph $G$ has $n$ vertices and at most $n^2$ edges.
Hence, according to Theorem~\ref{decomposition}, it suffices to solve the bilevel problem~\ref{bilevel-telecom} to solve the linear feasibility problem of Lemma~\ref{linear-feasibility}. 
Moreover, this problem can also be viewed as a shortest path problem in $G$, according to the following result.

\begin{theorem} \label{feasibility-graphs}
Consider $K$ vectors $u_k^* \in F_k$ for each $k \in \cro{K}$ such that, if we define $N = \sum_k u_k^*$, we have $\psi(N)=-\sum_k \langle \rho_k,u_k^* \rangle$.
Consider the graph $G$ associated to $N$. 
Consider an index $s \in \cro{n}$. 
Let $M >0$ be any real scalar such that $M \geq  n \max_{i,j \in \cro{n}} w_{ij}$
and let us modify $G$ such that for all $t \in \cro{n}$ with $t \neq s$ and $w_{st}=+\infty$, we have $w_{st}=M$.
Let us define a vector $y^* \in \R^n$ by $y^*_s=0$ and for each $t \in \cro{n}$ with $t \neq s$, $y^*_t$ is the length of the shortest path between $s$ and $t$ in $G$. 
Then, for $M$ sufficiently large and for each $k \in \cro{K}$, $u_k^* \in \arg\max_{u_k \in F_k} \langle \rho_k+y^*,u_k \rangle$. 
\end{theorem}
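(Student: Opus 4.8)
The plan is to read off the desired price vector directly from shortest-path distances in $G$, after recasting the conclusion as the linear feasibility problem already analysed in Lemma~\ref{linear-feasibility}. Since the vectors $u_k^*$ form an optimal decomposition realising $\psi(N)$, Lemma~\ref{linear-feasibility} tells us that the prices $y^*$ making every $u_k^*$ an optimal low-level response are exactly those satisfying
\[
\rho_k(i)+y^*_i \ge \rho_k(j)+y^*_j \qquad \text{for all } k \text{ and all } i,j \notin \mathcal{J}_k \text{ with } u_k^*(i)=1,\ u_k^*(j)=0 .
\]
So it suffices to produce one $y^*$ obeying all of these inequalities.

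Rewriting each inequality as $y^*_j - y^*_i \le \rho_k(i)-\rho_k(j)$, I would observe that its right-hand side is precisely $w^k_{ij}$ and that the triggering condition $u_k^*(i)=1,\ u_k^*(j)=0$ is exactly the situation in which $w^k_{ij}$ is finite. Taking the minimum over $k$, it is therefore enough to find $y^*$ with
\[
y^*_j - y^*_i \le w_{ij} \qquad \text{for every edge } (i,j) \text{ of } G \text{ with } w_{ij} < +\infty ,
\]
i.e.\ a feasible node potential for $G$. The classical fact I would invoke is that shortest-path distances from a fixed source obey the triangle inequality along every edge: if $y^*_s=0$ and $y^*_t$ is the length of a shortest path from $s$ to $t$, then $y^*_j \le y^*_i + w_{ij}$ for every edge present in the graph. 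As all original finite-weight edges survive in the modified graph, this $y^*$ satisfies the displayed constraints, hence a fortiori the (possibly smaller) family required by Lemma~\ref{linear-feasibility}, and the conclusion $u_k^* \in \arg\max_{u_k \in F_k} \langle \rho_k+y^*,u_k \rangle$ follows for each $k$.

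The main obstacle is to guarantee that these distances are genuine finite reals, which is exactly what the modification of $G$ and the choice of $M$ are designed for. Two failures must be excluded. First, a vertex $t$ unreachable from $s$ in the original $G$ would give $y^*_t=+\infty$; adding, for each such $t$, an edge $s \to t$ of weight $M$ makes every vertex reachable. These new edges concern pairs with $w_{st}=+\infty$ originally, hence impose no constraint from Lemma~\ref{linear-feasibility} and create no conflict. Second, a negative cycle would force $y^*_t=-\infty$; here I would use that Theorem~\ref{graphs} already excludes negative cycles in $G$, and note that a simple cycle through an added edge uses exactly one edge of weight $M$ (all added edges leave $s$, which occurs once in a simple cycle) together with a path of at most $n-1$ finite-weight edges, so its weight is at least $M-(n-1)\max_{i,j}|w_{ij}|$, which is positive once $M$ is of the order of the stated bound $M \ge n \max_{i,j} w_{ij}$. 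For such $M$ the distances $y^*_t$ are finite, $y^*$ is a genuine potential, and the theorem follows.
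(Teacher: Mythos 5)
Your proof is correct and follows essentially the same route as the paper's: reduce via Lemma~\ref{linear-feasibility} to the system of difference constraints $y^*_j-y^*_i\le w_{ij}$, observe that shortest-path distances from $s$ in the modified graph form a feasible potential, and justify finiteness by the added $M$-edges (reachability) together with the absence of negative cycles from Theorem~\ref{graphs}. Your observation that an elementary cycle uses at most one added edge (since all added edges leave $s$) is in fact a slightly cleaner accounting than the paper's bound $qM+(n-1-q)\min_{i,j}w_{ij}$, and both arguments share the same mild looseness about the exact constant, which is absorbed by the ``for $M$ sufficiently large'' clause of the statement.
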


\begin{proof}
According to Lemma~\ref{linear-feasibility}, a vector $y \in \R^n$ is such that for every $k \in \cro{K}$, $$u_k^* \in \arg\max_{u_k \in F_k} \langle \rho_k+y,u_k \rangle$$ if and only if the following inequalities are satisfied:
\[
\forall k \in \cro{K}, \; \forall i,j \notin \mathcal{J}_k, \text{ such that } u_k^*(i)=1,u_k^*(j)=0, \;  \rho_k(i)+y_i \geq \rho_k(j)+y_j.
\]
Consider such a vector $y$.
Consider also the graph $G$ associated to $N$
The previous inequalities can be rewritten $\forall k \in \cro{K}, \forall i,j \in \cro{n}, \; y_j - y_i \leq w^k_{ij}$, 
or equivalently : $\forall i,j \in \cro{n}, \; y_j - y_i \leq w_{ij}$. 
For each $\delta \in \R$, $y + \delta e$ is also a solution. 
Consequently, it is possible to fix a coordinate to $0$.
Take a coordinate $s$ such that $y_s = 0$. 
Consider $M >0$ such that $M \geq n \max_{i,j} w_{ij}$ and modify the graph $G$ as in the statement of the theorem. 
Consider an elementary cycle (that is a cycle containing no smaller cycle) of the modified graph. 
The cycle has no more than $n-1$ edges. 
Suppose that exactly $q$ edges have a modified weight, with $0 \leq q \leq n-1$.  
If $q=0$, then no edge has a modified weight, and this cycle is a cycle of $G$. 
So, its weight is nonnegative.
If $q \geq 1$, then the total weight of the cycle is bigger than $qM + (n-1-q) \min_{i,j} w_{ij} \geq n (\max_{i,j} w_{ij} - \min_{i,j} w_{ij}) \geq 0$.
Consequently, the modified graph has no negative cycles.

For each $t \in \cro{n}$, with $t \neq s$, there exists a path between $s$ and $t$. 
Let us define $y^*$ such that $y^*_s=0$ and for each $t \in \cro{n}$ with $t \neq s$, $y^*_t$ corresponds to the length of the shortest path between $s$ and $t$. 
Consider $i,j \in \cro{n}$. 
Then $y^*_i + w_{ij}$ is the length of a path between $s$ and $j$ defined as the concatenation of the shortest path between $s$ and $i$ and the edge $i \rightarrow j$. 
So $y^*_i + w_{ij} \geq y^*_j$. 
Hence, according to Lemma~\ref{linear-feasibility}, we have for each $k \in \cro{K}$, $u_k^* \in \arg\max_{u_k \in F_k} \langle \rho_k+y^*,u_k^* \rangle $.  
\end{proof}

These different results lead to Algorithm~\ref{algo-bilevel} to solve the bilevel problem~\ref{bilevel-telecom}.
First, we have to find an initial point $N$ in $\sum_k F_k$,
with its optimal decomposition $\sum_k u_k^*$. 
We can calculate for each $k \in \cro{K}$ and for each $i,j \notin \mathcal{J}_k$ the value $w^k_{ij}$, store them, 
and then define the graph $G$ associated to $N$. 
Hence, with a graph search algorithm, we know for each $i,j \in \cro{n}$ whether $N-e_i+e_j \in \sum_k F_k$ or not,
and can calculate $g(N-e_k+e_k)$ for each $k,l \in \cro{n}$ and find $i,j \in \argmax_{k,l} g(N-e_k+e_l)$.
By finding the shortest path between $i$ and $j$ in $G$, we obtain the optimal decomposition $N-e_i+e_j = \sum_k v_k^*$.
Like in Algorithm~\ref{algo1}, if $g(N-e_i+e_j) \leq g(N)$, then $N^*=N$ is the maximum value of $g$ over $\sum_k F_k$.
Else, we take $N:= N-e_i+e_j$.
For all the indices $k$ such that $u_k^* \neq v_k^*$, we evaluate the new value of $w_{ij}^k$ and we define the graph $G$ associated to $N-e_i+e_j$ and restart the algorithm.
Notice that the number of indices $k$ such that $u_k^* \neq v_k^*$ is bounded by the length of the shortest path in $G$; it means that this number is less than $n$. 
After finding the optimal $N^*$ and having its optimal decomposition $N^*=\sum_k u_k^*$, we can redefine the graph associated to $N^*$ and return an optimal $y^*$ defines as in the statement of Theorem~\ref{feasibility-graphs}.

Algorithm~\ref{algo-bilevel} can be written as follows.
We take in input a function GraphSearch, which associate to a graph $G$ (defined by the weight vector $w$ of its edges) a Boolean vector $b$ such that $b_{ij}=1$ if there is an edge between $i$ and $j$ and $0$ otherwise. 
We also take a function ShortestPath, which associate to a graph $G$ (also defined by the weight vector $w$) and two vertices $i$ and $j$, the value $v$ of the shortest path and a vector $path$ with the indices of this shortest path.
Finally, we consider the function ShortestPath2, which associate to $w$ and a vertex $s$ a vector corresponding to the values of the shortest path between $s$ and all other vertices in $G$.
For much ease, we denote by $f^*$ the function $f^*:N \mapsto f(N)+M\sum_{i=1}^n \max(N_i-N^C_i,0)$.

\begin{algorithm}
\small
\begin{algorithmic}
\REQUIRE $u_k^* \in F_k, \forall k \in \cro{K}$,$\rho_k, \forall k \in \cro{K}$, $f^*$, $GraphSearch$, $ShortestPath$, $ShortestPath2$, $s \in \cro{n}$
\ENSURE $N^*$ optimal number of customers, $y^*$ optimal discount vector
\STATE $N \leftarrow \sum_{k=1}^K u_k^*$
\FORALL{$k \in \cro{K}$}
\FORALL{$i,j \notin \mathcal{J}_k$}
\IF{$u_k(i)=1$ \AND $u_k(j)=0$}
\STATE $w_{ij}^k \leftarrow \rho_k(i)-\rho_k(j)$
\ENDIF
\ENDFOR
\ENDFOR
\FORALL{$i,j \in \cro{n}$}
\STATE $w_{ij} \leftarrow \min_{k \in \cro{K}} w^{k}_{ij}$;
\qquad $k_{ij} \in \arg\min_{k \in \cro{K}} w^{k}_{ij}$
\ENDFOR
\STATE $stop \leftarrow 0$
\WHILE{$stop = 0$}
\STATE $b \leftarrow GraphSearch(w)$
\STATE $g_N \leftarrow f^*(N)$;
\quad $g^* \leftarrow \max_{u,v \in \cro{n}, b_{uv}=1} f^*(N-e_u+e_v)$;
\;  $i,j \in \arg\max_{u,v \in \cro{n}, b_{uv}=1} f^*(N-e_u+e_v)$
\IF{$g^* \leq g_N$}
\STATE $stop \leftarrow 1$
\ELSE
\STATE $(v,path) \leftarrow Shortestpath(w,i,j)$; 
\qquad $N \leftarrow N-e_i+e_j$
\FOR{$q=1$ \TO $Length(path)-1$}
\STATE $\alpha \leftarrow path(q)$;
\qquad $\beta \leftarrow path(q+1)$;
\qquad $k \leftarrow k_{\alpha\beta}$;
\qquad $u_k^*(\alpha)=0$;
\qquad $u_k^*(\beta)=1$
\FORALL{$\gamma \notin \mathcal{J}_k$}
\STATE $w^k_{\alpha \gamma} \leftarrow +\infty$;
\qquad $w^k_{\gamma \beta} \leftarrow +\infty$
\IF{$u_k^*(\gamma)=1$}
\STATE $w^k_{\gamma \alpha} \leftarrow \rho_k(\gamma)-\rho_k(\alpha)$
\ELSE
\STATE $w^k_{\beta \gamma} \leftarrow \rho_k(\beta)-\rho_k(\gamma)$
\ENDIF
\ENDFOR
\ENDFOR
\FORALL{$i,j \in \cro{n}$}
\STATE $w_{ij} \leftarrow \min_{k \in \cro{K}} w^{k}_{ij}$;
\qquad $k_{ij} \in \arg\min_{k \in \cro{K}} w^{k}_{ij}$
\ENDFOR
\ENDIF
\ENDWHILE
\STATE $M \leftarrow 1 + n \max_{i,j \in \cro{n}} w_{ij}$
\FORALL{$t\in \cro{n}$}
\IF{$t \neq s$ AND $w_{st}=+\infty$}
\STATE $w_{st}=M$
\ENDIF
\STATE $y^* \leftarrow Shortestpath2(w,s)$
\STATE $y^*_s \leftarrow 0$
\ENDFOR
\end{algorithmic}
\caption{Solving the bi-level problem, for one application and one type of contract} \label{algo-bilevel}
\end{algorithm}

Note that the pseudo-polynomial time bound for Murota's greedy algorithm~\ref{algo1} given by Proposition~\ref{comp-algogreedy} leads in this special case to a polynomial time bound, 
as explained in the following result.

\begin{theorem} \label{complexity-algo-bilevel}
Let us define $R = \sum_k R_k$, for each $k \in \cro{K}$ $n_k = n - \# \mathcal{J}_k$ (that is the number of possible non-zero entries of the vectors of $F_k)$ and $\overline{R} = \sum_k R_k(n_k-R_k)$.
Algorithm~\ref{algo-bilevel} returns a global optimizer with a time complexity of $O(R(n^3+\overline{R}))$ and a space complexity of $O(\overline{R})$.
\end{theorem}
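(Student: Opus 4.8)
The plan is to establish the three assertions of the theorem—correctness, the time bound, and the space bound—separately, leaning on the structural results already proved. For correctness I would first argue that the while loop of Algorithm~\ref{algo-bilevel} is nothing but Murota's greedy descent (Algorithm~\ref{algo1}) applied to the $M$-concave function $g:N\mapsto f(N)-\chi_{\sum_k F_k}(N)-M\sum_i\max(N_i-N_i^C,0)$ (the function called $f^*$ in the listing, up to the feasibility indicator), whose maximizers over $\Z^n$ coincide with the maximizers of $f$ over $B$ once $M$ is large enough, as shown in the discussion preceding the statement. The only nonroutine point is that each greedy step is executed through the graph $G$ rather than by a direct evaluation of $g$: by Theorem~\ref{graphs}, the boolean matrix returned by GraphSearch records exactly which swaps $N-e_u+e_v$ lie in $\sum_k F_k$, so that $\max_{b_{uv}=1} f^*(N-e_u+e_v)$ ranges over precisely the feasible neighbours, and the shortest path returned by ShortestPath supplies the new optimal decomposition $\sum_k v_k^*$. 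Hence at termination $N^*$ is a local, and therefore (by the local-to-global optimality of $M$-concave maximization) global, maximizer over $B$, the maintained vectors $u_k^*$ form an optimal decomposition, and the concluding block produces $y^*$ via Theorem~\ref{feasibility-graphs}; by Theorem~\ref{decomposition}, the returned pair $(N^*,y^*)$ solves Problem~\ref{bilevel-telecom}.

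For the time bound I would first count the iterations of the while loop. Every element of $\sum_k F_k$ has coordinate sum $R=\sum_k R_k$, so $K_1:=\max\{\|N-N'\|_1 : N,N'\in\sum_k F_k\}\le 2R$, and Proposition~\ref{comp-algogreedy} then bounds the number of greedy steps by $O(R)$. Next I would tally one iteration: the all-pairs reachability computed by GraphSearch on $G$ (with $n$ vertices and at most $n^2$ edges) costs $O(n^3)$; since each $f_i$ is separable, evaluating $f^*(N-e_u+e_v)$ over all feasible pairs and extracting the argmax costs $O(n^2)$; the Ford--Bellman shortest-path call costs $O(n^3)$. The decomposition update alters at most one index $k$ per edge of the path, hence at most $n$ indices, each refreshed at cost $O(n)$, for $O(n^2)$ total, while rebuilding $w_{ij}=\min_k w^k_{ij}$ amounts to sweeping the finite entries $w^k_{ij}$, of which there are $\sum_k R_k(n_k-R_k)=\overline R$, giving $O(n^2+\overline R)$. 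Thus each iteration costs $O(n^3+\overline R)$, the final shortest-path computation for $y^*$ being dominated, and multiplying by the $O(R)$ iterations yields the announced $O(R(n^3+\overline R))$.

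For the space bound the decisive observation is that each $u_k^*$ has exactly $R_k$ entries equal to $1$ and $n_k-R_k$ admissible entries equal to $0$, so the finite weights $w^k_{\alpha\beta}$ (those with $u_k^*(\alpha)=1$, $u_k^*(\beta)=0$) number exactly $R_k(n_k-R_k)$; storing them over all $k$ costs $O(\overline R)$, which I would identify as the dominant term, the supports of the $u_k^*$ contributing only $O(R)$ and the aggregated arrays $w_{ij}$, $k_{ij}$, $b$ contributing $O(n^2)$. The step I expect to be the main obstacle is precisely the per-iteration accounting of this weight maintenance: one must verify that only $O(n)$ indices $k$ change per step and that the recomputation of the minima $w_{ij}=\min_k w^k_{ij}$ can be charged to the $\overline R$ finite entries rather than to a naive $O(Kn^2)$ pass, since it is this bookkeeping that converts Murota's pseudo-polynomial scheme into a genuinely polynomial algorithm in the present setting.
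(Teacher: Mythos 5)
Your proposal is correct and follows essentially the same route as the paper: iteration count bounded by the $\ell_1$-diameter $2R$ of $\sum_k F_k$, per-iteration cost dominated by the $O(n^3)$ graph search and Ford--Bellman calls plus $O(\overline R)$ weight maintenance, and space dominated by the $R_k(n_k-R_k)$ finite weights per customer. If anything, your per-iteration accounting ($O(n^3+\overline R)$) is slightly tighter and more consistent with the stated final bound than the paper's own intermediate estimate of $O(n^3+n\overline R)$, and your remark that the arrays $w_{ij}$, $k_{ij}$, $b$ add an $O(n^2)$ space term not strictly covered by $O(\overline R)$ is a fair, minor refinement of the paper's claim.
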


\begin{proof} The vector returned by the algorithm is a global optimizer according to Algorithm~\ref{algo1} and Theorem~\ref{graphs}. 
The initialization consists in taking vectors in each $F_k$ and in adding them; it can be done in $O(K)$ operations. 
Then, to define the graph $G$, we have to calculate $w_{ij}^k$ for each $i,j \notin \mathcal{J}_k$ and each $k \in \cro{K}$, 
and to store the values.
Let us define for each $k \in \cro{K}$ $n_k=n-\# \mathcal{J}_k$.
For each $k \in \cro{K}$, we have $R_k \leq n_k$, and there are precisely $R_k$ coordinates of $u_k^*$ equal to 1 for each $u_k^* \in F_k$. 
Then, for each $k \in \cro{K}$, there are exactly $R_k(n_k-R_k)$ finite values of $w_{ij}^k$ to store.
Then, by defining $\overline{R} = \sum_k R_k(n_k-R_k)$, we need $O(\overline{R})$ operations to define $w_{ij}$ and $k_{ij}$.
The function $GraphSearch$ needs $O(n^3)$ operations by a depth-first or breadth-first algorithm to know if there is a path between $i$ and $j$.
The function $ShortestPath$ needs also $O(n^3)$ operations to calculate the shortest path between $i$ and $j$ with Ford-Bellman algorithm. 
The length of the path is bounded by $n$. 
Consequently, there is less than $n$ vectors $u_k^*$ which have to be updated;
and then less than $2nn_k$ values $w_{\alpha\beta}^k$ to update. 
$\overline{R}$ operations are needed to calculate the new values of $w_{ij}$ and $k_{ij}$. 
So, the number of operations in each step of the "while" loop is $O(n^3+n\overline{R})$. 
The number of iterations of the loop is the same as in Algorithm~\ref{algo1}, and is bounded by $K_1$ where $K_1= \max(||x-y||_1, x,y \in \sum_k F_k)$. 
For each $x,y \in \sum_k F_k$, we have:
\[
||x-y||_1 = \sum_{i=1}^n |x_i - y_i| \leq \sum_{i=1}^n (x_i + y_i) = 2 R
\] 
by defining $R=\sum_{k=1}^K R_k$.
Finally, to find the optimal $y^*$, $n^2$ operations are needed to find $M$, and $O(n^3)$ operations are needed to evaluate the function $ShortestPath2$ by using again the Ford-Bellman algorithm.
Step 7 consists in calculating the shortest path between a vertex $s$ and the other ones in a graph with $n$ vertices and $n^2$ edges. 
Then, Step 7 can be obtained in $O(n^3)$ thanks to Ford-Bellman algorithm. 
Hence, the global time complexity of Algorithm~\ref{algo-bilevel} is $O(R(n^3+\overline{R}))$ and space complexity is $O(\overline{R})$. 
\end{proof}  

Notice that for each $k \in \cro{K}$, $n_k \leq n$ and $1 \leq R_k \leq n_k$. 
Then $K \leq R \leq nK$ and $0 \leq \overline{R} \leq Kn^2$. 
Therefore, the time complexity of Algorithm~\ref{algo-bilevel} is $O(Kn^3(K+n))$ in the worst case, whereas the space complexity is $O(Kn^2)$. 

\begin{example}
Consider again Example~\ref{ex-1} together with the concave function $f$ defined by
\[
f: N \mapsto -\sum_{t,l}{N(t,l)^{2}}.
\]
We suppose that $\forall k, \mathcal{J}_{k}=\emptyset$. Hence, we can prove that $\sum_{k}{F_{k}}=\lbrace N \in \N^{3}|\sum_{i=1}^{3}{N_{i}}=7$ and $ \max(N_{i})\leq 5 \rbrace$. First, we want to solve
$
\max_{N \in \sum_{k}{F_{k}}}
{-(N_{1}^{2}+N_{2}^{2}+N_{3}^{2})}$.
We start from $N^{(0)}=(5,2,0)$, a feasible point. 
Following Algorithm~\ref{algo1}, we compute $N^{(1)}=(4,2,1)$ and $N^{(2)}=(3,2,2)$ which is a minimizer. We take $N^{*}=(3,2,2)$. Now, we solve
$\max_{\scriptstyle
u_{1} \in F_{1},\dots,u_{5} \in F_{5}, 
\sum_{k=1}^{5}{u_{k}}=N^{*} 
}
{\sum_k 
{\langle \rho_{k},u_{k} \rangle}}$.
We obtain $u_{1}^{*}=\left[ 1,0,0 \right]$, $u_{2}^{*}=\left[ 1,0,1 \right]$, $u_{3}^{*}=\left[ 0,1,0 \right]$, $u_{4}^{*}=\left[ 1,0,1 \right]$, $u_{5}^{*}=\left[ 0,1,0 \right]$.
Applying Lemma~\ref{linear-feasibility},
we obtain the linear inequalities 
$y_{1}^{*}-y_{2}^{*} \leq 3/2  $,
$0 \leq y_{1}^{*}-y_{3}^{*}$
and
$-1 \leq y_{2}^{*}-y_{3}^{*} \leq -1/2$.
In particular, $y^{*}=(3/4,0,3/4)$ is an optimal solution.
\end{example}

\subsection{A particular case : theory of majorization}

Algorithm~\ref{algo-bilevel} can be accelerated in the particular case $\forall k \in \cro{K}, \; \mathcal{J}_{k}= \emptyset$,
that is $F_{k}=\left\lbrace u_{k} \in \{ 0;1 \}^{n} | \sum_{i=1}^{n} u_{k}(i)=R_{k} \right\rbrace$.

As previously, an important step of the maximization of the function $g$ consists in being able to know whether a point belongs to $\sum_{k}F_{k}$ or not.
In this particular case, we can use the {\em majorization order}~\cite{olkin1979inequalities}. 
For every $x \in \R^{n}$, denote by $x_{[1]} \geq \cdots \geq x_{[n]}$ the coordinates of $x$ arranged in nonincreasing order. A vector $x \in \R^{n}$ is said to be {\em majorized} by another vector $y \in \R^{n}$, denoted $x \prec y$,
if
$\sum_{i=1}^{n}{x_{i}}=\sum_{i=1}^{n}{y_{i}} $
and
$\forall 1 \leq k \leq n-1$, 
$\sum_{i=1}^{k}{x_{[i]}} \leq  \sum_{i=1}^{k}{y_{[i]}}$.

We have the following result.
\begin{theorem}[Gale-Ryser , see \protect{\cite[Th.\ 7.C.1]{olkin1979inequalities}}]
Let $a \in \N^k$ and $b \in \N^n$ be two integer vectors with nonnegative values. 
Let $a^* \in \N^n$ defined by $a^*_i = \# {j \mid a_j \geq i}$. Then, the following assertions are equivalent:
\begin{enumerate}
\item $b \prec a^*$
\item There exists a matrix $U \in \mathcal{k,n}(\Z)$ such that for each $i,j$, $u_{ij} \in \{0;1 \}$, $\forall 1 \leq i \leq k, \; \sum_{j=1}^n u_{ij}=a_i$ and $\forall 1 \leq j \leq n, \; \sum_{i=1}^k u_{ij}=b_j$
\end{enumerate}
\label{galeryser}
\end{theorem}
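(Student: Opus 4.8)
The plan is to establish the two implications separately, drawing on classical majorization theory~\cite{olkin1979inequalities}. Throughout I would assume, with no loss of generality, that the entries of $a$ and of $b$ are listed in nonincreasing order (permuting the rows or the columns of $U$ affects neither hypothesis), and that $a_i \leq n$ for every $i$, a condition that is anyway forced by assertion (2). I would first record two elementary identities obtained by exchanging the order of summation in $a^*_t = \#\{i \mid a_i \geq t\}$: namely $\sum_{t \geq 1} a^*_t = \sum_i a_i$ and $\sum_{t=1}^m a^*_t = \sum_i \min(a_i, m)$ for each $m$. Observe also that $a^*$ is already nonincreasing, so that $a^*_{[t]} = a^*_t$.

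For $(2) \Rightarrow (1)$, I would use a direct counting estimate. Adding up all the entries of $U$ by rows and by columns gives $\sum_j b_j = \sum_i a_i = \sum_t a^*_t$, so the total masses coincide. For the partial sums, fix $m$ and consider the $m$ columns of $U$ with the largest column sums; the total number of ones they contain is $\sum_{t=1}^m b_{[t]}$, and each row $i$ contributes at most $\min(a_i, m)$ of them, because it holds only $a_i$ ones in all and meets only $m$ columns. Summing over rows and invoking the identity above yields $\sum_{t=1}^m b_{[t]} \leq \sum_i \min(a_i,m) = \sum_{t=1}^m a^*_{[t]}$, which is exactly $b \prec a^*$.

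For the converse $(1) \Rightarrow (2)$, the idea is to exhibit one extremal matrix and then deform it. I would start from the left-justified matrix $U^0$ whose $i$-th row carries ones precisely in columns $1, \dots, a_i$; its column sums are exactly $a^*$, since column $t$ then holds $\#\{i \mid a_i \geq t\} = a^*_t$ ones. Because $b \prec a^*$ and all data are integer, majorization theory provides a finite chain of unit \emph{Robin Hood} transfers carrying $a^*$ to $b$, each step replacing two current column-sum values $c_p > c_q$ by $c_p - 1$ and $c_q + 1$~\cite[\S 2.B]{olkin1979inequalities}. The crucial observation is that any such transfer is realizable by a single $0/1$ swap that leaves every row sum unchanged: when the current matrix has column sums $c$ with $c_p > c_q$, column $p$ carries strictly more ones than column $q$, hence some row $i$ satisfies $u_{ip} = 1$ and $u_{iq} = 0$, and setting $u_{ip} := 0$, $u_{iq} := 1$ performs exactly the desired transfer while preserving the $0/1$ structure and the row-sum vector $a$. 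Performing these swaps along the chain turns $U^0$ into a matrix with row sums $a$ and column sums $b$.

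The main obstacle lies in this converse direction, precisely at the junction between the majorization combinatorics and the matrix swaps: one must confirm that, at every stage, the transfer prescribed by the chain is \emph{executable}, that is, that the inequality $c_p > c_q$ on current column sums is the very thing guaranteeing a row with $u_{ip} = 1$ and $u_{iq} = 0$. This is where a naive argument could fail if mass were moved in the wrong direction; the safeguard is that a Robin Hood transfer only shifts a unit from a strictly larger coordinate to a strictly smaller one, which is exactly the situation in which such a row is forced to exist. A secondary technical point to verify is the integer transfer lemma itself, namely that $b \prec a^*$ on integer data can be realized by \emph{unit} steps rather than fractional $T$-transforms; this is standard and available from the cited reference.
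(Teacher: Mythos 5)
Your proof is correct. Note first that the paper does not prove this statement at all: it is imported as a classical result, with a pointer to Marshall--Olkin (Th.~7.C.1), and only the corollary identifying $\sum_k F_k$ with $\{N \prec \sum_r p_r f_r\}$ is argued. So there is no in-paper argument to compare against; what you have written is a self-contained rendition of the standard Gale--Ryser proof, and both halves check out. The direction $(2)\Rightarrow(1)$ by double counting (each row meets the $m$ heaviest columns in at most $\min(a_i,m)$ ones, and $\sum_{t\le m} a^*_t=\sum_i\min(a_i,m)$) is clean and complete. The direction $(1)\Rightarrow(2)$ correctly starts from the left-justified matrix, whose column sums are exactly $a^*$, and your pigeonhole observation that $c_p>c_q$ forces a row with $u_{ip}=1$, $u_{iq}=0$ is precisely what makes each unit transfer executable as a $0/1$ swap preserving row sums. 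Two small caveats you should make explicit rather than leave implicit: (i) the hypothesis $a_i\le n$ is genuinely needed, not merely convenient --- without it the left-justified matrix does not exist and the equivalence can fail (e.g.\ $k=1$, $a_1=n+1$, $b=(1,\dots,1)$ satisfies $b\prec a^*$ but admits no such $U$), so it is an assumption on the data, automatically satisfied in the paper's application since $R_k\le T\le n$; and (ii) the integer unit-transfer lemma (that $b\prec a^*$ on integer vectors is realized by a finite chain of unit Robin Hood steps, each from a strictly larger to a strictly smaller coordinate) is the one external ingredient you invoke without proof; it is indeed standard (Muirhead's lemma), but the reference you give (\S 2.B of Marshall--Olkin) covers real $T$-transforms, so the integer refinement deserves its own citation or a short induction.
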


\begin{corollary}
Denoting by $f_{r}=(1,\dots,1,0,\dots,0)$ the vector with exactly $r$ $1$ and 
by $p_{r}= \# \lbrace k | R_{k}=r \rbrace$, for $1 \leq r \leq n$, we have
$\sum_{k}{F_{k}} = \lbrace N \in \N^{n} | N \prec \sum_{r=1}^{n}{p_{r}f_{r}} \rbrace$.
\label{cor-galeryser}
\end{corollary}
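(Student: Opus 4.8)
The plan is to obtain the corollary as a direct translation of the Gale-Ryser theorem (Theorem~\ref{galeryser}), applied with the row-sum vector $a=(R_1,\dots,R_K)\in\N^K$ and the column-sum vector $b=N\in\N^n$. The key observation, specific to the case $\mathcal{J}_k=\emptyset$, is that assertion~(2) of Theorem~\ref{galeryser} is nothing but a restatement of the membership $N\in\sum_k F_k$. Indeed, a $0$--$1$ matrix $U$ with row sums $\sum_j u_{kj}=R_k$ and column sums $\sum_k u_{kj}=N_j$ is the same thing as a family of vectors $u_k\in\{0,1\}^n$, one per row, each satisfying $\sum_i u_k(i)=R_k$ (that is, $u_k\in F_k$) and $\sum_k u_k=N$. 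So first I would spell out this equivalence in both directions: given a decomposition $N=\sum_k u_k^*$ with $u_k^*\in F_k$, stack the $u_k^*$ as the rows of a matrix $U$; conversely, read off the rows of any admissible $U$ to recover such a decomposition.

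Next I would identify the conjugate vector $a^*$ appearing in Theorem~\ref{galeryser} with $\sum_{r=1}^n p_r f_r$. By definition $a^*_i=\#\{k\mid R_k\geq i\}$, which, grouping customers by the value of their request number, equals $\sum_{r\geq i} p_r$. On the other hand, since $f_r$ carries a $1$ in coordinate $i$ exactly when $i\leq r$, the $i$-th coordinate of $\sum_{r=1}^n p_r f_r$ is again $\sum_{r\geq i} p_r$. Hence the two vectors coincide coordinatewise, and assertion~(1) of Theorem~\ref{galeryser}, namely $N\prec a^*$, becomes $N\prec\sum_{r=1}^n p_r f_r$.

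Combining these two identifications, the equivalence (1)$\Leftrightarrow$(2) of Gale-Ryser reads precisely $N\in\sum_k F_k\iff N\prec\sum_{r=1}^n p_r f_r$, and taking the set of all $N\in\N^n$ for which this holds yields the stated equality of sets. I do not expect any genuine obstacle: the argument is essentially a dictionary between the combinatorial language of $0$--$1$ matrices with prescribed margins and the geometric language of Minkowski sums of hypersimplex vertex sets. The only points requiring a little care are the counting identity $a^*_i=\sum_{r\geq i} p_r$ and the remark that the equal-sum constraint built into the majorization order $\prec$ is automatically satisfied, since every $N\in\sum_k F_k$ obeys $\sum_i N_i=\sum_k R_k=\sum_i a^*_i$.
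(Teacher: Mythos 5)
Your proposal is correct and follows essentially the same route as the paper: the paper's proof likewise observes that $N\in\sum_k F_k$ is equivalent to the existence of a $0$--$1$ matrix with row sums $R_k$ and column sums $N_i$, and then invokes the Gale--Ryser theorem. You merely make explicit two steps the paper leaves implicit (the dictionary between such matrices and decompositions $N=\sum_k u_k^*$, and the identity $a^*=\sum_r p_r f_r$), both of which you verify correctly.
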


\begin{proof}
A vector $N$ belongs to $\sum_k F_k$ if and only if for each $i \in \cro{n}$, $N_i$ corresponds to the sum of the coefficients of the $i$-th column of a matrix of size $K \times n$ with coefficients in $\{ 0;1 \}$ and such that the sum of the coefficients of the $k$-th line is $R_k$.
We conclude by~\ref{galeryser}.
\end{proof}

\begin{example}
Consider Example~\ref{ex-1}.
 We have $p_{1}=3$, $p_{2}=2$ and $p_{3}=0$. So $N$ is feasible iff $N$ verifies $N \prec (5,2,0)$ .
\end{example}

Like for Algorithm~\ref{algo-bilevel}, we need to know for a given $N \in \sum_k F_k$ whether $N-e_i+e_j \in \sum_k F_k$ for each $i,j \in \cro{n}$. 
It is possible to answer to this question in polynomial time in $n$ by sorting $N-e_i+e_j$ for each $i,j$ and by checking the condition $N-e_i+e_j \prec N^{\max}$. 
The time complexity of such a procedure is $O(n^3 \log(n))$. 
However, it can be accelerated thanks to the following result.

\begin{lemma} \label{neighbour-major}
Let $N \in \sum_k F_k$, and $i,j \in \cro{n}$. 
Let $S$ be the function defined on $\R^n \times \cro{n}$ such that $\forall x \in \R^n, \forall k \in \cro{n}$, $S(x,k)$ is the sum of the $k$ largest values of the coordinates of $x$.
Suppose finally that $N_j$ is the $k_j$-th largest value of the coordinates of $N$ (if $k_j>1$, then we suppose that the $k_j-1$-th largest value of $N$ is strictly bigger than $N_j$),
and that $N_i$ is the $k_i$-th largest value of the coordinates of $N$ (if $k_i<n$, then we suppose that the $k_i+1$-th largest value of $N$ is strictly smaller than $N_j$).
Then $N-e_i+e_j \in \sum_k F_k$ if and only if $N_i > 0$ and, either $N_i > N_j$ or $\forall k_j \leq k \leq k_i, S(N,k) < S(N^{\max},k)$.
\end{lemma}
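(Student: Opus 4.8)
The plan is to reduce the membership test to the majorization criterion of Corollary~\ref{cor-galeryser} and then to control how the sorted partial sums $S(\cdot,k)$ move under the perturbation $N \mapsto N-e_i+e_j$. By Corollary~\ref{cor-galeryser}, $\sum_k F_k = \{N\in\N^n : N \prec N^{\max}\}$ with $N^{\max}=\sum_{r=1}^{n} p_r f_r$, so $N-e_i+e_j \in \sum_k F_k$ if and only if $N-e_i+e_j \in \N^n$ and $N-e_i+e_j \prec N^{\max}$. Two ingredients are immediate: nonnegativity of $N-e_i+e_j$ forces $N_i\geq 1$, i.e. $N_i>0$ (the first clause of the criterion), and the move preserves the total $\sum_t N_t$, so the equal-sum requirement of $\prec$ is inherited from $N\prec N^{\max}$. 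Hence everything reduces to the partial-sum inequalities $S(N-e_i+e_j,k)\leq S(N^{\max},k)$ for $1\leq k\leq n-1$, knowing that $S(N,k)\leq S(N^{\max},k)$ already holds.

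First I would dispose of the case $N_i>N_j$. Here the move takes one unit from the strictly larger coordinate $i$ and gives it to the strictly smaller coordinate $j$: if $N_i=N_j+1$ the result is merely a permutation of $N$, while if $N_i\geq N_j+2$ it is a genuine Robin Hood (Dalton) transfer. In either case $N-e_i+e_j \prec N$, so by transitivity $N-e_i+e_j \prec N^{\max}$ and the vector is feasible. This is exactly why the criterion imposes no further condition once $N_i>N_j$ (and $N_i>N_j\geq 0$ makes $N_i>0$ automatic there).

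The heart of the argument is the spreading case $N_i\leq N_j$ with $N_i>0$, where I would compute the increment $S(N-e_i+e_j,k)-S(N,k)$ in two elementary steps. Raising coordinate $j$, whose value $N_j$ is first attained at rank $k_j$, by one increases $S(\cdot,k)$ by exactly $1$ for $k\geq k_j$ and leaves it unchanged for $k<k_j$; the first-occurrence hypothesis $N_{[k_j-1]}>N_j$ is precisely what keeps the raised coordinate out of the top $k$ when $k<k_j$. Lowering coordinate $i$, whose value $N_i$ is last attained at rank $k_i$, by one decreases $S(\cdot,k)$ by exactly $1$ for $k\geq k_i$ and leaves it unchanged for $k<k_i$, since for $k<k_i$ one can always choose an optimal top-$k$ set avoiding $i$. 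The two one-coordinate modifications compose additively because raising $j$ does not move the last-occurrence rank $k_i$ of the value $N_i$ (as $N_j+1>N_i$), and $N_i\leq N_j$ forces $k_j<k_i$. Combining, $S$ increases by $1$ exactly on the indices $k_j\leq k< k_i$ and is unchanged elsewhere.

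It remains to read off the criterion: for $k$ outside $\{k_j,\dots,k_i-1\}$ the inequality $S(N-e_i+e_j,k)\leq S(N^{\max},k)$ is inherited from $N\prec N^{\max}$, while for each $k$ in that range it becomes $S(N,k)+1\leq S(N^{\max},k)$, i.e. the strict inequality $S(N,k)<S(N^{\max},k)$ since the entries are integers. This yields the stated partial-sum condition over the relevant range of indices. The step I expect to be the main obstacle is the precise bookkeeping of that range: both the additivity of the two perturbations and the exact endpoints depend delicately on the first-occurrence/last-occurrence conventions fixed in the hypotheses, and feasibility of $N-e_i+e_j$ turns on having the strict inequality on exactly the set of indices where $S$ genuinely increases — so the correct upper cutoff (the index just below $k_i$) must be identified with care.
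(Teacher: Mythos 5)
Your proof is correct and follows essentially the same route as the paper's: reduction to the majorization criterion of Corollary~\ref{cor-galeryser}, a Robin Hood transfer when $N_i > N_j$, and partial-sum bookkeeping when $N_i \le N_j$. The one substantive point is that your bookkeeping --- which is right --- shows that $S(\cdot,k)$ increases by $1$ exactly for $k_j \le k \le k_i-1$ and is \emph{unchanged} at $k=k_i$, so the equivalence you obtain requires strictness only on the range $k_j \le k \le k_i-1$. That is not the statement as printed, which demands strictness up to and including $k=k_i$; as printed, the ``only if'' direction fails (take $n=2$, $N^{\max}=(2,0)$, $N=(1,1)$, $i=1$, $j=2$, so that $k_j=1$, $k_i=2$: the vector $N-e_1+e_2=(0,2)$ is feasible, yet $S(N,2)=S(N^{\max},2)$; more generally, whenever $k_i=n$ the stated condition can never hold because $S(N,n)=S(N^{\max},n)$ always). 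The paper's own proof of the converse likewise only invokes strictness for $k\le k_i-1$ and establishes the equality $S(N-e_i+e_j,k_i)=S(N,k_i)$, and its forward direction is garbled precisely at this endpoint, so your corrected range is the right reading of the lemma. The caveat you flag about identifying ``the correct upper cutoff'' is therefore not a gap in your argument but an off-by-one in the statement itself; with the range amended to $k_j\le k\le k_i-1$, your proof is complete.
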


\begin{proof}
Suppose $N-e_i+e_j \in \sum_k F_k$. 
Then $N_i - 1 \geq 0$ and $N_i > 0$.
Moreover, suppose $N_i \leq N_j$. Then, $N_i - 1 < N_j+1$ and $S(N,k)=S(N,k)+1$. Then, $S(N,k) < S(N,k)+1 = S(N^{\max},k)$.

Conversely, if $N_i > 0$, then all the coordinates of $N-e_i+e_j$ are nonnegative integers. 
If $N_i > N_j$, then we easily see that $N-e_i+e_j \prec N$. 
So $N-e_i+e_j \prec N^{\max}$ and $N-e_i+e_j \in \sum_k F_k$. 
Suppose that $N_i \leq N_j$.
Because we suppose that the $k-1$-th largest value of $N$ is strictly bigger than $N_j$, then $k_i > k_j$. We also suppose that $\forall k_j \leq k \leq k_i, S(N,k) < S(N^{\max},k)$.
The $k-1$-th largest value of $N$ is strictly bigger than $N_j$, so it is bigger than $N_j+1$. 
Consequently, we have for all $1 \leq l \leq k-1$, $S(N-e_i+e_j,l) = S(N,l) \leq S(N^{\max},l)$ (because $N \prec N^{\max}$). 
Moreover, $\forall k_j \leq k \leq k_i-1, S(N,k) < S(N^{\max},k)$. 
Because the $k_i+1$-th larger coordinate of $N$ is strictly smaller than $N_i$, then it is smaller than $N_i+1$ and we have $S(N-e_+e_j,k_i) = S(N,k_i) \leq S(N^{\max},k_i)$ and $\forall l \geq k_i+1$, $S(N-e_+e_j,l) = S(N,l) \leq S(N^{\max},l)$. 
Hence, $N-e_i+e_j \prec N^{\max}$ and $N-e_i+e_j \in \sum_k F_k$.
\end{proof}

To solve the bilevel problem~\ref{bilevel-telecom} in this specific case,
we need to find $u_1^* \in F_1, \dots, u_K^* \in F_K$ such that $\psi(N^*) = - \sum_k \langle \rho_k,u_k^* \rangle $.
In Algorithm~\ref{algo-bilevel}, such vectors $(u_k)^*$ are found in the same time as $N^*$. 
Then, to accelerate Algorithm~\ref{algo-bilevel}, we need to be able to solve this problem rapidly. 
In particular, to use a classical linear programming approach leads to a $O((Kn)^{3,5})$ time complexity, 
which is not acceptable. 
The problem to solve can be written:
\begin{problem} \label{mincostflow}
\[
\max_{\substack{u_1, \dots, u_K \in \{ 0;1 \}^n \\ \forall k, \; \sum_{i=1}^n u_k(i)= R_k \\ \forall i, \; \sum_{k=1}^K u_k(i) = N_i}} \sum_{k=1}^K \langle \rho_k,u_k \rangle 
\]
\end{problem}
We already mentioned in the proof of Theorem~\ref{galeryser} that the constraints of this linear program can be written $0 \leq u \leq 1, \; Au = b$, where $A$ is a totally unimodular matrix.
Therefore, the value of this problem is equal to the value of its continuous relaxation. 
Moreover, it can be interpreted as a minimum cost flow problem (see~\cite[Ch.\ 12]{schrijver2003combinatorial} for background). 
We define a bipartite graphs with vertices $i \in \cro{n}$ and $k \in \cro{K}$, and edges between each $i \in \cro{n}$ and each $k \in \cro{K}$. 
Each vertex $i \in \cro{n}$ has an incoming flow equal to $N_i$, whereas each vertex $k \in \cro{K}$ has an outgoing flow equal to $R_k$. 
Moreover, the capacity of each edge is $1$, 
meaning that each flow $u_k(i)$ satisfies $0 \leq u_k(i) \leq 1$, and a cost $-\rho_k(i)$ is associated to each edge. 
Hence, the problem consists in finding the flow $u$ minimizing the total cost in this graph. 
Plenty of algorithms exist to solve such a problem. 
In our case, we have $K \gg n$. 
According to Theorem~\ref{complexity-algo-bilevel},
Algorithm~\ref{algo-bilevel} needs $O(Rn^2(K+n))$ operations to solve Problem~\ref{bilevel-telecom}.
Notice that $K \leq R \leq nK$.
Therefore, in order to accelerate Algorithm~\ref{algo-bilevel} in the studied case,
we need an algorithm solving the flow problem with a complexity depending on $K$ in $K^\alpha$ with $\alpha <2$.

We can interpret the minimum cost flow problem as a minimum cost circulation problem, as presented in~\cite[Ch.\ 12]{schrijver2003combinatorial}. 
We introduce a sink $t$. 
We define an edge between each $k \in \cro{K}$ and $t$ of cost equal to $0$, with a lower-bound for the flow equal to $R_k$ and a capacity of $R_k$.
We also define an edge between $t$ and each $i \in \cro{n}$ of cost equal to $0$, with a lower-bound for the flow equal to $N^*_i$ and a capacity of $N^*_i$.
Such a graph is represented on Figure~\ref{fig-circulation}.

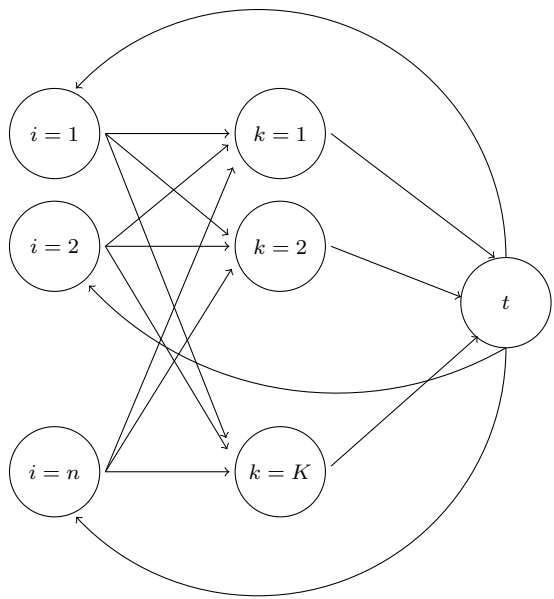
\begin{figure}[h]
\begin{center}
\begin{tikzpicture} [scale=1.5]
	\draw (-2,1.5) circle (0.4cm);
	\node at (-2,1.5) {$i=1$};
	\draw (-2,0.5) circle (0.4cm);
	\node at (-2,0.5) {$i=2$};
	\draw (-2,-1.5) circle (0.4cm);
	\node at (-2,-1.5) {$i=n$};
	\draw (0,1.5) circle (0.4cm);
	\node at (0,1.5) {$k=1$};
	\draw (0,0.5) circle (0.4cm);
	\node at (0,0.5) {$k=2$};
	\draw (0,-1.5) circle (0.4cm);
	\node at (0,-1.5) {$k=K$};
	\draw (2,0) circle (0.4cm);
	\node at (2,0) {$t$};
	\draw[->] (-1.55,1.5)--(-0.45,1.5);
	\draw[->] (-1.55,1.5)--(-0.46,0.6);
	\draw[->] (-1.55,1.5)--(-0.48,-1.2);
	\draw[->] (-1.55,0.5)--(-0.45,0.5);
	\draw[->] (-1.55,0.5)--(-0.46,1.4);
	\draw[->] (-1.55,0.5)--(-0.47,-1.3);
	\draw[->] (-1.55,-1.5)--(-0.45,-1.5);
	\draw[->] (-1.55,-1.5)--(-0.43,0.3);
	\draw[->] (-1.55,-1.5)--(-0.42,1.2);
	\draw[->] (0.45,1.5)--(1.9,0.4);
	\draw[->] (0.45,0.5)--(1.6,0.05);
	\draw[->] (0.45,-1.45)--(1.75,-0.3);
	\draw[->] (2,0.4) arc (0:137:2.2cm);
	\draw[->] (2,-0.4) arc (-60:-137:3cm);
	\draw[->] (2,-0.4) arc (0:-137:2.2cm);
\end{tikzpicture}
\end{center}
\caption{Minimum cost flow problem transformed in a minimum cost circulation problem. 
The flow in the edges between each $i$ and $k$ is in $[0,1]$, the flow in the edges between each $k$ and $t$ is equal to $R_k$, and the flow in the edges between $t$ and each $i$ is $N^*_i$.}
\label{fig-circulation}
\end{figure}

Such a graph has $|V| = K + n +1$ vertices and $|E| = Kn + K + n $ edges. 
The sum of the capacities of the different edges is $2R + Kn$. 
In~\cite[Sec.\ 3.3]{gabow1989faster}, an algorithm is proposed to solve such a problem.
Different complexity bounds of such an algorithm are given in~\cite[Th.\ 3.5]{gabow1989faster}. 
In the case $K \gg n$, the optimal vectors $u_1^*,\dots,u_K^*$ can be found in $O((Kn)^{3/2} \log((K+n)||\rho||_{\infty}))$.

We can now write an algorithm for solving the bilevel problem in this specific case. 
We need first to calculate $N^{\max}=\sum_{r=0}^n p_r f_r$, where $p_r$ is defined as in the statement of Theorem~\ref{galeryser},
and to find an initial point $N \in \sum_k F_k$.
We apply the same method as in Algorithm~\ref{algo1}. 
In order to calculate $g(N-e_i+e_j)$ for each $i,j \in \cro{n}$,
we sort the coordinate of $N$ in the decreasing order, 
and we use Lemma~\ref{neighbour-major} to decide whether $N-e_i+e_j \in F_k$ for all $i,j$. 
We use the same loop as in Algorithm~\ref{algo1} to compute an $N^*$ such that $g(N^*)$ is the maximal value of $g$ over $\sum_k F_k$. 
Then, we solve the minimum cost flow problem~\ref{mincostflow}, as described previously, to find the optimal $u_k^*$ and then we use Lemma~\ref{feasibility-graphs} to determine an optimal $y^*$. 
It leads to Algorithm~\ref{algo-major}. 
The function $Sort$ associates to a vector $x \in \R^n$ a couple $(y,ind)$, where $y$ is a permutation of $x$ such that $y_1 \geq \dots \geq y_n$ and $ind$ is such that $x_i = y_{ind(i)}$ for each $i \in \cro{n}$.
The function $S$ is defined by $S(x,k) = \sum_{i=1}^n x_i$.
The function $MinCostFlow$ associates to the different vectors $(\rho_k)_{k \in \cro{K}}$ the vectors $(u_k^*)_{k \in \cro{K}}$ solving the minimum cost flow problem~\ref{mincostflow}.
The functions $f^*$ and $ShortestPath2$ are defined as for Algorithm~\ref{algo-bilevel}.

\begin{algorithm} \footnotesize
\begin{algorithmic}
\REQUIRE $N \in \sum_k F_k$, $N^{\max}$, $\rho_k, \forall k \in \cro{K}$, $f^*$, $S$, $Sort$, $MinCostFlow$, $ShortestPath2$, $s \in \cro{n}$
\ENSURE $N^*$ optimal number of customers, $y^*$ optimal discount vector 
\STATE $s^{\max}_1 \leftarrow 0$
\FOR{$i=1$ \TO $n$}
\STATE $s^{\max}_i \leftarrow s^{\max}_i+N^{\max}_i$
\ENDFOR
\STATE $stop \leftarrow 0$
\WHILE{$stop = 0$}
\STATE $(N^{sort},ind) \leftarrow Sort(N)$;
\quad $s_1 = 0$
\FOR{$i=1$ \TO $n$}
\STATE $s_i \leftarrow s_i+N^{sort}(i)$
\FORALL{$j \in \cro{n}$}
\IF{$N(i)=0$}
\STATE $b(i,j) \leftarrow 0$
\ELSE
\STATE $b(i,j) \leftarrow 1$
\ENDIF
\ENDFOR
\IF{$s_i = s^{\max}_i$}
\FOR{$j=1$ \TO $i$}
\STATE $b(ind(i),ind(j)) \leftarrow 0$
\ENDFOR
\FOR{$j = i$ \TO $n$}
\STATE $b(ind(j),ind(i)) \leftarrow 0$
\ENDFOR
\ENDIF
\ENDFOR
\STATE $g_N \leftarrow f^*(N)$;
\quad $g^* \leftarrow \max_{u,v \in \cro{n}} b(u,v)f^*(N-e_u+e_v)$;
\;  $i,j \in \arg\max_{u,v \in \cro{n}} b(u,v)f^*(N-e_u+e_v)$
\IF{$g^* \leq g_N$} 
\STATE $stop \leftarrow 1$
\ENDIF
\ENDWHILE
\STATE $(u_1^*,\dots,u_K^*) \leftarrow MinCostFlow((\rho_k)_{k \in \cro{K}})$
\FORALL{$k \in \cro{K}$}
\FORALL{$i,j \notin \mathcal{J}_k$}
\IF{$u_k^*(i)=1$ \AND $u_k^*(j)=0$}
\STATE $w_{ij}^k \leftarrow \rho_k(i)-\rho_k(j)$
\ENDIF
\ENDFOR
\ENDFOR
\FORALL{$i,j \in \cro{n}$}
\STATE $w_{ij} \leftarrow \min_{k \in \cro{K}} w^{k}_{ij}$;
\qquad $k_{ij} \in \arg\min_{k \in \cro{K}} w^{k}_{ij}$
\ENDFOR
\STATE $M \leftarrow 1 + n \max_{i,j \in \cro{n}} w_{ij}$
\FORALL{$t\in \cro{n}$}
\IF{$t \neq s$ AND $w_{st}=+\infty$}
\STATE $w_{st}=M$
\ENDIF
\STATE $y^* \leftarrow Shortestpath2(w,s)$
\STATE $y^*_s \leftarrow 0$
\ENDFOR
\end{algorithmic}
\caption{Solving the bilevel problem, in the case of majorization} \label{algo-major}
\end{algorithm} 

\begin{theorem} \label{complexity-algo-major}
Let us define $||\rho||_{\infty} = \max_{k \in \cro{K}, i \in \cro{n}} |\rho_k(i)|$, $R = \sum_k R_k$, for each $k \in \cro{K}$ $n_k = n - \# \mathcal{J}_k$ and $\overline{R} = \sum_k R_k(n_k-R_k)$.
Algorithm~\ref{algo-major} is correct and returns a global optimizer in $O(Rn^2 +(Kn)^{3/2} \log((K+n)||\rho||_{\infty})+\overline{R} + n^3)$ time and $O(Kn +n^2)$ space.
\end{theorem}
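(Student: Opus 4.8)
The plan is to prove correctness and the complexity bound separately, viewing Algorithm~\ref{algo-major} as the specialization of Algorithm~\ref{algo-bilevel} in which the feasibility test ``$N-e_i+e_j\in\sum_k F_k$'' is realized through majorization instead of a graph search, and in which the optimal decomposition is recovered once, at the end, by a single min-cost-flow computation.

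\textbf{Correctness.} The \emph{while} loop is an instance of Murota's greedy procedure (Algorithm~\ref{algo1}) applied to the penalized objective $f^{*}$, which is $M$-concave by \cite[Th.\ 6.13.(4)]{murota2003discrete} and which, for $M$ large enough, agrees on $\sum_k F_k$ with the constrained step~1 problem $\max_{N\in B} f(N)$ of Theorem~\ref{decomposition}. By the local-to-global optimality of $M$-concave functions \cite[Th.\ 6.26]{murota2003discrete}, it suffices to check that at each pass the admissible swaps are computed correctly and that the stopping test coincides with global optimality. The admissible swaps are exactly the pairs $(i,j)$ with $N-e_i+e_j\in\sum_k F_k$; by Corollary~\ref{cor-galeryser} this set equals $\{N : N\prec N^{\max}\}$, and Lemma~\ref{neighbour-major} characterizes, in terms of the sorted coordinates of $N$ and the prefix sums $S(N,k)$ versus $S(N^{\max},k)$, precisely when a single swap stays in this set. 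I would first verify that the Boolean matrix $b$ assembled in the loop encodes this criterion exactly: $N(i)=0$ forbids decrementing coordinate $i$, while the tightness test $s_i=s^{\max}_i$ (that is $S(N,i)=S(N^{\max},i)$), composed with the permutation $ind$ returned by $Sort$, blocks precisely the swaps that would violate one of the majorization inequalities on the critical range $k_j\le k\le k_i$. Once $N^{*}$ is obtained, the call to $MinCostFlow$ returns an optimal solution of Problem~\ref{mincostflow}, i.e. vectors $u_k^{*}\in F_k$ with $\sum_k u_k^{*}=N^{*}$ and $\psi(N^{*})=-\sum_k\langle\rho_k,u_k^{*}\rangle$; the total unimodularity of the flow constraint matrix noted before Problem~\ref{mincostflow} guarantees an integral optimum. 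Finally, the construction of $w^k_{ij}$, $w_{ij}=\min_k w^k_{ij}$, the big-$M$ modification, and the call to $ShortestPath2$ are exactly the hypotheses of Theorem~\ref{feasibility-graphs}, which yields a vector $y^{*}$ making each $u_k^{*}$ optimal for its low-level problem. By Theorem~\ref{decomposition}, $(N^{*},y^{*})$ then solves Problem~\ref{bilevel-telecom}.

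\textbf{Complexity.} I would account for the cost phase by phase. Computing $N^{\max}=\sum_r p_r f_r$ and the prefix sums $s^{\max}$ costs $O(K+n)$. Each pass of the \emph{while} loop calls $Sort$ in $O(n\log n)$, fills the prefix sums and the matrix $b$ in $O(n^2)$, and scans the $O(n^2)$ candidate swaps, each evaluation of $f^{*}(N-e_u+e_v)$ costing $O(1)$ since $f$ is separable and only two coordinates change; hence one pass is $O(n^2)$. As in the proof of Theorem~\ref{complexity-algo-bilevel}, the number of passes is bounded by $\max\{\|x-y\|_1 : x,y\in\sum_k F_k\}\le 2R$, so the loop contributes $O(Rn^2)$. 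The $MinCostFlow$ step is solved by the scaling algorithm of \cite[Th.\ 3.5]{gabow1989faster} in $O((Kn)^{3/2}\log((K+n)\|\rho\|_{\infty}))$. Assembling the weights touches only the $\overline{R}=\sum_k R_k(n_k-R_k)$ finite entries, costing $O(\overline{R})$, and the final $ShortestPath2$ on a graph with $n$ vertices and at most $n^2$ edges runs in $O(n^3)$ by Bellman--Ford. Summing yields $O(Rn^2+(Kn)^{3/2}\log((K+n)\|\rho\|_{\infty})+\overline{R}+n^3)$. For space, the flow network uses $O(Kn)$ edges and the weight arrays together with $G$ use $O(n^2)$, giving $O(Kn+n^2)$.

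\textbf{Main obstacle.} The delicate step is the correctness of the $O(n^2)$ construction of the admissibility matrix $b$: one must show that the mixed bookkeeping between original indices and sorted order (through $ind$), combined with the single tightness test $s_i=s^{\max}_i$, reproduces \emph{simultaneously for every pair $(i,j)$} the condition ``$N_i>0$ and either $N_i>N_j$ or $S(N,k)<S(N^{\max},k)$ for all $k_j\le k\le k_i$'' of Lemma~\ref{neighbour-major}. Establishing this equivalence in one sorted sweep, rather than re-testing majorization for each of the $n^2$ swaps in $O(n\log n)$, is exactly what lowers the per-iteration cost to $O(n^2)$, and the announced $O(Rn^2)$ contribution of the main loop rests on it.
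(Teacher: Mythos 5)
Your proposal follows essentially the same route as the paper's proof: correctness via Theorem~\ref{decomposition}, Corollary~\ref{cor-galeryser}, Lemma~\ref{neighbour-major}, the min-cost-flow step and Theorem~\ref{feasibility-graphs}, and the same phase-by-phase complexity accounting ($O(Rn^2)$ for the greedy loop, $O((Kn)^{3/2}\log((K+n)\|\rho\|_\infty))$ for the flow, $O(\overline{R})$ for the weights, $O(n^3)$ for Bellman--Ford). The ``main obstacle'' you flag --- that the single sorted sweep and tightness test $s_i = s^{\max}_i$ really reproduce the full criterion of Lemma~\ref{neighbour-major} for all pairs $(i,j)$ simultaneously --- is a fair observation, but the paper's own proof asserts the $O(n^2)$ evaluation of $b$ without elaborating on this point either, so your treatment is if anything slightly more careful than the original.
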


\begin{proof} 
According to Theorem~\ref{decomposition}, Theorem~\ref{galeryser}, Lemma~\ref{neighbour-major} and Algorithm~\ref{algo1}, 
this algorithm returns an optimal solution $N^*$ of the high-level problem and an optimal discount vector $y^*$. 
Similarly as in the proof of Algorithm~\ref{algo-bilevel}, the number of calls of the "while" loop is bounded by $R$. 
The function $Sort$ needs $O(n \log(n))$ time and space operations. 
$O(n^2)$ operations are needed to evaluate the vector $b$, 
then the global time complexity of the "while" loop is $O(Rn^2)$ whereas the space complexity is $O(n^2)$. 
Then, the optimal vectors $u_1^*,\dots,u_K^*$ can be obtained in $O((Kn)^{3/2} \log((K+n)||\rho||_{\infty}))$ time and $O(Kn)$ space. 
By calculating only the finite values of $w_{ij}^k$ (which are not necessary stored here), the number of operations needed to determine each $w_{ij}$ and $k_{ij}$ is $O(\overline{R})$, with $\overline{R} = \sum_k R_k (n_k - R_k)$ and for each $k \in \cro{K}$, $n_k = n_\# \mathcal{J}_k$.
We need only $O(n^2)$ space to store the values $w_{ij}$ and $k_{ij}$.  
Finally, the vector $y^*$ can be found by using the Ford-Bellman algorithm in a graph of $n$ vertices and $n^2$ edges, that is in time complexity of $O(n^3)$. 
\end{proof}

In the worst case, we have $R=Kn$ and $\overline{R}=Kn^2$. 
Then, the time complexity of Algorithm~\ref{algo-major} is $O(Kn^3 +(Kn)^{3/2} \log((K+n)||\rho||_{\infty}))$
If the number of bits needed to write $||\rho||_{\infty}$ is polynomial in $n$ and if $K \gg n$, then Algorithm~\ref{algo-major} is faster than Algorithm~\ref{algo-bilevel}. 
We finally notice that a minimum cost flow problem is strongly polynomial time solvable, 
and it is then possible to adapt Algorithm~\ref{algo-major} to return an optimal $y^*$ in strongly polynomial time. 
However, Algorithm~\ref{algo-major} does not go faster than Algorithm~\ref{algo-bilevel} in this case.

\section{The general algorithm}\label{sec-general}

In this section, we come back to the general bilevel problem \ref{highlevel} proposed in Section~\ref{sec-model}, and extend Algorithm~\ref{algo-bilevel} to it.
In the low level problem of each customer, the consumptions for different contents verify the constraints $\forall a \in \cro{A}, \sum_{t=1}^{T}{u_{k}^{a}(t)}=R_{k}^{a}$, $\forall t \in \mathcal{I}_{k}^{a}, a \in \cro{A}, u_{k}^{a}(t)=0$ and $ \forall t \in \cro{T}, \sum_{a \in \cro{A}}{u_{k}^{a}(t)} \leq 1$. 
We make the assumption that for each customer $k$, the sets of possible instants 
at which this customer makes a request for the different applications are disjoint,
meaning that for any two applications $a\neq a'$, the complements of $\mathcal{I}_{k}^{a}$ 
and $\mathcal{I}_{k}^{a'}$ in $\cro{T}$ have an empty intersection.
Then the constraint $ \forall t \in \cro{T}, \sum_{a \in \cro{A}}{u_{k}^{a}(t)} \leq 1$ is automatically verified and the low-level problem of each customer can be separated into different optimization problems corresponding to the consumption vector $u_{k}^{a}$ of each customer $k$ for each application $a$.
Each of these problems takes the following form:
\begin{problem}\label{lowlevelappli}
\begin{equation} 
\max_{u_{k}^{a}\in \{0,1\}^T}  \sum_{t=1}^{T}{\left[ \rho_{k}^{a}(t)+\alpha_{k}^{a} y^{a,b}(t,L_{t}^{k}) \right] u_{k}^{a}(t)}
\end{equation}
\[
\text{s.t. } \sum_{t=1}^{T}{u_{k}^{a}(t)}=R_{k}^{a}, \quad \enspace 
\forall t \in \mathcal{I}_{k}^{a}, a \in \cro{A}, u_{k}^{a}(t)=0\enspace .
\]
\end{problem}

We denote by $F_{k}^{a}$ the feasible set of this problem. 
The above assumption (that the 
complements of $\mathcal{I}_{k}^{a}$ 
and $\mathcal{I}_{k}^{a'}$ have an empty intersection)
is relevant in particular if only one kind of application is sensitive to price incentives.
For instance, requests for downloading data can be anticipated (see \cite{tadrous2013pricing}) and it makes sense to assume that customers are only sensitive to incentives for this kind of contents. In this case, the assumption means that customers wanting to download data can shift their consumption only at instants when they do not request another kind of content. 

Moreover, under this assumption, the decomposition theorem is still valid and Problem~\ref{highlevel} can be solved with the following method:

\begin{theorem}[Decomposition (general case)] \label{decomposition-general}
The bilevel problem~\ref{highlevel} can be solved as follows:
\begin{enumerate}
\item Find an optimal solution $(N^{a,b})^*$ to the high level problem with unknown $N^{a,b}$ for each $a \in \cro{A}$, $b \in \cro{B}$:
\begin{problem} \label{high-level-general}
\begin{align*} 
& \max_{N^{a,b} \in \sum_{k} F_k^a}  \sum_{t,l}{ \left( \sum_{a\in \cro{A}}{ \sum_{b \in \cro{B}} {\gamma_{b}N^{a,b}(t,l)s_{l}^{a,b}(N(t,l))}} \right)}  \\
& \text{ s.t. } \forall t,l, \;  N(t,l) = \sum_{a \in \cro{A}} \sum_{b \in \cro{B}} N^{a,b} (t,l) \quad \text{and} \quad \forall t,l, \; N(t,l) \leq N_l^C \enspace .
\end{align*}
\end{problem}
\item For each $a \in \cro{A}$ and $b \in \cro{B}$, find vectors $((u_k^a)^*)_{k \in \mathcal{K}_b}$ solutions of the following problem:
\[
\max_{\substack{(u_k^a \in F_k^a)_{k \in \mathcal{K}_b} \\ \sum_{k \in \mathcal{K}_b} u_k^a = (N^{a,b})^*}} 
\sum_{k \in \mathcal{K}_b}{\langle \rho_{k}^a,u_{k}^a \rangle}  \enspace .
\]
\item Find for each $a \in \cro{A}$ and $b \in \cro{B}$ a vector $y_{a,b}^{*}$ such that $\forall k \in \mathcal{K}_b$, $$(u_k^a)^* \in \arg\max_{u_k^a \in F_k^a} \langle \rho_k^a,u_k^a \rangle$$. 
\end{enumerate}
\end{theorem}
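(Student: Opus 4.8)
The plan is to mirror the proof of Theorem~\ref{decomposition}, exploiting the fact that the disjointness hypothesis makes the low-level problem separate completely across application–contract pairs. First I would observe that, since the complements of the sets $\mathcal{I}_{k}^{a}$ are pairwise disjoint in $\cro{T}$, the coupling constraint $\sum_{a}u_{k}^{a}(t)\leq 1$ is automatically inactive, so that for each customer $k$ the response vector $u_{k}^{a}$ is an optimal solution of the single-application problem~\ref{lowlevelappli}, independently of the responses for the other applications. Consequently, the only coupling between distinct pairs $(a,b)$ enters through the high-level objective (which depends on the aggregate $N(t,l)=\sum_{a,b}N^{a,b}(t,l)$) and through the congestion constraint $N(t,l)\leq N_{l}^{C}$, and not through the feasibility of the individual traffic vectors $N^{a,b}$.

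Next I would extend the notion of feasibility of Definition~\ref{def-feasible}: a family $(N^{a,b})_{a,b}$ is feasible if there exist discounts $y^{a,b}$ and responses $u_{k}^{a}$ optimal for~\ref{lowlevelappli} with $N^{a,b}=\sum_{k\in\mathcal{K}_{b}}u_{k}^{a}$ (in the abstract notation). Because the provider announces an independent discount $y^{a,b}$ for each pair $(a,b)$, and because the separated low-level problems do not interact, the family $(N^{a,b})$ is feasible if and only if each $N^{a,b}$ is feasible for the subproblem over the customers $\mathcal{K}_{b}$ and the application $a$. Applying Lemma~\ref{lemma-feasible} to each such subproblem, this holds if and only if $N^{a,b}\in\sum_{k\in\mathcal{K}_{b}}F_{k}^{a}$ for every $(a,b)$. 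This identifies the feasible set of the high-level problem with the product $\prod_{a,b}\bigl(\sum_{k\in\mathcal{K}_{b}}F_{k}^{a}\bigr)$, which justifies Step 1 of the theorem, the congestion constraint being imposed afterwards on the aggregate $N$.

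I would then establish Steps 2 and 3 by running the single-application arguments in parallel over the pairs $(a,b)$. For Step 2, the optimal decompositions $(u_{k}^{a})^{*}$ of each $(N^{a,b})^{*}$ are obtained from Lemma~\ref{lemma-infconv}: the minimizers of the inf-convolution $\psi$ built from the functions $\varphi_{k}^{a}$ attached to application $a$ and contract $b$ are exactly the vectors that can be simultaneously supported by a common price, and they realize $\psi((N^{a,b})^{*})=-\sum_{k\in\mathcal{K}_{b}}\langle\rho_{k}^{a},(u_{k}^{a})^{*}\rangle$. For Step 3, Lemma~\ref{linear-feasibility} guarantees, for each $(a,b)$ separately, the existence of a price $y_{a,b}^{*}$ supporting all the responses $(u_{k}^{a})^{*}$ at once, described by the same system of difference inequalities. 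Finally, just as in Corollary~\ref{lemma-relax}, the homogeneity of each tropical polynomial $P_{k}^{a}$ (of degree $R_{k}^{a}$) lets me translate every $y_{a,b}^{*}$ by a constant vector to enforce $y_{a,b}^{*}\in\R_{+}^{T\times L}$ without changing the induced responses, so that the recovered discounts are admissible for Problem~\ref{highlevel}.

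The main obstacle, and the only genuinely new point compared with Theorem~\ref{decomposition}, is the decoupling step: one must check carefully that the disjointness hypothesis really renders $\sum_{a}u_{k}^{a}(t)\leq 1$ redundant, so that the low-level optimum factorizes and the feasible set becomes a product of Minkowski sums on which the per-pair lemmas apply verbatim. Once this reduction is in place, Steps 1--3 are literally the per-pair instances of the single-application decomposition, and no further tropical or discrete-convexity input is needed beyond Lemmas~\ref{lemma-feasible}, \ref{lemma-infconv} and~\ref{linear-feasibility}.
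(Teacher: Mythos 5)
Your proposal is correct and follows essentially the same route as the paper's own proof: the disjointness hypothesis on the complements of the $\mathcal{I}_k^a$ makes the constraint $\sum_a u_k^a(t)\leq 1$ redundant, so the low-level problems decouple across pairs $(a,b)$, Lemma~\ref{lemma-feasible} applies per pair to identify the feasible set with the product of Minkowski sums, and Steps 2 and 3 are the per-pair instances of the argument of Theorem~\ref{decomposition}. If anything, your write-up is more explicit than the paper's (which states the independence and cites the single-application case), and your closing remark about translating each $y_{a,b}^*$ to enforce nonnegativity is a detail the paper leaves implicit.
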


\begin{proof}
The different problems corresponding for each $a \in \cro{A}$, for each $b \in \cro{B}$ and for each $k \in \mathcal{K}_b$ to Problem~\ref{lowlevelappli} are independent. 
Thus, according to Lemma~\ref{lemma-feasible}, the global bilevel program consists in solving Problem \ref{high-level-general}. 
Moreover, the optimal decomposition of $(N^{a,b})^*$ and the optimal price vector $(y^{a,b})^*$ are totally independent for each $a \in \cro{A}$ and $b \in \cro{B}$. 
Then, the proof of the last two parts in the theorem is the same as in Theorem~\ref{decomposition}.  
\end{proof}

The last two parts of Theorem~\ref{decomposition-general} are independent for each $a \in \cro{A}$ and $b \in \cro{B}$. 
Thus, they can be solved similarly as in the case of one kind of application and one kind of contracts,
studied in Section~\ref{sec-polytime}.
We need to solve Problem~\ref{high-level-general}. 
The function to optimize is separable (it can be written as a sum of function depending only of one coordinate),
but these functions are not concave in $(N^{1,1},\dots,N^{A,B}) \in \R^{nAB}$. 
However, because each function $s_{l}^{a,b}$ is concave nonincreasing and each $N^{a,b}(t,l)$ is positive, we notice that $\forall a' \in \cro{A}, b' \in \cro{B}$,
the function which sends $N^{a',b'}(t,l)$ to 
$\sum_{a\in \cro{A}}{ \sum_{b \in \cro{B}} {\gamma_{b}N^{a,b}(t,l)s_{l}^{a,b}(N(t,l))}}$ is still concave. Consequently, the function to optimize in Problem~\ref{high-level-general} is $M$-concave in each vector $N^{a,b}\in \mathbb{Z}^{T\times L}$ considered separately,
the other one being fixed.  
This leads to a block descent method, 
in which we use the same scheme as in Algorithm~\ref{algo1}, successively, 
to maximize the objective function
over every vector $N^{a,b}$.
We denote by $f(N^{1,1}, \dots , N^{A,B})$ the objective function of the high-level problem.
We consider for each $a,b$ a vector $N^{a,b} \in \sum_{k \in \mathcal{K}_b} F_k^a$. 
For each couple $(a,b)$ taken successively, we find $(i^{a,b},j^{a,b})$ belonging to:
\[
\underset{{(k,l) \; \text{s.t.} \; N^{a,b} - e_k +e_l \in \sum_{k \in \mathcal{K}_b} F_k^a}}{\arg\max} f(N^{1,1}, \dots, N^{a,b} -e_k+e_l, \dots, N^{A,B})
\]
If $f(N^{1,1} - e_{i^{1,1}} + e_{j^{1,1}}, \dots, N^{A,B} - e_{i^{A,B}} + e_{j^{A,B}}) \leq f(N^{1,1},\dots,N^{A,B})$, 
then the algorithm stops and returns $(N^{1,1},\dots,N^{A,B})$. 
Otherwise, we take for each $a,b$, $N^{a,b} := N^{a,b}-e_{i^{a,b}} + e_{j^{a,b}}$ and begin again. 
Consequently, Algorithm~\ref{algo-bilevel} can be modified to solve the bilevel problem~\ref{high-level} in the general case. 
It leads to Algorithm~\ref{algo-general}.
The function $GraphSearch$, $ShortestPath$ and $ShortestPath2$ are the same as for Algorithm~\ref{algo-bilevel}. 
The function $f^*$ is here defined by:
\[
f^*:(N^{1,1}, \dots, N^{A,B}) \mapsto \sum_t \sum_l \left[ \sum_{a\in \cro{A}}{ \sum_{b \in \cro{B}} {\gamma_{b}N^{a,b}(t,l)s_{l}^{a,b}(N(t,l))}} -M \max(N(t,l)-N^C_l,0) \right]
\]
with $N(t,l) = \sum_{a \in \cro{A}} \sum_{b \in \cro{B}} N^{a,b}(t,l)$. 

\begin{algorithm}
\footnotesize
\caption{Solving the bilevel problem for an arbitrary number of types of contracts.} \label{algo-general} 
\begin{algorithmic}
\REQUIRE $(u_k^a)^* \in F_k^a, \forall a \in \cro{A},  \forall k \in \cro{K}$, $\rho_k^a, \forall a \in \cro{A}, \forall k \in \cro{K}$, $f^*$, $GraphSearch$, $ShortestPath$, $ShortestPath2$, $s \in \cro{n}$
\ENSURE $N^*$ optimal number of customers, $y^*$ optimal discount vector
\FORALL{$a \in \cro{A}, b \in \cro{B}$}
\STATE $N^{a,b} \leftarrow \sum_{k \in \mathcal{K}_b} (u_k^a)^*$
\FORALL{$k \in \mathcal{K}_b$}
\FORALL{$t,t' \notin \mathcal{I}_k^a$}
\STATE $i=(t,L_k(t)) \quad j=(t',L_k(t'))$, 
\IF{$u_k^a(i)=1$ \AND $u_k^a(j)=0$}
\STATE $w_{ij}^{k,a} \leftarrow \rho_k^a(i)-\rho_k^a(j)$
\ENDIF
\ENDFOR
\ENDFOR
\FORALL{$t,t' \in \cro{T}$, $l,l' \in \cro{L}$}
\STATE $i=(t,l) \quad j=(t',l')$
\STATE $w_{ij}^{a,b} \leftarrow \min_{k \in \mathcal{K}_b} w^{k,a}_{ij}$;
\qquad $k_{ij}^{a,b} \in \arg\min_{k \in \mathcal{K}_b} w^{k,a}_{ij}$
\ENDFOR
\ENDFOR
\STATE $stop \leftarrow 0$
\WHILE{$stop = 0$}
\STATE $g_N \leftarrow f^*(N^{1,1},\dots,N^{A,B})$;
\FORALL {$a \in \cro{A}, b \in \cro{B}$}
\STATE $c^{a,b} \leftarrow GraphSearch(w^{a,b})$
\STATE $g^* \leftarrow \max_{u,v \in \cro{T} \times \cro{L}} c^{a,b}_{uv}f^*(N^{1,1},\dots,N^{a,b}-e_u+e_v,\dots,N^{A,B})$; \quad
 $i^{a,b},j^{a,b} \in \arg\max_{u,v \in \cro{T} \times \cro{L}} c^{a,b}_{uv}f^*(N^{1,1},\dots,N^{a,b}-e_u+e_v,\dots,N^{A,B})$
\ENDFOR
\IF{$g^* \leq g_N$}
\STATE $stop \leftarrow 1$
\ELSE
\FORALL{$a \in \cro{A}, b \in \cro{B}$}
\STATE $(v,path) \leftarrow Shortestpath(w^{a,b},i,j)$; 
\qquad $N^{a,b} \leftarrow N^{a,b}-e_{i^{a,b}}+e_{j^{a,b}}$
\FOR{$q=1$ \TO $Length(path)-1$}
\STATE $\alpha \leftarrow path(q)$;
\qquad $\beta \leftarrow path(q+1)$;
\qquad $k \leftarrow k^{a,b}_{\alpha\beta}$;
\qquad $(u_k^a)^*(\alpha)=0$;
\qquad $(u_k^a)^*(\beta)=1$
\FORALL{$\gamma \notin \mathcal{J}_k^a$}
\STATE $w^{k,a}_{\alpha \gamma} \leftarrow +\infty$;
\qquad $w^{k,a}_{\gamma \beta} \leftarrow +\infty$
\IF{$(u_k^a)^*(\gamma)=1$}
\STATE $w^{k,a}_{\gamma \alpha} \leftarrow \rho_k^a(\gamma)-\rho_k^a(\alpha)$
\ELSE
\STATE $w^{k,a}_{\beta \gamma} \leftarrow \rho_k^a(\beta)-\rho_k^a(\gamma)$
\ENDIF
\ENDFOR
\ENDFOR
\FORALL{$t,t' \in \cro{T}, l,l' \in \cro{L}$}
\STATE $i = (t,l)$ \quad $j=(t',l')$
\STATE $w^{a,b}_{ij} \leftarrow \min_{k \in \mathcal{K}_b} w^{k,a}_{ij}$;
\qquad $k^{a,b}_{ij} \in \arg\min_{k \in \mathcal{K}_b} w^{k,a}_{ij}$
\ENDFOR
\ENDFOR
\ENDIF
\ENDWHILE
\STATE $M \leftarrow 1 + ABTL \max_{i,j \in \cro{T} \times \cro{L}, a \in \cro{A}, b\in \cro{B}} w_{ij}^{a,b}$
\FORALL{$t\in \cro{n}$}
\IF{$t \neq s$ AND $w_{st}=+\infty$}
\STATE $w_{st}=M$
\ENDIF
\STATE $y^* \leftarrow Shortestpath2(w,s)$
\STATE $y^*_s \leftarrow 0$
\ENDFOR
\end{algorithmic}
\end{algorithm}

Because the objective function of Problem~\ref{high-level-general} is not $M$-convex in $(N^{1,1},\dots,N^{A,B})$, we have no guarantee of convergence of Algorithm~\ref{algo-general} to a global optimal of the function $f^*$. 
However, we can characterize the nature of the optimum returned by Algorithm~\ref{algo-general}.
In order to estimate the complexity of Algorithm~\ref{algo-general}, we define the function $\Delta f^*$ by:
\begin{align*}
\Delta f^*(& N^{1,1},\dots,N^{A,B}) = - f^*(N^{1,1},\dots,N^{A,B}) \\
+ & \max_{\substack{u^{a,b},v^{a,b} \in \cro{T} \times \cro{L} \\ N^{a,b}-e_{u^{a,b}} + e_{v^{a,b}} \in \sum_{k \in \mathcal{K}_b} F_k^a}} f^*(N^{1,1}-e_{u^{1,1}} +e_{v^{1,1}},\dots,N^{A,B}-e_{u^{A,B}} +e_{v^{A,B}})
\end{align*}

If for each $a,b$ we have $u^{a,b}=v^{a,b}$, then $\Delta f^*(N^{1,1},\dots,N^{A,B}) = 0$. 
Thus, we have $$\Delta f^*(N^{1,1},\dots,N^{A,B}) \geq 0$$. 
Because the set $\prod_{a,b} (\sum_{k \in \mathcal{K}_b} F_k^a)$ is finite, we can define the value $\delta g$ by:
\[
\delta g = \min_{\substack{N^{a,b} \in \sum_{k \in \mathcal{K}_b} F_k^a \\  \Delta f^*(N^{1,1},\dots,N^{A,B}) > 0}} \Delta f^*(N^{1,1},\dots,N^{A,B})
\] 
because $f^*$ has not a constant value. 

\begin{theorem} \label{complexity-algo-general}
Let us define $\gamma_{\max} = \max_{b \in \cro{B}} \gamma_b$. 
Let us also define $R = \sum_{a \in \cro{A}} \sum_{k \in \cro{K}} R_k^a$, for each $a \in \cro{A}$ and $k \in \cro{K}$ $n_k^a = TL - \# \mathcal{J}_k^a$ (that is the number of possible non-zero coordinates of the vectors of $F_k^a)$ and $\overline{R} = \sum_a \sum_k R_k^a(n_k^a-R_k^a)$.
Algorithm~\ref{algo-general} terminates in $O(\frac{\gamma_{\max} R}{\delta g}(AB(TL)^3+\overline{R}))$ time and $O(\overline{R})$ space,
and returns vectors $(y^{a,b})^*_{a \in \cro{A}, b \in \cro{B}}$ and $(N^{a,b})^*_{a \in \cro{A}, b \in \cro{B}}$
such that $\forall a \in \cro{A}, b \in \cro{B}, \; \forall N^{a,b} \in \sum_k{\mathcal{K}_b} F_k^a$ : 
\[
 f^*((N^{1,1})^*,\dots, (N^{a,b})^*,\dots,(N^{A,B})^*) \geq f^*((N^{1,1})^*,\dots, N^{a,b},\dots,(N^{A,B})^*)
\]
\end{theorem}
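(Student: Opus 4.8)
The plan is to prove the two assertions of Theorem~\ref{complexity-algo-general} separately: first the block-wise optimality of the returned point, then the time and space bounds. Throughout I would lean on the per-block $M$-concavity of $f^*$ noted just before the statement and on Murota's local-to-global optimality criterion \cite[Th.\ 6.26, p.148]{murota2003discrete}.

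For correctness, the key is that the loop halts exactly when $\Delta f^*((N^{1,1})^*,\dots,(N^{A,B})^*)=0$. The decisive observation I would make is that the family of joint swaps over which $\Delta f^*$ is maximised contains, for each fixed block $(a,b)$, the swaps that move only that block: taking $u^{a',b'}=v^{a',b'}$ (hence $N^{a',b'}-e_{u}+e_{v}=N^{a',b'}$) for every $(a',b')\neq(a,b)$ is admissible, as already noted in the definition of $\Delta f^*$. Since the maximal joint improvement is zero, in particular no single-block swap $(N^{a,b})^*-e_{u}+e_{v}$ with $(N^{a,b})^*-e_{u}+e_{v}\in\sum_{k\in\mathcal{K}_b}F_k^a$ can increase $f^*$. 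By Theorem~\ref{graphs} applied to the block graph, these feasible unit swaps are exactly those detected by the graph-search routine, so this is a genuine local-optimality condition for $f^*$ seen as a function of $N^{a,b}$ with the other blocks frozen. As recalled above, that restricted function is $M$-concave, so Murota's criterion upgrades local optimality to global optimality over $\sum_{k\in\mathcal{K}_b}F_k^a$, which is precisely the displayed inequality. The recovery of $(y^{a,b})^*$ I would handle as in the one-application case: the vectors $(u_k^a)^*$ are maintained as an optimal decomposition of $N^{a,b}$ at every step (each shortest-path update preserving this by Theorem~\ref{graphs}), and the final shortest-path construction returns an admissible price vector by Theorem~\ref{feasibility-graphs}.

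Next I would bound the number of passes. Every non-terminating iteration has $\Delta f^*>0$ and hence raises $f^*$ by at least $\delta g$, by the very definition of $\delta g$ as the least positive value of $\Delta f^*$ over the finite set $\prod_{a,b}(\sum_{k\in\mathcal{K}_b}F_k^a)$. Since $s_l^{a,b}\le 1$ and the total number of requests is $R=\sum_a\sum_k R_k^a$, the satisfaction part of $f^*$ never exceeds $\gamma_{\max}R$ on the feasible region the iterates reach (where the penalty term vanishes), so the total increase of $f^*$ is $O(\gamma_{\max}R)$ and the loop runs $O(\gamma_{\max}R/\delta g)$ times. For the per-pass cost I would count one graph search on a graph with $TL$ vertices, i.e. $O((TL)^3)$, for each of the $AB$ blocks, giving $O(AB(TL)^3)$ for the search for improving swaps; maintaining the weights $w^{k,a}$ after a swap — updating the decompositions along a shortest path and recomputing the affected minima — costs $O(\overline R)$, where $\overline R=\sum_a\sum_k R_k^a(n_k^a-R_k^a)$ is the number of finite edge weights. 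This yields $O(AB(TL)^3+\overline R)$ per pass, hence the time bound $O\!\big(\tfrac{\gamma_{\max}R}{\delta g}(AB(TL)^3+\overline R)\big)$; since only these $\overline R$ finite weights are stored, the space bound is $O(\overline R)$.

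The one genuine obstacle — and the reason the conclusion is only coordinate-wise optimality rather than a global maximum — is that $f^*$ is \emph{not} jointly $M$-concave in $(N^{1,1},\dots,N^{A,B})$; the coupling through $N(t,l)=\sum_{a,b}N^{a,b}(t,l)$ destroys the exchange property across blocks. The argument must therefore stay strictly within one block at a time, using that freezing the other blocks turns the coupling into an additive constant in each $N(t,l)$ and restores concavity. The price of this restriction is the instance-dependent factor $1/\delta g$, which is not a priori polynomially bounded, so the bound is only pseudo-polynomial in general.
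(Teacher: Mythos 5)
Your proposal is correct and follows essentially the same route as the paper's proof: termination and the iteration bound via the minimal positive increment $\delta g$ against the upper bound $\gamma_{\max}R$ on $f^*$, block-wise local optimality at the stopping condition upgraded to block-wise global optimality by per-block $M$-concavity and Murota's local-to-global criterion, and the same per-iteration accounting of $O(AB(TL)^3+\overline{R})$ operations with $O(\overline{R})$ stored weights. Your added remarks on why only coordinate-wise optimality is obtained and on recovering $(y^{a,b})^*$ via Theorem~\ref{feasibility-graphs} are consistent elaborations of what the paper states around the theorem rather than a different argument.
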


\begin{proof}
Algorithm~\ref{algo-general} continues while the value $g^*$ is strictly larger than $g_N$. 
Because the set $\prod_{a,b} (\sum_{k \in \mathcal{K}_b} F_k^a)$ is finite, the algorithm terminates. 
When it stops, the vector $(N^{a,b})_{a \in \cro{A}, b \in \cro{B}}$ is such that $\forall a \in \cro{A}, b \in \cro{B}, \; \forall u,v \in \cro{T} \times \cro{L}$:
\[
f(N^{1,1},\dots,N^{a,b}-e_u+e_v,\dots,N^{A,B}) \leq f(N^{1,1},\dots,N^{a,b},\dots,N^{A,B}) 
\] 
For each $a,b$, the function $N^{a,b} \mapsto f(N^{1,1},\dots,N^{a,b},\dots,N^{A,B})$ is $M$-concave.
The statement of the theorem comes straightforwardly from the equivalence between local and global optimality for $M$-concave functions.

Algorithm~\ref{algo-general} differs from Algorithm~\ref{algo-bilevel} by the different applications and kind of contracts and by the number of iterations of the loop. 
The set $\cro{K}$ of customers is split following the different kind of contracts $b \in \cro{B}$. 
Thus, we have to define the parameters $w_{ij}^{k,a}$ for each $k \in \cro{K}$ and $a \in \cro{A}$ and the global space complexity becomes $\sum_a \sum_k R_k^a(n_k^a - R_k^a) = \overline{R}$. 
The number of iterations of the loop can be estimated with a pseudo-polynomial bound. 
The algorithm continues while $g^* > g_N$. 
Then, the new value of $g^*$ is $f^*(N^{1,1}-e_{i^{1,1}} +e_{j^{1,1}}, \dots, N^{A,B}-e_{i^{A,B}} +e_{j^{A,B}}$. 
Consequently, at each iteration of the loop, the value of $g^*$ increases of at least $\delta g$ until the algorithm stops. 
The finite values of $f^*$ are nonnegative, and an upper bound is $(\max_{b \in \mathcal{B}} \gamma_b)(\sum_{a}\sum_{k \in \cro{K}} R_k^a) = \gamma_{\max} R$ because each function $s_l$ takes values between $0$ and $1$. 
In each loop, the number of operations is $O(\overline{R} + AB (TL)^3)$ to calculate the new values of $w_{ij}^{a,b}$ and to solve a shortest path problem for each $a$ and $b$ in the graph $G^{a,b}$ with nodes corresponding to all couples in $\cro{T} \times \cro{L}$ and edges with values $w^{a,b}_{ij}$ between vertices $i, j \in \cro{T} \times \cro{L}$. 
\end{proof}

\section{Experimental results} \label{sec-exp}

We consider an application based on real data provided by Orange. 
It involves the data consumptions in an area of $L=43$ cells, during one day divided in time slots of one hour, that is $T=24$ time slots. 
We will focus here our study on price incentives only for download contents.
During this day, a number $K$ of more than $2500$ customers make some requests for downloading data in this area and we are interested in balancing the number of active customers in the network.  
Even though they are insensitive to price incentives, other kind of requests (web, mail, etc.) have to be satisfied and they are
taken into account in the high level optimization problem. 
We consider two classes of users: standard and premium customers. The premium ones demand a better quality of service. Hence, they are less satisfied than the standard customers if they share their cell with  a given number of active customers.  We therefore define the satisfaction function as in Section \ref{sec-model}. The provider wants to favor the premium customers. Hence, we take $\gamma_{b}=2$ for the latter ones and $\gamma_{b}=1$ for the standard customers,
in the high-level optimization problem. We also assume that the premium customers are less sensitive to the incentives, and thus take $\alpha_{k}^{a}=1/2$ for all standard customers and $\alpha_{k}^{a}=1$ for all premium customers in the low-level problem~\ref{lowlevel}.
We estimate very simply the parameters $\rho_{k}$. We take $\rho_{k}(t)=1$ when the customer $k$ consumes download at time $t$ without incentives,
$\rho_{k}(t)=0$ when he does not make any request without incentives but makes a request for download at times $t-1$ or $t+1$ 
(we assume he could shift his consumption of one hour)
and $\rho_{k}(t)=- \infty$ otherwise. 

\begin{figure}[h] 
\begin{center}
\begin{tabular}{cc}
\includegraphics[scale=0.4]{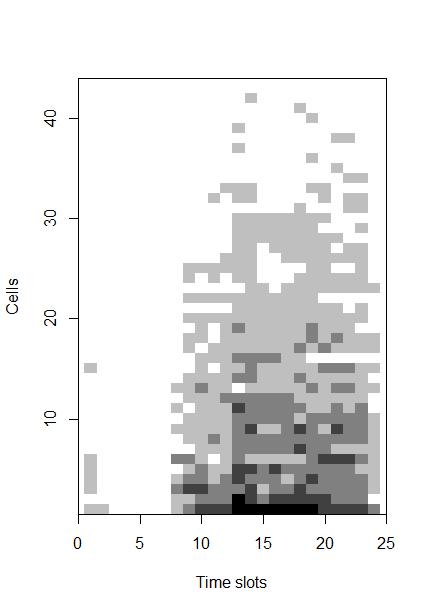}
\includegraphics[scale=0.4]{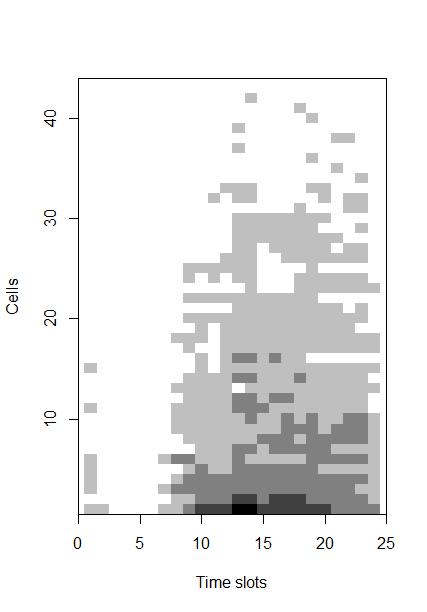}
\end{tabular}
\caption{Satisfaction of premium customers for streaming without (left) and with (right) incentives. The grey level indicates the satisfaction: critical unsatisfaction, $s<0.3$ (black), $0.3<s<0.7$ (dark grey), $0.7<s<0.9$ (grey), $0.9<s<0.99$ (light grey) and complete satisfaction $0.99<s$ (white).}
\label{fig-streaming-premium}
\end{center}
\end{figure}

\begin{figure}[h]
\begin{center}
\begin{tabular}{cc}
\includegraphics[scale=0.4]{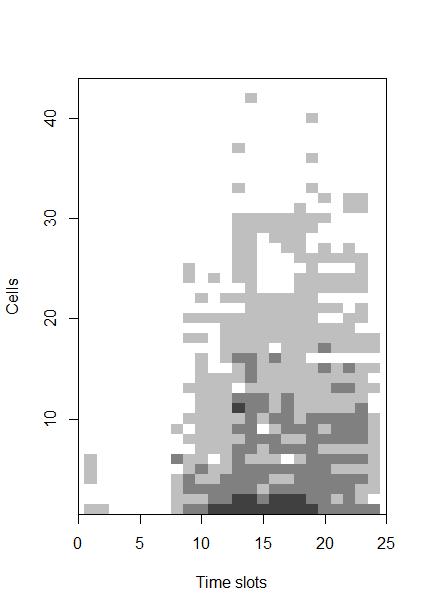}
\includegraphics[scale=0.4]{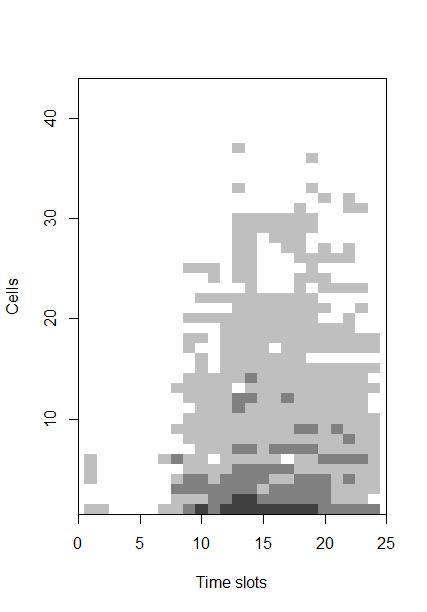}
\end{tabular}
\caption{Satisfaction of standard customers for streaming without (left) and with (right) incentives}
 \label{fig-streaming}
\end{center}
\end{figure}

\begin{figure}[h]
\begin{center} 
\begin{tabular}{cc}
\includegraphics[scale=0.4]{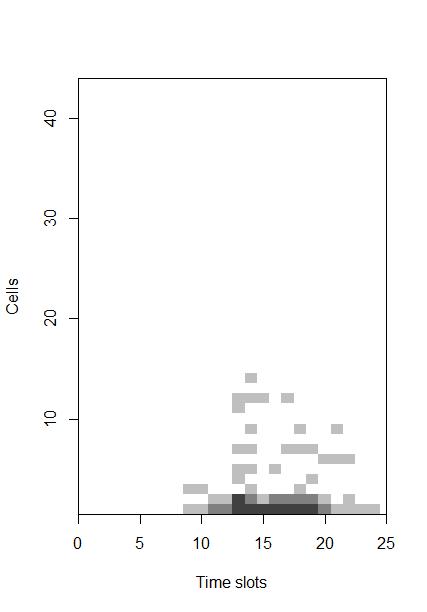}
\includegraphics[scale=0.4]{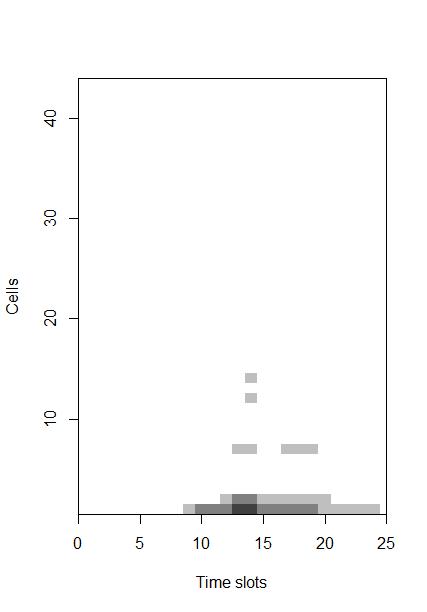}
\end{tabular}
\caption{Satisfaction of premium customers for web, mail or download without (left) and with (right) incentives}
\label{fig-wmd-premium}
\end{center}
\end{figure}

\begin{figure}[h]
\begin{center} 
\begin{tabular}{cc}
\includegraphics[scale=0.4]{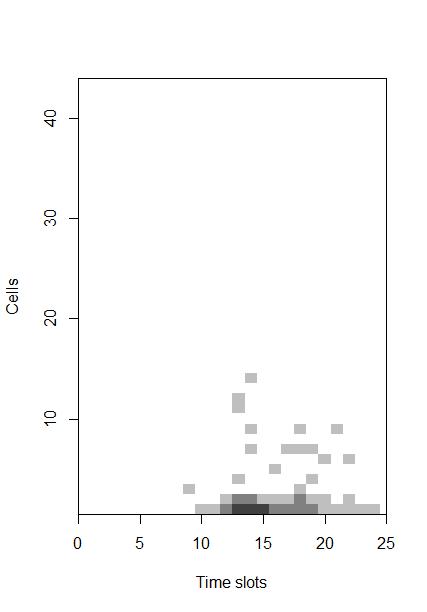}
\includegraphics[scale=0.4]{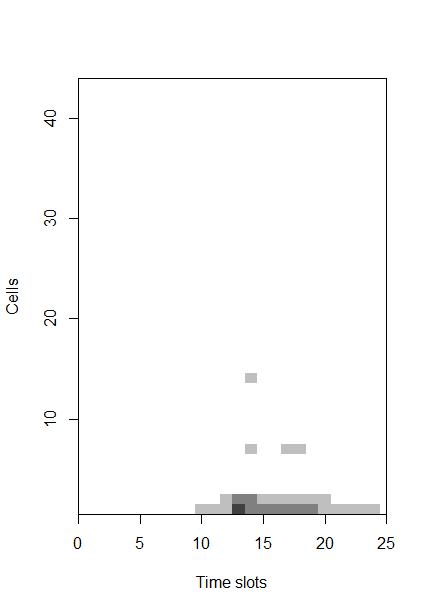}
\end{tabular}
\caption{Satisfaction of standard customers for web, mail or download without (left) and with (right) incentives}
\label{fig-wmd}
\vspace{-.5em}
\end{center}
\end{figure}

\begin{figure}[h]
\begin{center}
\begin{tabular}{cc}
\includegraphics[scale=0.4]{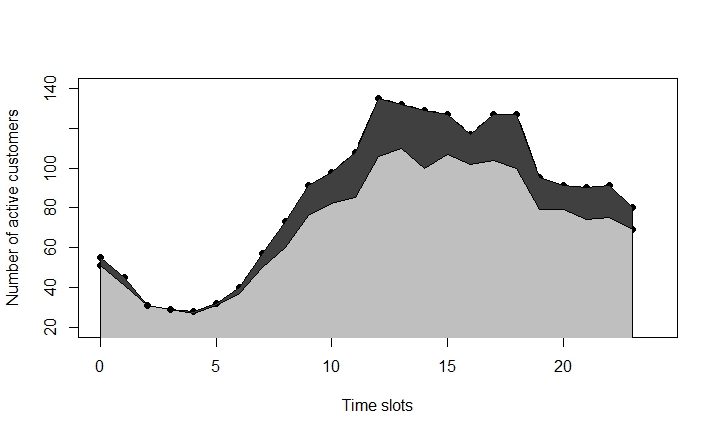}
\includegraphics[scale=0.4]{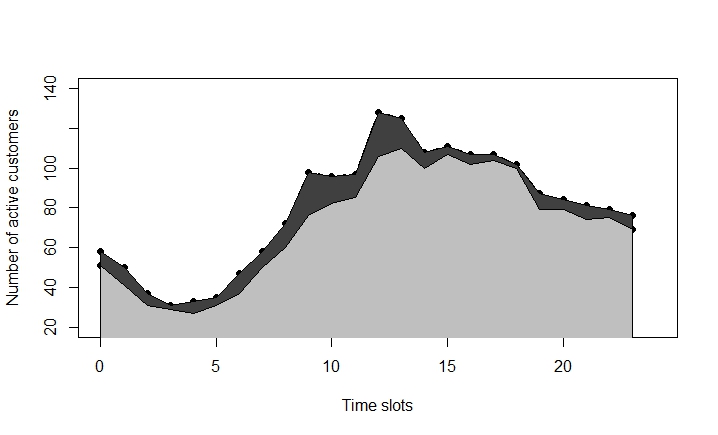}
\end{tabular}
\caption{Traffic in the most loaded cell. The light grey part represents the web, mail and streaming customers who have no incentives and are fixed. The dark grey part corresponds to the download customers in the cell without (left) and with (right) incentives}
 \label{traffic-most-loaded-cell}
\end{center}
\end{figure}

We solve the bilevel problem using Algorithm~\ref{algo-general},
implemented in {\tt Scilab}.
The computation took 9526 seconds on a single core of an Intel i5-4690 processor @ 3.5 GHz.  

On Figures~\ref{fig-streaming-premium}--
\ref{fig-wmd}, we show the evolution of the satisfaction of different kind of customers for different kind of contents without and with incentives. These results show that price incentives have an effective
influence on the load, especially in the most loaded cells (the number of black regions in the space-time coordinates, in which the unsatisfaction of the users is critical, is considerably reduced).
Moreover, 
Figure~\ref{traffic-most-loaded-cell} reveals that the consumption of users is not only moved in time, but also in space: not only some consumption is moved
from the peak hour to the night (off peak), but the surface of the dark grey
region, representing the total download consumption in the cell
over the whole day, is decreased, indicating that some part of the consumption
has been shifted to other cells. 

\section{Conclusion}

We presented here a bilevel model for price incentives in data mobile networks. 
We solved this problem by a decomposition method based on discrete convexity and tropical geometry.
We finally applied our results to real data.
In further work, we shall consider more general models: 
unfixed number of requests, nonlinear preferences of the customers, satisfaction functions of the provider taking into account the profit.
Stochastic models shall also be considered in particular to take into account the partial information of the provider about the customers preferences and trajectories.

\section{Acknowledgments}

We thank the reviewers of our earlier work~\cite{eytard2017bilevel}
for their remarks and comments.
We also thank Orange for providing us real data for our experimental results.

\bibliographystyle{spmpsci}      
\bibliography{Article_EURO}   

%
%

\end{document}